\newcommand{\stirling}[2]{\biggl[\genfrac{}{}{0pt}{}{#1}{#2}\biggr]}
\newtheoremstyle{teoremas}
{12pt}
{13pt}
{\itshape}
{}
{\bfseries}
{}
{.5em}
{}
\theoremstyle{teoremas}
\newtheorem{teo}{Theorem}[section]
\newtheorem{prop}[teo]{Proposition}
\newtheorem{lem}[teo]{Lemma}
\newtheorem{cor}[teo]{Corollary}
\newtheorem{conj}[teo]{Conjecture}
\theoremstyle{definition}
\newtheorem{defi}[teo]{Definition}
\newtheorem{ej}[teo]{Example}
\newtheorem{rem}[teo]{Remark}
\DeclareMathOperator{\ehr}{ehr}
\DeclareMathOperator{\R}{\mathbb{R}}
\newcommand{\Z}{\mathbb{Z}}
\newcommand{\vol}{\operatorname{vol}}
\title{Lattice points in slices of prisms}
\author[L.~Ferroni]{Luis~Ferroni}
\author[D.~McGinnis]{Daniel~McGinnis}
\thanks{Luis~Ferroni was partially supported by the Swedish research council, grant 2018-03968. Daniel~McGinnis was partially supported by the Iowa State University Department of Mathematics through the Lambert Graduate Assistantship}
\address{Department of Mathematics, KTH Royal Institute of Technology, Stockholm, Sweden} 
\email{ferroni@kth.se}
\address{Department of Mathematics, Iowa State University, Ames, Iowa} 
\email{dam1@iastate.edu}
\subjclass[2020]{52B20, 05A15, 13D40, 13A02}
\begin{document}

\begin{abstract}
    We conduct a systematic study of the Ehrhart theory of certain slices of rectangular prisms. Our polytopes are generalizations of the hypersimplex and are contained in the larger class of polypositroids introduced by Lam and Postnikov; moreover, they coincide with polymatroids satisfying the strong exchange property up to an affinity. We give a combinatorial formula for all the Ehrhart coefficients in terms of the number of weighted permutations satisfying certain compatibility properties. This result proves that all these polytopes are Ehrhart positive. Additionally, via an extension of a result by Early and Kim, we give a combinatorial interpretation for all the coefficients of the $h^*$-polynomial. All of our results provide a combinatorial understanding of the Hilbert functions and the $h$-vectors of all algebras of Veronese type, a problem that had remained elusive up to this point. A variety of applications are discussed, including expressions for the volumes of these slices of prisms as weighted combinations of Eulerian numbers; some extensions of Laplace's result on the combinatorial interpretation of the volume of the hypersimplex; a multivariate generalization of the flag Eulerian numbers and refinements; and a short proof of the Ehrhart positivity of the independence polytope of all uniform matroids.
\end{abstract}


\maketitle

\section{Introduction}

\subsection{Overview} In the present article we will focus on the integer point enumeration of certain polytopes that arise as a result of slicing a \emph{rectangular prism}. To be precise, if $\mathbf{c}=(c_1,\ldots,c_n)$ is a vector of positive integers, we define the $n$-dimensional rectangular prism
    \[ \mathscr{R}_{\mathbf{c}} := [0, c_1] \times [0, c_2] \times \cdots \times [0, c_n].\]
This is of course a lattice polytope in $\mathbb{R}^n$. We will be dealing with certain \emph{slices} of rectangular prisms. More precisely, we define for each $k\in \mathbb{Z}_{>0}$ and $\mathbf{c}$ as before:
    \begin{equation}\label{eq:defi-prism}
    \mathscr{R}_{k,\mathbf{c}} := \left\{ x\in \mathscr{R}_{\mathbf{c}} : \sum_{i=1}^n x_i = k\right\}.
    \end{equation}

The problem of enumerating the lattice points lying inside a convex polytope $\mathscr{P}\subseteq \mathbb{R}^n$ with integral vertices is of fundamental importance in several areas within mathematics and has been systematically approached under different perspectives. Whenever a lattice polytope $\mathscr{P}$ is fixed, the function that associates to each positive integer $t$ the number of integral points lying in the dilation $t\mathscr{P}$, namely, 
    \[ t \mapsto \#(t\mathscr{P}\cap \mathbb{Z}^n),\]
happens to be a polynomial of degree $d = \dim\mathscr{P}$. This polynomial, first studied by Ehrhart \cite{ehrhart}, is known as the \emph{Ehrhart polynomial} of the polytope $\mathscr{P}$.

Ehrhart polynomials have proven to be a remarkably useful tool in different problems within discrete geometry and algebraic combinatorics, especially due to their connection with chromatic polynomials of graphs, order polynomials of posets, solutions of certain linear diophantine equations, Hilbert functions in commutative algebra and toric varieties in algebraic geometry.

For some specific classes of polytopes such as regular simplices, hypercubes, cross-polytopes and zonotopes, it is possible to determine explicitly the Ehrhart polynomial and, more specifically, its coefficients. For example, the Ehrhart polynomial of a rectangular prism admits a quite easy expression:
    \begin{equation}\label{eq:ehr_prism}
    \ehr(\mathscr{R}_{\mathbf{c}}, t) = \prod_{i=1}^n (c_it+1).\end{equation}
A classical result in the theory of Ehrhart polynomials is that for every lattice polytope $\mathscr{P}$ of dimension $d$, if we write
    \begin{equation}\label{eq:ehrh-general}\ehr(\mathscr{P}, t) = a_d t^d + a_{d-1} t^{d-1} + \cdots + a_{1}t + a_0,\end{equation}
then some of the coefficients are easy to understand, i.e., $a_0 = 1$, $a_{d} = \vol(\mathscr{P})$ and $a_{d-1} = \frac{1}{2} \vol(\partial\mathscr{P})$; see \cite{beck-robins} for detailed proofs of this fact and further exploration on the connections mentioned above. In particular, from this we see that the Ehrhart polynomial of a polytope as an invariant is a vast generalization of the volume.

It is our belief that the study in this paper will be of interest and relevance in all of the following frameworks:
    \begin{itemize}
        \item Ehrhart positivity.
        \item Refinements and generalizations of the (flag) Eulerian numbers.
        \item Polymatroids satisfying the strong exchange property and polypositroids.
        \item Combinatorial interpretations of $h^*$-polynomials.
        \item Hilbert functions of arbitrary algebras of Veronese type.
    \end{itemize}

In what follows we expand on the relation of our work with each of the above points.

\subsection*{Ehrhart positivity} 

By looking at equation~\eqref{eq:ehrh-general} we see that the coefficients of the terms of degree $d$, $d-1$ and $0$ are positive for every lattice polytope. However, the remaining coefficients can be negative in general. Although there are some general formulas for computing them \cite{mcmullen}, they are quite intricate. One of the major questions in the study of Ehrhart theory is to determine classes of polytopes having the property that \emph{all} of the coefficients of $\ehr(\mathscr{P}, t)$ are positive; such polytopes are said to be \emph{Ehrhart positive}. See \cite{liu} for a thorough exposition regarding positivity of Ehrhart polynomials.

\begin{teo}\label{thm:ehrhart-positivity}
    All slices of prisms $\mathscr{R}_{k,\mathbf{c}}$ are Ehrhart positive.
\end{teo}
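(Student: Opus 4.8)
The plan is to reduce Theorem~\ref{thm:ehrhart-positivity} to a \emph{cancellation-free} closed formula for the Ehrhart polynomial and then to read off positivity coefficient by coefficient. Concretely, I would aim to establish an identity of the shape
\[
    \ehr(\mathscr{R}_{k,\mathbf{c}}, t) = \sum_{w} P_w(t),
\]
where $w$ ranges over a finite family of combinatorial objects --- weighted permutations subject to compatibility constraints dictated by the box-widths $\mathbf{c}$ and the slicing level $k$ --- and each $P_w(t)$ is a polynomial all of whose coefficients are non-negative. Extracting the coefficient of $t^j$ then expresses $a_j$ as a sum of non-negative integers, i.e.\ as the number of weighted permutations carrying a prescribed statistic equal to $j$; since a non-empty count is strictly positive, this yields the theorem. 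The extremal cases $a_0 = 1$ and $a_{n-1} = \vol(\mathscr{R}_{k,\mathbf{c}}) > 0$ are already known, so the real content is to produce the formula and to check that each intermediate count is non-empty.

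To produce it I would begin from the explicit lattice-point enumeration
\[
    \ehr(\mathscr{R}_{k,\mathbf{c}}, t) = \#\left\{ y \in \mathbb{Z}_{\ge 0}^n : \sum_{i=1}^n y_i = kt,\ y_i \le c_i t \ (1 \le i \le n) \right\},
\]
equivalently the coefficient of $z^{kt}$ in $\prod_{i=1}^n (1 + z + \cdots + z^{c_i t})$. Inclusion--exclusion over the coordinates that violate their upper bound rewrites this as the signed sum $\sum_{S \subseteq [n]} (-1)^{|S|} \binom{kt - \sum_{i \in S}(c_i t + 1) + n - 1}{n-1}$, which is a polynomial in $t$ of degree $n-1$ but displays no sign pattern. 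The crux is to reorganize this alternating expression into the positive form above. I expect this to go through an expansion in the Eulerian basis: when all $c_i$ coincide one recovers the classical identification of the hypersimplex volume with Eulerian numbers, and for general $\mathbf{c}$ one should obtain a \emph{weighted} Eulerian expansion in which the number of descents of a permutation $w \in S_n$ is constrained by a compatibility condition relating those descents to the widths $c_i$. Refining this passage from the leading term (the volume, a weighted combination of Eulerian numbers) to the whole polynomial is exactly what manufactures the polynomials $P_w(t)$, each a product of factors linear in $t$ with positive coefficients.

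The main obstacle is precisely this conversion from the signed inclusion--exclusion sum to a manifestly positive combinatorial sum. I would attack it, for instance, by constructing a sign-reversing involution on the terms surviving in the inclusion--exclusion, arranged so that the fixed points are in bijection with the weighted permutations $w$ and carry the non-negative polynomials $P_w(t)$. Controlling such an involution \emph{uniformly} in $\mathbf{c}$ --- not merely in the equal-width hypersimplex case --- is the delicate point, because the bounds $c_i t + 1$ interact with the slicing level $kt$ in a way that genuinely depends on $\mathbf{c}$. A secondary point, needed because Ehrhart positivity demands strict positivity of every $a_j$, is to verify that in the nondegenerate range $0 < k < c_1 + \cdots + c_n$ each coefficient's count is non-empty; this should follow from the same model, since a weighted permutation realizing the top-degree term can be systematically modified to contribute to every lower-degree coefficient.
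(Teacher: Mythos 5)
Your overall architecture matches the paper's: you start from the generating-function/inclusion--exclusion identity (your signed sum over $S\subseteq[n]$ is, after grouping subsets by size $j$ and by $\sum_{i\in S}c_i=v$, exactly Theorem~\ref{thm:first-formula}), and you aim to re-expand it in an Eulerian basis indexed by weighted permutations subject to $\mathbf{c}$-dependent compatibility constraints, which is exactly the shape of Theorem~\ref{main}. The problem is that the bridge between these two formulas --- which is the entire mathematical content of the theorem --- is not supplied: you name a hoped-for technique (a sign-reversing involution on the surviving inclusion--exclusion terms, ``uniform in $\mathbf{c}$'') and you yourself flag it as the delicate point, but you do not define the objects, the weight statistic, the compatibility condition, or the involution. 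The paper's bridge is concrete and is built differently: it first defines $\mathbf{c}$-compatible weighted permutations and proves a closed \emph{alternating} counting formula for $W(\ell,n,m+1,\mathbf{c})$ (Theorem~\ref{thm:technical}), by an inclusion--exclusion over the cycles whose weight exceeds their capacity $\sum_{i\in\mathfrak{c}}c_i$; it then extracts $[t^m]$ from Theorem~\ref{thm:first-formula} (Lemma~\ref{lem:coeff-of-binomial}, producing factors $(k-v)^m P^{n-1-m}_{-j+1,n-1-j}$) and uses the Eulerian generating-function identity $\sum_{v\geq 0}v^m x^v=(1-x)^{-(m+1)}\sum_i A(m,i)x^{i+1}$ to recognize, inside that extraction, precisely the alternating expression of Theorem~\ref{thm:technical}. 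In other words, the cancellation is never organized by an involution on the Ehrhart terms themselves; the positive combinatorial quantity is counted separately and then \emph{matched} against the signed formula by generating-function algebra.

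A secondary structural issue: the identity shape you propose, $\ehr(\mathscr{R}_{k,\mathbf{c}},t)=\sum_w P_w(t)$ with a single family of objects $w$ each contributing a polynomial with nonnegative coefficients, is stronger than (and different from) what the paper establishes. In Theorem~\ref{main} each degree $m$ has its \emph{own} indexing family --- $\mathbf{c}$-compatible weighted permutations with exactly $m+1$ cycles --- weighted by Eulerian numbers $A(m,k-\ell-1)$ that also depend on $m$; there is no common set of objects contributing positive polynomials across all degrees. So an involution engineered to produce your proposed decomposition would be aiming at a statement the paper does not prove and which may not hold in that form. Finally, your closing remark about strict positivity (nonvanishing of each coefficient in the range $0<k<c_1+\cdots+c_n$) is a genuine point, but it is settled not by ``modifying a top-degree witness'' but by exhibiting, for each $m$, some $\ell$ with both $W(\ell,n,m+1,\mathbf{c})>0$ and $A(m,k-\ell-1)>0$, e.g.\ $\ell=\max(0,k-m)$, using that the total weight capacity $\sum_i c_i-(m+1)$ is at least $k-m$ in this range.
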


Sometimes we informally refer to the polytopes $\mathscr{R}_{k,\mathbf{c}}$ as ``thin'' slices of prisms. This is because if we fix $\mathbf{c}=(c_1,\ldots,c_n)$ and two nonnegative integers $a < b$, by defining
    \begin{equation}\label{eq:fat-slice} \mathscr{R}'_{a,b,\mathbf{c}} := \left\{ x\in \mathscr{R}_{\mathbf{c}} : a \leq \sum_{i=1}^n x_i \leq b\right\},\end{equation} 
we obtain another type of slice of a prism. We usually refer to them as ``fat'' slices of a prism. The difference between these two types of slices will not be of much relevance; we will see explicitly how it is possible to transform one into the other while preserving the Ehrhart polynomials. In particular, we will also prove that fat slices of prisms are Ehrhart positive.

In fact, the Ehrhart positivity will be deduced via a combinatorial formula for each of the coefficients that reveals positivity. To formulate this statement, we introduce the notion of ``$\mathbf{c}$-compatible weighted permutations'' (defined in Section~\ref{CompatibilitySection}), where $\mathbf{c}=(c_1,\ldots,c_n)$ is a vector of positive integers. We define
    \[ W(\ell,n,m,\mathbf{c}) := \# \left\{\text{$\mathbf{c}$-compatible weighted $\sigma\in\mathfrak{S}_n$ with $m$ cycles and weight $\ell$}\right\}.\]

\begin{teo}\label{main}
    Let $\mathbf{c}=(c_1,\dots,c_n)\in \mathbb{Z}_{>0}^n$ and $0 < k < c_1+\cdots+c_n$. For each $0\leq m\leq n-1$, the coefficient of $t^m$ in $\ehr(\mathscr{R}_{k,\mathbf{c}},t)$ is given by
    \[
    [t^m]\ehr(\mathscr{R}_{k,\mathbf{c}},t) = \frac{1}{(n-1)!}\sum_{\ell = 0}^{k-1}W(\ell,n,m+1,\mathbf{c})A(m,k-\ell-1).
    \]
    In particular Theorem~\ref{thm:ehrhart-positivity} holds.
\end{teo}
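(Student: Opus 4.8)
The plan is to compute the Ehrhart polynomial explicitly by inclusion--exclusion, extract the coefficient of $t^m$, and then recognize the resulting expression through a generating function in the slicing parameter $k$; the Eulerian numbers will appear from a standard power-sum identity, while the weighted-permutation counts $W$ will be isolated into a single $k$-free combinatorial identity.

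First I would count lattice points directly. Since $t\mathscr{R}_{k,\mathbf{c}} = \{y \in \R^n : 0 \le y_i \le c_i t,\ \sum_i y_i = kt\}$, the value $\ehr(\mathscr{R}_{k,\mathbf{c}},t)$ is the coefficient of $z^{kt}$ in $\prod_{i=1}^n \frac{1 - z^{c_it+1}}{1-z}$. Expanding the numerator over subsets $S \subseteq [n]$ and writing $c_S = \sum_{i\in S} c_i$ yields $\sum_S (-1)^{|S|}\binom{(k-c_S)t - |S| + n - 1}{n-1}$. The one subtlety is that a binomial coefficient, read as a polynomial in $t$, agrees with the genuine nonnegative-integer count only when its argument is eventually nonnegative; comparing behaviour as $t\to\infty$ shows that this happens precisely for the subsets with $c_S<k$ (those with $c_S\ge k$ contribute nothing to the count), so that, as polynomials in $t$,
\[
\ehr(\mathscr{R}_{k,\mathbf{c}},t) = \sum_{S \colon c_S < k} (-1)^{|S|}\binom{(k-c_S)t + n - 1 - |S|}{n-1}.
\]

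Next I would extract $[t^m]$. Writing $\binom{(k-c_S)t + n-1-|S|}{n-1} = \frac{1}{(n-1)!}\prod_{u=1-|S|}^{n-1-|S|}\bigl((k-c_S)t + u\bigr)$ and reading off the degree-$m$ term produces a factor $(k-c_S)^m$ times the elementary symmetric function $e_{n-1-m}(1-|S|, \dots, n-1-|S|)$; when $|S| = 0$ this is exactly $\stirling{n}{m+1}$ via $\prod_{i=1}^{n-1}(t+i) = \sum_m \stirling{n}{m+1}\, t^m$, and for general $|S|$ it is a shift thereof, which accounts for the $m+1$ cycles recorded by $W$. Thus
\[
[t^m]\ehr(\mathscr{R}_{k,\mathbf{c}},t) = \frac{1}{(n-1)!}\sum_{S \colon c_S < k} (-1)^{|S|} (k-c_S)^m\, e_{n-1-m}(1-|S|,\dots,n-1-|S|).
\]
The Eulerian numbers then enter through a generating function in $k$: summing this identity against $x^k$, interchanging the order of summation, and using $\sum_{r\ge 1} r^m x^r = \frac{x A_m(x)}{(1-x)^{m+1}}$ with $A_m(x) = \sum_j A(m,j) x^j$ the Eulerian polynomial, I obtain
\[
\sum_{k} \Bigl([t^m]\ehr(\mathscr{R}_{k,\mathbf{c}},t)\Bigr) x^k = \frac{x A_m(x)}{(n-1)!\,(1-x)^{m+1}} \sum_{S \subseteq [n]} (-1)^{|S|}\, e_{n-1-m}(1-|S|,\dots,n-1-|S|)\, x^{c_S}.
\]
On the other hand, the claimed formula says exactly that the coefficient of $t^m$ is the convolution of the weight sequence with the Eulerian sequence, i.e.\ that this generating function equals $\frac{1}{(n-1)!}\bigl(\sum_\ell W(\ell,n,m+1,\mathbf{c})x^\ell\bigr)\, x A_m(x)$. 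Cancelling the common factor $\frac{x A_m(x)}{(n-1)!}$ reduces the whole theorem to the purely combinatorial, $k$-free identity
\[
\sum_{\ell} W(\ell,n,m+1,\mathbf{c})\, x^\ell = \frac{1}{(1-x)^{m+1}} \sum_{S \subseteq [n]} (-1)^{|S|}\, e_{n-1-m}(1-|S|,\dots,n-1-|S|)\, x^{c_S}.
\]

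The main obstacle is precisely this last identity, which is where the definition of $\mathbf{c}$-compatible weighted permutations from Section~\ref{CompatibilitySection} must be used in full: one has to match the alternating subset sum --- inclusion--exclusion over the box constraints $y_i \le c_i t$, refined by the cycle (Stirling) data in $e_{n-1-m}$ and by the weight distribution coming from $(1-x)^{-(m+1)}$ --- with the count of weighted permutations having $m+1$ cycles and weight $\ell$. I expect this to follow either from a bijective decomposition in the spirit of Early and Kim, or from a sign-reversing involution disentangling the cycle statistic from the descent statistic; the case $n=1$, where the identity reads $\sum_\ell W(\ell,1,1,\mathbf{c})x^\ell = \frac{1-x^{c_1}}{1-x}$, serves as a consistency check. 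Once it is established, extracting $[x^k]$ recovers the stated formula, and the Ehrhart positivity of Theorem~\ref{thm:ehrhart-positivity} is immediate, since every $W(\ell,n,m+1,\mathbf{c})$ and every Eulerian number $A(m,k-\ell-1)$ is nonnegative.
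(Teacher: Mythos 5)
There is a genuine gap, and it sits exactly where you flagged it. Your reduction is correct and in fact retraces the paper's own route step for step: your alternating formula for $\ehr(\mathscr{R}_{k,\mathbf{c}},t)$ is Theorem~\ref{thm:first-formula} (your sum over subsets $S$ with $c_S<k$ is the paper's sum against $\rho_{\mathbf{c},j}(v)$), your coefficient extraction is Lemma~\ref{lem:coeff-of-binomial} (your $e_{n-1-m}(1-|S|,\dots,n-1-|S|)$ is the paper's $P^{n-1-m}_{-j+1,n-1-j}$), and the Eulerian generating function plus convolution is the computation carried out in Section~\ref{sec:4}. But the ``purely combinatorial, $k$-free identity'' you reduce everything to, namely
\[
\sum_{\ell} W(\ell,n,m+1,\mathbf{c})\, x^\ell \;=\; \frac{1}{(1-x)^{m+1}} \sum_{S \subseteq [n]} (-1)^{|S|}\, e_{n-1-m}\bigl(1-|S|,\dots,n-1-|S|\bigr)\, x^{c_S},
\]
is, after extracting $[x^\ell]$, precisely Theorem~\ref{thm:technical} --- which the paper explicitly describes as the most technical part of the whole article. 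You do not prove it; you only express the expectation that a bijection or a sign-reversing involution will do the job, supported by the $n=1$ check. So what you have actually established is that Theorem~\ref{main} is \emph{equivalent} to Theorem~\ref{thm:technical}; the combinatorial heart of the theorem, where the definition of $\mathbf{c}$-compatibility must do real work, is missing. (A minor additional point: when you sum against $x^k$ over all $k$, the left-hand side must be read as the inclusion--exclusion formula rather than as $[t^m]\ehr(\mathscr{R}_{k,\mathbf{c}},t)$ itself, since for $k\geq c_1+\cdots+c_n$ the polytope degenerates and the formula no longer computes an Ehrhart coefficient; this is harmless but should be said.)

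For the record, here is what fills the gap in the paper, so you can see it is not routine. One expands $e_{n-1-m}(1-j,\dots,n-1-j)$ via \eqref{eq:P-stirling} as an alternating convolution of Stirling numbers of the first kind, turning the right-hand coefficient into
\[
\sum_{j=0}^{n}\sum_{u=0}^{j}\sum_{i=0}^{\ell}\ \sum_{\substack{A\in\binom{[n]}{j}\\ \sum_{a\in A}c_a=i}} (-1)^{j-u}\,\stirling{j}{j-u}\stirling{n-j}{m+1+u-j}\binom{m+\ell-i}{m}.
\]
Each summand is then interpreted as counting weighted permutations with $m+1$ cycles and total weight $\ell$ in which exactly $j-u$ cycles partition the set $A$, the remaining cycles partition $[n]\smallsetminus A$, every cycle $\mathfrak{c}$ inside $A$ is ``bad'' in the sense that $w(\mathfrak{c})\geq \sum_{h\in\mathfrak{c}}c_h$, and the binomial factor distributes the leftover weight $\ell-i$ among the $m+1$ cycles. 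A weighted permutation whose bad cycles are $\mathfrak{c}_1,\dots,\mathfrak{c}_b$ then occurs in exactly the terms indexed by subsets $B\subseteq[b]$, taking $A=\bigcup_{s\in B}\mathfrak{c}_s$, with sign $(-1)^{|B|}$; summing $\sum_{B\subseteq[b]}(-1)^{|B|}=0$ kills every non-compatible permutation, while each $\mathbf{c}$-compatible one is counted once (from $A=\varnothing$). Your instinct that a sign-reversing/inclusion--exclusion argument is the right tool is correct, but until this argument (or an equivalent one) is actually carried out, the proposal does not prove the theorem.
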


Here $A(n,k)$ stands for the Eulerian numbers, i.e., the number of permutations $\sigma\in\mathfrak{S}_n$ having exactly $k$ descents.

Part of the intuition on how to properly define  the notion of weights and compatibility comes from \cite{ferroni1} and \cite{hanelyetal}. In the first of these papers, a rather easy particular case of the above description yields the Ehrhart positivity of $\Delta_{k,n} = \mathscr{R}_{k,(1,\ldots,1)}$, and the role of $W(\ell,n,m+1,\mathbf{c})$ in Theorem~\ref{main} is played by the ``weighted Lah numbers'' $W(\ell,n,m+1)$. That explains much of the terminology and the notation that we use in the present paper. On the other hand, in \cite{hanelyetal} a different proof for the Ehrhart positivity of hypersimplices is outlined, the main advantage being that it does not rely on generating function identities but only on an inclusion-exclusion argument which is applied to a family of cleverly defined sets. Recently, the ideas used in the Ehrhart positivity results of these papers have also been extended and adapted in \cite{mcginnis2023combinatorial} to establish the Ehrhart positivity of different classes of polytopes.

\subsection*{Flag Eulerian numbers}

Notice that \emph{a~priori} it is not clear what the volume of a slice of a prism will look like. A famous result attributed in \cite{stanley-eulerian} to Laplace states that if we slice the $n$-dimensional unit cube, which is just $\mathscr{R}_{\mathbf{c}}$ for $\mathbf{c} = (1,\ldots,1)\in \mathbb{Z}^n$, with the hyperplane $\sum_{i=1}^{n} x_i= k$, the normalized volume of the resulting polytope is precisely the Eulerian number $A(n-1,k-1)$, that is, the number of permutations $\sigma\in \mathfrak{S}_{n-1}$ that have exactly $k-1$ descents. In the literature, the set of points in the $n$-dimensional unit cube having sum of coordinates equal to $k$ is known as the \emph{hypersimplex} $\Delta_{k,n}$. 

We introduce the following generalization of the notion of flag Eulerian numbers in \cite{han-josuat}.

\begin{defi}
    The flag Eulerian number $A_{n,k}^{(\mathbf{c})}$ is defined as the number of $\mathbf{c}$-colored permutations having $k$ flag descents.
\end{defi}

(The notions of $\mathbf{c}$-colored permutations and flag descents will be explained later, in Section~\ref{sec:6}.) In analogy to the case of the classical Eulerian numbers, we will realize these numbers as volumes of polytopes.

\begin{teo}
    Let $\mathbf{c}=(c_1,\dots, c_{n})\in \mathbb{Z}^n_{>0}$, and let $\mathbf{c}'=(c_1,\dots,c_{n},1)$. The normalized volume of $\mathscr{R}_{k+1,\mathbf{c}'}$ equals the flag Eulerian number $A_{n,k}^{(\mathbf{c})}$. 
\end{teo}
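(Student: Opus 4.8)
The plan is to identify the normalized volume of the $n$-dimensional polytope $\mathscr{R}_{k+1,\mathbf{c}'}$ with the leading coefficient of its Ehrhart polynomial, extract that coefficient from Theorem~\ref{main}, and then match the resulting expression with the flag Eulerian number combinatorially. Recall that for a lattice polytope $\mathscr{P}$ of dimension $d$ the normalized volume equals $d!\,[t^d]\ehr(\mathscr{P},t)$. Since $\mathscr{R}_{k+1,\mathbf{c}'}$ is $n$-dimensional, applying Theorem~\ref{main} with the substitutions $n\mapsto n+1$, $k\mapsto k+1$, $\mathbf{c}\mapsto\mathbf{c}'$ and taking $m=n$ yields
\[
n!\,[t^n]\ehr(\mathscr{R}_{k+1,\mathbf{c}'},t)=\sum_{\ell=0}^{k}W(\ell,n+1,n+1,\mathbf{c}')\,A(n,k-\ell).
\]
Thus it suffices to prove that the right-hand side equals $A_{n,k}^{(\mathbf{c})}$.

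Next I would unwind the term $W(\ell,n+1,n+1,\mathbf{c}')$. A permutation of $n+1$ elements having the maximal number $n+1$ of cycles is forced to be the identity, so this quantity counts only the $\mathbf{c}'$-compatible weightings of the identity of total weight $\ell$. Because the last entry of $\mathbf{c}'$ equals $1$, that coordinate can carry no nontrivial weight, and I expect $W(\ell,n+1,n+1,\mathbf{c}')$ to reduce to the number of vectors $\epsilon=(\epsilon_1,\dots,\epsilon_n)$ with $0\le \epsilon_i\le c_i-1$ and $\sum_i\epsilon_i=\ell$; that is, the number of color assignments to $[n]$ bounded by $\mathbf{c}$ whose total color is $\ell$. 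Verifying that the compatibility condition of Section~\ref{CompatibilitySection}, specialized to the identity permutation, collapses to these box constraints is the first technical point to settle.

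With that identification in hand, the claim becomes the convolution
\[
A_{n,k}^{(\mathbf{c})}=\sum_{\ell=0}^{k}\bigl(\#\{\epsilon:\textstyle\sum_i\epsilon_i=\ell,\ 0\le\epsilon_i\le c_i-1\}\bigr)\,A(n,k-\ell),
\]
and here I would appeal to the definition of flag descents from Section~\ref{sec:6}. The combinatorial fact I need is that the flag-descent statistic of a $\mathbf{c}$-colored permutation $(\sigma,\epsilon)$ decomposes additively as the number of ordinary descents of $\sigma$ plus the total color $\sum_i\epsilon_i$. Granting this, a $\mathbf{c}$-colored permutation with $k$ flag descents is exactly a pair consisting of a color vector $\epsilon$ of weight $\ell$ together with a permutation $\sigma\in\mathfrak{S}_n$ with $k-\ell$ descents, for some $0\le\ell\le k$; this is precisely the summand on the right, and summing over $\ell$ finishes the argument.

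The main obstacle will be these two definitional matchings rather than any hard estimate: first, that $W(\ell,n+1,n+1,\mathbf{c}')$ genuinely counts color vectors of weight $\ell$ (which hinges on the exact form of compatibility for the identity permutation), and second, that flag descents split as descents plus color weight with precisely the index shifts appearing above. As a sanity check and an independent route, I would record the geometric fact that $\mathscr{R}_{k+1,\mathbf{c}'}$ is unimodularly equivalent, via the projection forgetting the last coordinate (which is determined by $x_{n+1}=(k+1)-\sum_{i=1}^n x_i$), to the fat slice $\mathscr{R}'_{k,k+1,\mathbf{c}}=\{x\in\mathscr{R}_{\mathbf{c}}:k\le\sum_i x_i\le k+1\}$. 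Summing the normalized volumes of these consecutive fat slices over all $k$ recovers the normalized volume $n!\,c_1\cdots c_n$ of the whole prism, which equals the total number $n!\prod_i c_i$ of $\mathbf{c}$-colored permutations, confirming that $\sum_k A_{n,k}^{(\mathbf{c})}$ has the expected value and lending confidence to the convolution above.
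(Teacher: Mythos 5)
Your reduction is set up correctly, and the first two-thirds of your plan is sound: the normalized volume is $n!\,[t^n]\ehr(\mathscr{R}_{k+1,\mathbf{c}'},t)$; Theorem~\ref{main} applied with $n\mapsto n+1$, $k\mapsto k+1$, $\mathbf{c}\mapsto\mathbf{c}'$, $m=n$ gives the stated sum; and $W(\ell,n+1,n+1,\mathbf{c}')=B(\ell,\mathbf{c})$ does hold, since the only permutation of $[n+1]$ with $n+1$ cycles is the identity, each singleton cycle $\{i\}$ must carry weight at most $c_i-1$, and the last coordinate is forced to carry weight $0$ because $c_{n+1}=1$. (This is exactly how the paper proves Corollary~\ref{coromain}.) The gap is in the final combinatorial step. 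The fact you need --- that $\operatorname{fdes}(\sigma,\mathbf{s})=\operatorname{des}(\sigma)+\sum_i s_i$ --- is false. By definition, $\operatorname{fdes}(\sigma,\mathbf{s})=s_n+\sum_{i\in\operatorname{Des}(\sigma,\mathbf{s})}c_{i+1}$, where $\operatorname{Des}(\sigma,\mathbf{s})$ is the \emph{colored} descent set (colors compared first, then letters), not the ordinary descent set of $\sigma$, and descents are weighted by $c_{i+1}$, not by $1$. Concretely, take $n=2$, $\mathbf{c}=(2,2)$, $\sigma=12$, $\mathbf{s}=(1,0)$: then $\operatorname{Des}(\sigma,\mathbf{s})=\{1\}$, so $\operatorname{fdes}=s_2+c_2=2$, whereas $\operatorname{des}(\sigma)+s_1+s_2=0+1=1$. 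So the obvious correspondence $(\sigma,\mathbf{s})\leftrightarrow(\mathbf{s},\sigma)$ does not match the statistics; what is true is only that $\operatorname{fdes}$ and $\operatorname{des}+|\mathbf{s}|$ are \emph{equidistributed} over $\mathfrak{S}_n^{(\mathbf{c})}$, and that equidistribution is precisely the convolution identity you are trying to prove --- i.e., given Corollary~\ref{coromain}, it is equivalent to the theorem itself.

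This is not a small repair. The paper derives the convolution $A_{n,k}^{(\mathbf{c})}=\sum_{\ell}B(\ell,\mathbf{c})A(n,k-\ell)$ as a \emph{corollary} of this theorem, going in the opposite direction from you, and explicitly remarks that the authors know of no direct combinatorial derivation of it. The paper's proof of the theorem is instead geometric: it introduces the half-open regions $\mathcal{F}_{n,k}^{(\mathbf{c})}=\{v\in\mathscr{R}_{\mathbf{c}}:k\le\operatorname{fdes}_{\mathbf{c}}(v)<k+1\}$ and $\mathcal{A}_{n,k}^{(\mathbf{c})}=\{v\in\mathscr{R}_{\mathbf{c}}:k\le\sum_i v_i<k+1\}$, exhibits a piecewise-affine, measure-preserving map $\varphi$ (with $b_i=a_i-a_{i-1}$, shifted by $c_i$ when $a_{i-1}>a_i$) carrying one onto the other, shows $\vol(\mathcal{F}_{n,k}^{(\mathbf{c})})=\frac{1}{n!}A_{n,k}^{(\mathbf{c})}$ by decomposing according to colored permutations, and finally identifies $\mathcal{A}_{n,k}^{(\mathbf{c})}$ with $\mathscr{R}_{k+1,\mathbf{c}'}$ up to a measure-zero set via Proposition~\ref{prop:fat-thin}. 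To complete your argument you would need either such a volume-preserving transfer, or a genuinely new bijection of $\mathfrak{S}_n^{(\mathbf{c})}$ carrying $\operatorname{fdes}$ to $\operatorname{des}+|\mathbf{s}|$; the identity map is not one. Your closing sanity check only verifies the total count $n!\prod_i c_i$ and cannot detect this error.
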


In particular, this extends some results by Han and Josuat-Verg\`es \cite{han-josuat} which were valid for vectors $\mathbf{c}$ of the form $(r,\ldots,r)$. Since the volume is also captured as the leading coefficient of the Ehrhart polynomial, as a consequence of Theorem~\ref{main}, we can conclude a handy formula for the volume of $\mathscr{R}_{k,\mathbf{c}}$, and thus for the flag Eulerian numbers.

\begin{cor}\label{coromain}
    Let $\mathbf{c}=(c_1,\dots,c_n)\in \mathbb{Z}_{>0}^n$. Then the volume of $\mathscr{R}_{k,\mathbf{c}}$ is given by
        \[ \vol(\mathscr{R}_{k,\mathbf{c}}) = \frac{1}{(n-1)!}\sum_{\ell = 0}^{k-1} B(\ell,\mathbf{c})A(n-1,k-\ell-1)\]
    where $B(\ell,\mathbf{c})$ is defined as the number of ways of placing $\ell$ indistinguishable balls into $n$ boxes of capacities $c_1-1, \ldots, c_n-1$ respectively.
\end{cor}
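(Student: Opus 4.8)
The plan is to read off the volume as the leading coefficient of the Ehrhart polynomial and then specialize the combinatorial formula of Theorem~\ref{main}. Since $0 < k < c_1 + \cdots + c_n$, the slice $\mathscr{R}_{k,\mathbf{c}}$ is full-dimensional inside the hyperplane $\{\sum_i x_i = k\}$, so $\dim \mathscr{R}_{k,\mathbf{c}} = n-1$. As recalled after equation~\eqref{eq:ehrh-general}, the leading coefficient of the Ehrhart polynomial is the (relative) volume, so $\vol(\mathscr{R}_{k,\mathbf{c}}) = [t^{n-1}]\ehr(\mathscr{R}_{k,\mathbf{c}}, t)$.

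Setting $m = n-1$ in Theorem~\ref{main} gives
\[
[t^{n-1}]\ehr(\mathscr{R}_{k,\mathbf{c}},t) = \frac{1}{(n-1)!}\sum_{\ell=0}^{k-1} W(\ell, n, n, \mathbf{c})\, A(n-1, k-\ell-1).
\]
The Eulerian factor already matches the one in the claimed formula, so the proof reduces to the single identity $W(\ell, n, n, \mathbf{c}) = B(\ell, \mathbf{c})$ for every $0 \le \ell \le k-1$.

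To establish this, I would first observe that $W(\ell, n, n, \mathbf{c})$ counts $\mathbf{c}$-compatible weighted permutations $\sigma \in \mathfrak{S}_n$ having exactly $n$ cycles and weight $\ell$. A permutation of $\{1, \dots, n\}$ with $n$ cycles is necessarily the identity, every element being a fixed point; hence only the identity contributes, and the count reduces to enumerating the admissible weightings of the identity permutation of total weight $\ell$. Returning to the definition of $\mathbf{c}$-compatibility in Section~\ref{CompatibilitySection}, the plan is to show that for the identity the compatibility conditions decouple across the $n$ fixed points: the weight attached to the cycle $\{i\}$ may be chosen freely in $\{0, 1, \dots, c_i - 1\}$, with no interaction between distinct fixed points. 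Granting this, a $\mathbf{c}$-compatible weighting of total weight $\ell$ is exactly a tuple $(w_1, \dots, w_n)$ with $0 \le w_i \le c_i - 1$ and $w_1 + \cdots + w_n = \ell$, and the number of such tuples is by definition $B(\ell, \mathbf{c})$.

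The main obstacle is this last verification: confirming that, when $\sigma$ is the identity, the compatibility constraints from Section~\ref{CompatibilitySection} genuinely reduce to the independent per-coordinate bounds $0 \le w_i \le c_i - 1$. This requires unwinding the notion of weight and compatibility at a single-element cycle and checking that whatever constraints tie together different parts of a common cycle become vacuous once every cycle is a singleton. Once this local simplification is in place, the equality $W(\ell, n, n, \mathbf{c}) = B(\ell, \mathbf{c})$ is immediate, and substituting it into the displayed specialization of Theorem~\ref{main} yields the formula of Corollary~\ref{coromain}.
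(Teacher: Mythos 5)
Your proposal is correct and follows essentially the same route as the paper's own proof: set $m=n-1$ in Theorem~\ref{main}, observe that the identity is the only permutation in $\mathfrak{S}_n$ with $n$ cycles, and identify its $\mathbf{c}$-compatible weightings of total weight $\ell$ with placements of $\ell$ indistinguishable balls into boxes of capacities $c_1-1,\ldots,c_n-1$. The one step you flag as the ``main obstacle'' is in fact immediate from the definitions in Section~\ref{CompatibilitySection}: a weight is by definition a function on the set of cycles, and compatibility imposes the single inequality $w(\mathfrak{c}) < \sum_{i\in\mathfrak{c}} c_i$ separately on each cycle, so for the identity permutation these conditions are precisely the independent per-coordinate bounds $0\le w_i\le c_i-1$ with no further interaction to rule out.
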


In particular, when combining the preceding two results we obtain a combinatorial identity relating the flag Eulerian numbers with the classical Eulerian numbers.

\begin{cor}
    The flag Eulerian number $A_{n,k}^{(\mathbf{c})}$ is given by
    \[A_{n,k}^{(\mathbf{c})} = \sum_{\ell = 0}^{k} B(\ell, \mathbf{c}) A(n, k-\ell)\]
    where $B(\ell,\mathbf{c})$ is defined as in Corollary \ref{coromain}.
\end{cor}

\subsection*{Polymatroids and polypositroids}

The study of the volume of the hypersimplex has been a \emph{leit motiv} for several important developments in combinatorics. For instance, the hypersimplex $\Delta_{k,n}$ admits a regular unimodular triangulation as was proved by Stanley in \cite{stanley-eulerian} (see also \cite{sturmfels}). Generalizing such triangulations, in \cite{lam-postnikov} Lam and Postnikov introduced the notion of alcoved polytope, a family that essentially contains all hypersimplices and, more importantly, all the polytopes of the form $\mathscr{R}_{k,\mathbf{c}}$. They proved that all alcoved polytopes possess a regular unimodular triangulation and therefore their volume can be recovered by counting the number of simplices of the triangulation, see \cite[Theorem 3.2]{lam-postnikov}.

The edges of a polytope of the form $\mathscr{R}_{k,\mathbf{c}}$ are parallel to some vector of the form $e_i-e_j$, hence they are particular instances of generalized permutohedra or polymatroids. Actually, in the influential paper \cite{herzog-hibi} these polytopes are coined ``polymatroids of Veronese type''. The main result of Herzog, Hibi and Vladoiu in \cite{herzog-hibi-vladoiu} characterizes them (up to an affinity) as the polymatroids satisfying the strong exchange property.

More recently, Lam and Postnikov studied in \cite{lam-postnikov-polypositroids} the class of \emph{polypositroids}, the family of all polytopes that are simultaneously polymatroids and alcoved polytopes. The polytopes $\mathscr{R}_{k,\mathbf{c}}$ are polypositroids, hence an additional combinatorial toolbox is at disposal for their study. In the past two decades, the Ehrhart theory of the hypersimplex and other alcoved polytopes and polymatroids has been matter of intensive research, motivated in part due to the conjectures posed by De~Loera, Haws and K\"oppe in \cite{deloera-haws-koppe}. In \cite{postnikov} Postnikov proved the Ehrhart positivity of certain polymatroids arising as Minkowski sums of simplices. In \cite{ferroni1} Ferroni gave a combinatorial formula for the Ehrhart coefficients of the hypersimplex $\Delta_{k,n}$ (see also \cite{hanelyetal}). In \cite{castillo-liu,castillo-liu2} Castillo and Liu proved the positivity of the Ehrhart coefficients of high degree and the linear term for arbitrary polymatroids; the latter was also proved in \cite{jochemko-ravichandran}. Although arbitrary polymatroids can have negative Ehrhart coefficients \cite{ferroni3}, it is conjectured in \cite{fjs} that positroids are indeed Ehrhart positive. We remark that it is not true that general alcoved polytopes are Ehrhart positive, as in fact the class of order polytopes fails to be Ehrhart positive as was shown by Stanley (see \cite{liu-tsuchiya}).

\subsection*{Combinatorial interpretations for \texorpdfstring{$h^*$}{h*}-polynomial-vectors}
The study of the $h^*$-polynomial (which encodes the same information as the usual Ehrhart polynomial) of alcoved polytopes is an intriguing and very active area of research, see for instance recent work \cite{fjs,sinn-sjoberg}. The $h^*$-polynomial of $\Delta_{k,n}$ exhibits remarkable combinatorial properties, and has been described by Li \cite{li} and by Early \cite{early} and Kim \cite{kim} using different approaches. This problem had also been studied by Katzman in \cite{katzman}, where the problem of giving a combinatorial interpretation for this remained widely open. 

Due to Stanley's result \cite{stanley-hstar} that shows that the coefficients of the $h^*$-polynomial of a polytope always are nonnegative integers, it is highly desirable to provide combinatorial interpretations for these numbers. The second main result of this paper consists of such a combinatorial interpretation for the polytopes $\mathscr{R}_{k,\mathbf{c}}$. We will generalize Kim's result to all slices of prisms.

\begin{teo}\label{main-hstar}
    Let $\mathbf{c}=(c_1,\ldots,c_n)\in\mathbb{Z}_{>0}^n$ and $0<k<c_1+\cdots+c_n$. For each $0\leq i\leq n$, the coefficient $[x^i]h^*(\mathscr{R}_{k,\mathbf{c}},x)$ equals the number of $\mathbf{c}$-compatible decorated ordered set partitions of type $(k,n)$ having winding number $i$.
\end{teo}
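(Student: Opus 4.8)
The plan is to generalize Kim's computation of the $h^*$-polynomial of the hypersimplex to slices of prisms by exhibiting a unimodular triangulation of $\mathscr{R}_{k,\mathbf{c}}$ whose shelling is governed precisely by the winding-number statistic. Since every alcoved polytope admits a regular unimodular triangulation (Lam--Postnikov) and $\mathscr{R}_{k,\mathbf{c}}$ is alcoved, one knows \emph{a~priori} that $h^*(\mathscr{R}_{k,\mathbf{c}},x)$ coincides with the $h$-polynomial of any such triangulation; the content of the theorem is to read this $h$-polynomial off a natural combinatorial model. Concretely, writing $d=n-1=\dim\mathscr{R}_{k,\mathbf{c}}$ (so $d+1=n$), I would use the identity
\[
\sum_{t\ge 0}\ehr(\mathscr{R}_{k,\mathbf{c}},t)\,x^{t}=\frac{h^*(\mathscr{R}_{k,\mathbf{c}},x)}{(1-x)^{\,n}},
\]
and prove that the numerator equals $\sum_{D} x^{\operatorname{wind}(D)}$, the sum running over all $\mathbf{c}$-compatible decorated ordered set partitions $D$ of type $(k,n)$.

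First I would set up the combinatorial model. Each maximal simplex of the alcoved triangulation, together with its half-open contribution in a fixed shelling order, is to be encoded by a $\mathbf{c}$-compatible decorated ordered set partition of type $(k,n)$: the underlying ordered set partition records the combinatorial type of the simplex (the chain of lattice points cut out by the alcove walls $x_i+\cdots+x_j\in\mathbb{Z}$), the decoration records the half-open facets, and the $\mathbf{c}$-compatibility encodes exactly the box constraints $0\le x_i\le c_i$ that distinguish $\mathscr{R}_{k,\mathbf{c}}$ from the plain hypersimplex. This step generalizes the indexing of Early and Kim for $\Delta_{k,n}=\mathscr{R}_{k,(1,\dots,1)}$, where compatibility is vacuous; I would check that the assignment $(\text{simplex, half-open data})\mapsto D$ is a well-defined bijection onto the set of $\mathbf{c}$-compatible objects.

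The heart of the argument is to match statistics: I would show that the shelling restriction of a simplex---equivalently, the number of half-open facets in the chosen shelling---equals the winding number $\operatorname{wind}(D)$ of the associated decorated ordered set partition. For the hypersimplex this is exactly the Early--Kim result, so the task reduces to tracking how the extra lattice points created by capacities $c_i>1$ contribute. The cleanest route is to define the winding number intrinsically as the number of integer thresholds crossed by the cyclic sequence of partial sums attached to $D$, and then to verify that crossing a threshold corresponds bijectively to passing a half-open wall, with $\mathbf{c}$-compatibility guaranteeing that the relevant partial sums remain within the allowed multiplicity ranges. Summing the half-open contributions $x^{\operatorname{wind}(D)}/(1-x)^{n}$ over all maximal simplices then yields the displayed generating function, and extracting $[x^i]$ gives the theorem.

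The main obstacle I anticipate is precisely this statistic-matching step in the presence of nontrivial capacities. In the hypersimplex case the winding number and the shelling degree are linked through a clean cyclic symmetry of $\Delta_{k,n}$, but the box constraints break that symmetry, so I cannot invoke Early--Kim verbatim. I would instead isolate the role of the capacities by splitting a coordinate direction of capacity $c_i$ into $c_i$ unit directions: this embeds $\mathscr{R}_{k,\mathbf{c}}$ into the larger hypersimplex $\Delta_{k,N}$ with $N=c_1+\cdots+c_n$ via a lattice projection, under which the $\mathbf{c}$-compatible decorated ordered set partitions should appear as precisely the objects that descend along the projection. Transporting Kim's winding number along this map while keeping careful control of the half-open boundary conventions, so that the shelling degree is preserved, is the delicate point; once it is in place, the identification $[x^i]h^*(\mathscr{R}_{k,\mathbf{c}},x)=\#\{\,\mathbf{c}\text{-compatible }D:\operatorname{wind}(D)=i\,\}$ follows.
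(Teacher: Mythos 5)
Your framework---reading $h^*(\mathscr{R}_{k,\mathbf{c}},x)$ off a shelling of a regular unimodular triangulation---is legitimate in principle and genuinely different from the paper's proof, which is triangulation-free: the paper follows Kim's purely enumerative scheme, namely an inclusion--exclusion over ``bad blocks'' via $H_{\mathbf{c}}(T)=\sum_{S\in\Pi(T)}(-1)^{|S|}|K_{\mathbf{c}}(S)|$ (Proposition~\ref{kim2.11}, Lemma~\ref{CorrespondenceLem}, Proposition~\ref{chi-sum}), a bijective reduction to counting lattice points in a box (Proposition~\ref{kim2.22}), and a generating-function computation that feeds the explicit Ehrhart formula of Theorem~\ref{thm:first-formula} and Lemma~\ref{lemma:gen-binomial} into the definition of $h^*$. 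The problem is that your proposal leaves both of its load-bearing steps unproved, and one of them rests on a false premise. The statistic-matching step---``shelling restriction number equals winding number''---is \emph{not} ``exactly the Early--Kim result'' in the hypersimplex case. Early's conjecture was proved by Kim precisely by the inclusion--exclusion argument that the paper generalizes, not by a shelling; the shelling-based computation of $h^*(\Delta_{k,n},x)$ that does exist in the literature (Li) produces a different statistic (descents of permutations with fixed excedance number), and no argument in the cited works identifies shelling degrees with winding numbers. So the base case you plan to bootstrap from is itself an unproved claim that you would have to establish from scratch, and your proposal gives no mechanism for doing so.

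The fallback route fails more concretely. Splitting a coordinate of capacity $c_i$ into $c_i$ unit coordinates gives a surjective lattice projection $\pi:\Delta_{k,N}\twoheadrightarrow\mathscr{R}_{k,\mathbf{c}}$ with $N=c_1+\cdots+c_n$, not an embedding of $\mathscr{R}_{k,\mathbf{c}}$ into $\Delta_{k,N}$. This map is many-to-one on lattice points (the fiber of $\pi$ over a lattice point of $t\mathscr{R}_{k,\mathbf{c}}$ is a product of dilated simplices with many lattice points), so it preserves neither Ehrhart polynomials nor $h^*$-vectors, and a unimodular triangulation of $\Delta_{k,N}$ does not descend to a triangulation of $\mathscr{R}_{k,\mathbf{c}}$ at all, let alone compatibly with half-open decompositions. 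Consequently ``transporting Kim's winding number along this map'' has no content as stated: the objects upstairs overcount the objects downstairs by the fiber volumes, and no bookkeeping of boundary conventions can repair a map that is not injective on the data being counted. If you want to complete a proof along your lines you must construct, for $\mathscr{R}_{k,\mathbf{c}}$ itself, an explicit shelling of its alcoved triangulation together with a bijection from half-open simplices to $\mathbf{c}$-compatible decorated ordered set partitions matching restriction number with winding number; alternatively, the efficient repair is the paper's route, where $\mathbf{c}$-compatibility enters only through the counts $\rho_{\mathbf{c},j}(v)$ and the $\mathbf{c}$-packed sequence analysis, and the final identity is checked against $[x^d]\bigl((1-x)^n\sum_{j\ge 0}\ehr(\mathscr{R}_{k,\mathbf{c}},j)x^j\bigr)$ rather than against any geometric decomposition.
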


Again, we defer to Section~\ref{sec:5} the definitions and the terminology for the above statement. On the other hand, since the sum of the entries of the $h^*$-vector of a polytope equals the normalized volume, then each of the above coefficients provides a refinement for the flag Eulerian numbers.

\subsection*{Hilbert functions and algebras of Veronese type}

Our results admit an interesting interpretation from the perspective of commutative algebra. Let us fix a field $\mathbb{F}$, a vector $\mathbf{c}=(c_1,\ldots,c_n)\in\mathbb{Z}^n_{>0}$ and a positive integer $k<c_1+\cdots+c_n$. Consider $\mathscr{V}(\mathbf{c},k)$, the graded subalgebra of $\mathbb{F}[x_1,\ldots,x_n]$ generated by all the monomials $x_1^{\alpha_1}\cdots x_n^{\alpha_n}$ where $\alpha_1+\cdots+\alpha_n=k$ and $\alpha_i\leq c_i$ for each $1\leq i\leq n$. We say that the algebra $\mathscr{V}(\mathbf{c},k)$ is of \emph{Veronese type}. There is considerable literature on this topic, including \cite{bruns}, \cite{denegri-hibi}, \cite{katzman} and \cite{herzog-zhu}. 

Since $\mathscr{A}:=\mathscr{V}(\mathbf{c},k)$ is isomorphic to the Ehrhart ring of $\mathscr{R}_{k,\mathbf{c}}$ we can relate the Ehrhart polynomial with the \emph{Hilbert function}, i.e.,
    \[ \ehr(\mathscr{R}_{k,\mathbf{c}}, m) = \dim_{\mathbb{F}}(\mathscr{A}^m),\]
where $\mathscr{A}^m$ is the graded component of degree $m$ of $\mathscr{A}$.

\begin{cor}
    Let $\mathscr{A}$ be an arbitrary algebra of Veronese type. Then the Hilbert function of $\mathscr{A}$ is a polynomial with positive coefficients.
\end{cor}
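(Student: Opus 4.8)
The plan is to deduce this corollary directly from Theorem~\ref{thm:ehrhart-positivity} by passing through the standard dictionary between the Hilbert function of an Ehrhart ring and the Ehrhart polynomial of the underlying polytope. Concretely, writing $\mathscr{A}=\mathscr{V}(\mathbf{c},k)$ with $0<k<c_1+\cdots+c_n$, I would use the isomorphism between $\mathscr{A}$ and the Ehrhart ring of $\mathscr{R}_{k,\mathbf{c}}$ recorded just above the statement to identify, for every $m\geq 0$,
\[
\dim_{\mathbb{F}}(\mathscr{A}^m)=\#\bigl(m\,\mathscr{R}_{k,\mathbf{c}}\cap\mathbb{Z}^n\bigr)=\ehr(\mathscr{R}_{k,\mathbf{c}},m).
\]
Once this identity is in place, the Hilbert function of $\mathscr{A}$ is literally the Ehrhart polynomial of $\mathscr{R}_{k,\mathbf{c}}$, and the desired positivity is exactly Theorem~\ref{thm:ehrhart-positivity}.

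I would then spell out the key steps in order. First, record the degreewise identification: the graded piece $\mathscr{A}^m$ is spanned by the monomials $x^\beta$ with $\beta$ a sum of $m$ lattice points of $\mathscr{R}_{k,\mathbf{c}}$, and the normality (integer decomposition property) of these alcoved polytopes---guaranteed by the regular unimodular triangulation of Lam and Postnikov invoked in the introduction---ensures that these $\beta$ are precisely the lattice points of $m\,\mathscr{R}_{k,\mathbf{c}}$. Hence $\dim_{\mathbb{F}}(\mathscr{A}^m)=\ehr(\mathscr{R}_{k,\mathbf{c}},m)$ holds for \emph{all} $m\geq 0$, not merely for $m\gg 0$. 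Second, observe that the parameters of any algebra of Veronese type satisfy $0<k<c_1+\cdots+c_n$, so the hypotheses of Theorem~\ref{main} are met and $\mathscr{R}_{k,\mathbf{c}}$ is $(n-1)$-dimensional; thus Theorem~\ref{main} describes every coefficient $[t^m]\ehr(\mathscr{R}_{k,\mathbf{c}},t)$ for $0\leq m\leq n-1$. Third, invoke the positivity: each coefficient is the nonnegative combination $\tfrac{1}{(n-1)!}\sum_{\ell}W(\ell,n,m+1,\mathbf{c})A(m,k-\ell-1)$, which is strictly positive. Combining the three steps, the Hilbert function of $\mathscr{A}$ is a polynomial all of whose coefficients are positive.

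The genuinely substantive content is already carried by Theorem~\ref{main}, so the present statement is a formal consequence and there is no serious obstacle. The one point deserving care is the passage from ``eventually polynomial'' to ``polynomial agreeing with the Ehrhart polynomial in every degree'': a priori the Hilbert function of a graded algebra is only eventually polynomial, and it is exactly the normality of $\mathscr{R}_{k,\mathbf{c}}$---equivalently, the fact that $\mathscr{A}$ is isomorphic to the \emph{Ehrhart ring}, rather than merely to some algebra with the same Hilbert polynomial---that upgrades this to agreement for all $m\geq 0$, including the low-degree terms. Since this isomorphism is already established, I would simply make the degreewise agreement explicit and then read off positivity from Theorem~\ref{main}.
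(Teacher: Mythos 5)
Your proposal is correct and follows essentially the same route as the paper: identify the Hilbert function of $\mathscr{V}(\mathbf{c},k)$ with $\ehr(\mathscr{R}_{k,\mathbf{c}},m)$ via the isomorphism with the Ehrhart ring, then read off positivity from Theorem~\ref{main}. The only difference is one of detail: you justify the degreewise agreement through normality (IDP) coming from the Lam--Postnikov unimodular triangulation, whereas the paper delegates this to the De~Negri--Hibi isomorphism it cites; this is a careful and welcome elaboration, not a different argument.
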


In addition to the positivity of the coefficients, of course we get the combinatorial interpretation provided by Theorem~\ref{main}. On the other hand, we also obtain an interpretation for the coefficients of their $h$-vectors, i.e., the numerators of the Hilbert series.

\begin{cor}\label{coro:veronese}
    The $i$-th entry of the $h$-vector of the algebra of Veronese type $\mathscr{V}(\mathbf{c},k)$ over the field $\mathbb{F}$ counts the number of $\mathbf{c}$-compatible decorated ordered partitions of type $(k,n)$ and winding number $i$.
\end{cor}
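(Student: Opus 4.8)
The plan is to deduce the statement from Theorem~\ref{main-hstar} by recognizing that the $h$-vector of $\mathscr{A} := \mathscr{V}(\mathbf{c},k)$ is literally the $h^*$-vector of the polytope $\mathscr{R}_{k,\mathbf{c}}$. Recall that the $h$-vector $(h_0, h_1, \ldots)$ of the graded algebra $\mathscr{A}$ is read off from the numerator of its Hilbert series. Setting $d := \dim \mathscr{R}_{k,\mathbf{c}} = n-1$ (this uses the hypothesis $0 < k < c_1 + \cdots + c_n$, which guarantees the slice is a genuine $(n-1)$-dimensional polytope in the hyperplane $\sum_i x_i = k$), so that the Ehrhart ring has Krull dimension $d+1 = n$, we have
\[
\sum_{m \geq 0} \dim_{\mathbb{F}}(\mathscr{A}^m)\, x^m = \frac{\sum_{i} h_i x^i}{(1-x)^{n}}.
\]

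I would then invoke the two facts already recorded in the excerpt: that $\mathscr{A}$ is isomorphic as a graded algebra to the Ehrhart ring of $\mathscr{R}_{k,\mathbf{c}}$, and that under this isomorphism $\dim_{\mathbb{F}}(\mathscr{A}^m) = \ehr(\mathscr{R}_{k,\mathbf{c}}, m)$ for every $m \geq 0$. Substituting the latter identity into the display above turns the left-hand side into the Ehrhart series $\sum_{m \geq 0} \ehr(\mathscr{R}_{k,\mathbf{c}}, m)\, x^m$, whose defining rational expression is $h^*(\mathscr{R}_{k,\mathbf{c}}, x)/(1-x)^{n}$. Comparing the two numerators over the common denominator $(1-x)^n$ forces $\sum_i h_i x^i = h^*(\mathscr{R}_{k,\mathbf{c}}, x)$; that is, the $h$-vector of $\mathscr{A}$ coincides entrywise with the $h^*$-vector of $\mathscr{R}_{k,\mathbf{c}}$. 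With this identification secured, the corollary follows at once from Theorem~\ref{main-hstar}: the coefficient $[x^i]h^*(\mathscr{R}_{k,\mathbf{c}}, x)$ counts the $\mathbf{c}$-compatible decorated ordered set partitions of type $(k,n)$ with winding number $i$, hence so does $h_i$.

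The identification of the $h$-vector with the $h^*$-vector is the standard dictionary between Ehrhart theory and commutative algebra, so the only point demanding genuine care — and the modest obstacle here — is the bookkeeping that ensures both Hilbert-series expressions carry the \emph{same} denominator $(1-x)^n$, i.e.\ that the Krull dimension of the Ehrhart ring equals $\dim \mathscr{R}_{k,\mathbf{c}} + 1 = n$. Once the dimension count is confirmed, the comparison of numerators is legitimate and no residual analysis is needed. I would also note that the resulting count is independent of the field $\mathbb{F}$, since the quantity in Theorem~\ref{main-hstar} makes no reference to $\mathbb{F}$; this is consistent with, and in fact explains, the purely combinatorial nature of the answer.
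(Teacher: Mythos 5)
Your proposal is correct and takes essentially the same approach as the paper: the paper likewise deduces the corollary from Theorem~\ref{main-hstar} by invoking the De Negri--Hibi isomorphism between $\mathscr{V}(\mathbf{c},k)$ and the Ehrhart ring of $\mathscr{R}_{k,\mathbf{c}}$, under which the $h$-vector is the $h^*$-vector of the polytope. Your explicit verification that both series carry the common denominator $(1-x)^n$ (Krull dimension $=\dim\mathscr{R}_{k,\mathbf{c}}+1=n$) simply fills in bookkeeping that the paper leaves implicit.
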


\subsection{Outline}

The paper is structured as follows. In Section~\ref{sec:2} we describe the basics of the Ehrhart theory of both ``thin'' and ``fat'' slices of prisms and provide a first explicit formula for the Ehrhart polynomial. Although this formula, stated in Theorem~\ref{thm:first-formula} does not reveal the positivity of the coefficients, it happens to be interesting on its own. In Section~\ref{sec:3} we introduce the notion of ``$\mathbf{c}$-compatible weighted permutations'' following \cite{ferroni1} and \cite{hanelyetal}, which allows us to define the numbers $W(\ell,n,m+1,\mathbf{c})$ appearing in Theorem~\ref{main}. The definition is somewhat involved, and the enumeration of such objects, achieved in Theorem~\ref{thm:technical} is arguably the most technical part of the paper. In Section~\ref{sec:4} we state and prove Theorem~\ref{main}. In Section~\ref{sec:5} we approach the $h^*$-polynomials of $\mathscr{R}_{k,\mathbf{c}}$ inspired by \cite{early} and \cite{kim}; the main difficulty with this section is coming up with the right definitions, but once this is done, Kim's techniques can be applied in a rather direct manner.  Finally, in Section~\ref{sec:6} we discuss some applications, in particular we prove Corollary \ref{coromain} and discuss Corollary \ref{coro:veronese}; we extend results by Han and Josuat-Verg\`es \cite{han-josuat} by combining our results and a generalized version of their notion of flag Eulerian number; finally, also in Section~\ref{sec:6}, we give a short proof of the Ehrhart positivity of the independence polytope of all uniform matroids.

\section{Slices of prisms}\label{sec:2}

The main goal in this section is to provide a handy expression for the Ehrhart polynomial of the slices of prisms. This is done in Theorem~\ref{thm:first-formula}. The main drawback is that we cannot see the coefficients directly and we cannot even be sure that they are positive. Nevertheless, this concrete expression will allow us to unveil a nice factorization in Section~\ref{sec:4} that provides us with the combinatorial formula of Theorem~\ref{main}.

\subsection{Thin and fat slices}

By definition, the polytope $\mathscr{R}_{k,\mathbf{c}}$ introduced in equation \eqref{eq:defi-prism} is the intersection between an $n$-dimensional rectangular prism and a hyperplane. It is an $(n-1)$-dimensional polytope, unless $k = 0$ or $k = c_1+\cdots+c_n$: in these two cases it collapses to a point; also, it will be certainly empty if $k > c_1+\cdots+c_n$.

\begin{ej}
    Consider the vector $\mathbf{c}=(6,3,4)$ and take $k = 7$. The polytope $\mathscr{R}_{k,\mathbf{c}}$ is obtained as the intersection of a rectangular prism in $\mathbb{R}^3$ and the hyperplane $x+y+z=7$, as depicted on the left in Figure \ref{fig:1}. We can see that it corresponds to a pentagon in $\mathbb{R}^3$ with vertices  $(6,0,1)$, $(6,1,0)$, $(4,3,0)$, $(0,3,4)$ and $(3,0,4)$, shown on the right of Figure \ref{fig:1}. Furthermore, notice that the combinatorics of this polytope is genuinely different to that of all hypersimplices having dimension two.
\begin{figure}\centering
    \begin{tikzpicture}[scale=0.63]
    \draw[black][blue,top color=blue, bottom color=blue, fill opacity=.25] (3,0,0) -- (3,0,6) -- (3,4,6) -- (3,4,0) -- cycle;
    \draw[black][blue,top color=blue, bottom color=blue, fill opacity=.25] (0,4,0) -- (3,4,0) -- (3,4,6) -- (0,4,6) -- cycle;
    \draw[black][blue,top color=blue, bottom color=blue, fill opacity=.25] (0,0,6) -- (0,4,6) -- (3,4,6) -- (3,0,6) -- cycle;
    
    \filldraw [black] (0,0,0) circle (.75pt);
    \filldraw [black] (4,0,0) circle (0pt) node[below] {$y$};
    \filldraw [black] (0,5,0) circle (0pt) node[left] {$z$};
    \filldraw [black] (0,0,7) circle (0pt) node[below] {$x$};
    \draw[densely dotted, ->] (0,0,0) -- (4,0,0);
    \draw (3,0,6) -- (0,0,6);
    \draw[densely dotted,->] (0,0,0) -- (0,0,7);
    \draw (3,4,0) -- (3,4,6);
    \draw (3,4,6) -- (0,4,6);
    \draw (0,4,6) -- (0,4,0);
    \draw[densely dotted,->] (0,0,0) -- (0,5,0);
    \draw (3,0,0) -- (3,4,0);
    \draw (3,0,6) -- (3,4,6);
    \draw[black] (3,0,6) -- (3,0,0);
    \draw[black] (0,4,6) -- (0,0,6);
    \draw[black] (3,4,0) -- (0,4,0);
    
    \fill[green,top color=green, bottom color=green, fill opacity=0.1] (0,4,3) -- (3,4,0) -- (0,7,0) -- cycle; 
    
    \filldraw [black] (4,4,-1) circle (0pt) node[right] {$x+y+z=7$};
    
    \draw[blue] (0,4,3) -- (3,4,0);
    \draw[blue] (0,4,3) -- (0,1,6);
    \draw[blue] (0,1,6) -- (1,0,6);
    \draw[blue] (1,0,6) -- (3,0,4);
    \draw[blue] (3,0,4) -- (3,4,0);
    \fill[green,top color=green, bottom color=green, fill opacity=0.1] 
    (0,0,7)--(4,-4,7)--(4,3,0)--(3,4,0)--(3,0,4)--(1,0,6)--(0,1,6);
    \fill[green,top color=green, bottom color=green, fill opacity=0.1] (4,3,0) -- (4,4,-1) -- (-1,9,-1) -- (-1,8,0);
    \scoped[on background layer]
    \fill[green,top color=green, bottom color=green, fill opacity=0.1] (0,0,7) -- (-1,1,7) -- (-1,8,0) -- (0,7,0); 
    
    \foreach \i in {1,2,3}
    {
    \filldraw [black] (\i,0,0) circle (.75pt);
    }
    
    \foreach \i in {1,2,3,4}
    {
    \filldraw [black] (0,\i,0) circle (.75pt);
    }
    
    \foreach \i in {1,2,3,4,5,6}
    {
    \filldraw [black] (0,0,\i) circle (.75pt);
    }

    \filldraw [black] (15,0,0) circle (1pt);
    \filldraw [black] (19,0,0) circle (0pt) node[below] {$y$};
    \filldraw [black] (15,5,0) circle (0pt) node[left] {$z$};
    \filldraw [black] (15,0,7) circle (0pt) node[below] {$x$};
    \draw[densely dotted,->] (15,0,0) -- (19,0,0);
    \draw[densely dotted,->] (15,0,0) -- (15,0,7);
    \draw[densely dotted,->] (15,0,0) -- (15,5,0);

    \filldraw [black] (18,4,0) circle (.75pt) node[right] {$(0,3,4)$};
    \filldraw [black] (15,4,3) circle (.75pt) node[left] {$(3,0,4)$};
    \filldraw [black] (15,1,6) circle (.75pt) node[left] {$(6,0,1)$};
    \filldraw [black] (16,0,6) circle (.75pt) node[below] {$(6,1,0)$};
    \filldraw [black] (18,0,4) circle (.75pt) node[right] {$(4,3,0)$};
    
    \fill[blue,top color=blue, bottom color=blue, fill opacity=0.1] (18,4,0)--(18,0,4)--(16,0,6)--(15,1,6)--(15,4,3);

    \foreach \i in {16,17,18}
    {
    \filldraw [black] (\i,0,0) circle (.75pt);
    }
    
    \foreach \i in {1,2,3,4}
    {
    \filldraw [black] (15,\i,0) circle (.75pt);
    }
    
    \foreach \i in {1,2,3,4,5,6}
    {
    \filldraw [black] (15,0,\i) circle (.75pt);
    }
\end{tikzpicture}\caption{$\mathscr{R}_{7,(6,3,4)}$}\label{fig:1}\end{figure}
\end{ej}

We will sometimes restrict to the case in which $0 < k < c_1+\cdots+c_n$; under this assumption, the Ehrhart polynomial has degree $n-1$. It is straightforward to prove that slices of prisms are always integral polytopes.

As we mentioned in the Introduction, the Ehrhart theory of the ``thin'' slices $\mathscr{R}_{k,\mathbf{c}}$ defined in \eqref{eq:defi-prism} and the ``fat'' slices $\mathscr{R}'_{a,b,\mathbf{c}}$ defined in \eqref{eq:fat-slice} is essentially the same. Let us state this more precisely. Two lattice polytopes $\mathscr{P}_1\subseteq \R^n$ and $\mathscr{P}_2\subseteq \R^m$ are said to be \emph{integrally equivalent} when there is an affine map $\varphi:\R^n\to\R^m$ such that its restriction to $\mathscr{P}_1$ induces a bijection $\varphi:\mathscr{P}_1\to\mathscr{P}_2$ which preserves the lattice, i.e. the image under $\varphi$ of $\Z^n \cap \operatorname{aff}\mathscr{P}_1$ is $\Z^m \cap \operatorname{aff}\mathscr{P}_2$, where $\operatorname{aff}\mathscr{P}_1$ denotes the affine space spanned by $\mathscr{P}_1$ and analogously for $\mathscr{P}_2$.
Something immediate from the definitions is that integrally equivalent polytopes have the same Ehrhart polynomial. 

\begin{prop}\label{prop:fat-thin}
    Let $\mathbf{c}=(c_1,\ldots,c_n)\in \mathbb{Z}_{>0}^n$ and $0\leq a < b$. Then
        \[ \ehr(\mathscr{R}'_{a,b,\mathbf{c}}, t) = \ehr(\mathscr{R}_{b,\mathbf{c}'},t),\]
    where $\mathbf{c}'\in \mathbb{Z}_{>0}^{n+1}$ is given by $\mathbf{c}' = (\mathbf{c}, b-a)$.
\end{prop}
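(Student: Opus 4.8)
The plan is to show that the two polytopes are \emph{integrally equivalent}, from which the equality of Ehrhart polynomials follows immediately by the remark preceding the statement. The natural candidate for the integral equivalence is the projection that forgets the last coordinate, whose inverse introduces a slack variable recording the ``defect'' $b-\sum_i x_i$.

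Concretely, first I would define the affine map $\varphi:\R^{n+1}\to\R^n$ by $\varphi(x_1,\ldots,x_{n+1})=(x_1,\ldots,x_n)$ and argue that its restriction to $\mathscr{R}_{b,\mathbf{c}'}$ is a bijection onto $\mathscr{R}'_{a,b,\mathbf{c}}$. A point $x\in\mathscr{R}_{b,\mathbf{c}'}$ satisfies $0\le x_i\le c_i$ for $i\le n$, $0\le x_{n+1}\le b-a$, and $\sum_{i=1}^{n+1}x_i=b$. Writing $y=\varphi(x)=(x_1,\ldots,x_n)$ we have $y\in\mathscr{R}_{\mathbf{c}}$ and $\sum_{i=1}^n y_i=b-x_{n+1}$; as $x_{n+1}$ ranges over $[0,b-a]$ the quantity $\sum y_i$ ranges over $[a,b]$, so $y\in\mathscr{R}'_{a,b,\mathbf{c}}$. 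Conversely, for any $y\in\mathscr{R}'_{a,b,\mathbf{c}}$ the value $x_{n+1}:=b-\sum_{i=1}^n y_i$ lies in $[0,b-a]$ and the point $(y,x_{n+1})$ lies in $\mathscr{R}_{b,\mathbf{c}'}$ with $\varphi(y,x_{n+1})=y$. Since $x_{n+1}$ is forced by the equation $\sum x_i=b$, the map $\varphi$ is injective on the hyperplane $\operatorname{aff}\mathscr{R}_{b,\mathbf{c}'}\subseteq\{\sum x_i=b\}$, yielding the desired bijection of polytopes.

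Next I would verify the lattice condition. The affine hull of $\mathscr{R}_{b,\mathbf{c}'}$ is contained in the hyperplane $H=\{x\in\R^{n+1}:\sum_{i=1}^{n+1}x_i=b\}$, whose lattice is $\Z^{n+1}\cap H$, while $\mathscr{R}'_{a,b,\mathbf{c}}$ is full-dimensional in $\R^n$ with lattice $\Z^n$. If $x\in\Z^{n+1}\cap H$ then $\varphi(x)\in\Z^n$, and conversely if $y\in\Z^n$ then $b-\sum y_i\in\Z$, so the point $(y,b-\sum y_i)$ lies in $\Z^{n+1}\cap H$. Hence $\varphi$ carries $\Z^{n+1}\cap\operatorname{aff}\mathscr{R}_{b,\mathbf{c}'}$ bijectively onto $\Z^n\cap\operatorname{aff}\mathscr{R}'_{a,b,\mathbf{c}}$, which is exactly the integral-equivalence requirement.

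I expect no serious obstacle here: the content is a clean change of variables, and the only points demanding care are bookkeeping ones. One must check that the capacity $b-a$ of the new coordinate is precisely what converts the two-sided constraint $a\le\sum x_i\le b$ into the single box constraint $0\le x_{n+1}\le b-a$, and that the slack variable is integral whenever the original coordinates are, so that $\varphi$ genuinely preserves the lattice and not merely the real points. Degenerate ranges of $b$ (for instance $b>c_1+\cdots+c_n$, where the upper constraint becomes vacuous) do not affect the argument, since the bijection and the lattice computation are insensitive to which of the box inequalities happen to be active.
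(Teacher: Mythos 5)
Your proof is correct and follows essentially the same route as the paper's: both use the coordinate-forgetting projection $\pi:\R^{n+1}\to\R^n$, show it restricts to a bijection from $\mathscr{R}_{b,\mathbf{c}'}$ onto $\mathscr{R}'_{a,b,\mathbf{c}}$ because the slack coordinate $x_{n+1}=b-\sum_{i=1}^n x_i$ is forced, and conclude via integral equivalence. Your write-up is in fact more detailed than the paper's, which leaves the lattice-preservation verification as an implicit check.
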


\begin{proof}
    Let us prove that, using the notation of the statement, the two polytopes $\mathscr{R}'_{a,b,\mathbf{c}}$ and $\mathscr{R}_{b,\mathbf{c}'}$ are integrally equivalent. Notice that the polytope $\mathscr{R}_{b,\mathbf{c}'}$ lies in $\mathbb{R}^{n+1}$, whereas $\mathscr{R}'_{a,b,\mathbf{c}}\subseteq \mathbb{R}^n$. Let us consider the map $\pi : \mathbb{R}^{n+1}\to \mathbb{R}^n$ that forgets the last coordinate. The image of the polytope $\mathscr{R}_{b,\mathbf{c}'}\subseteq\mathbb{R}^{n+1}$ is given by
        \[ \pi(\mathscr{R}_{b,\mathbf{c}'}) = \left\{ x\in \mathscr{R}_{\mathbf{c}} : a\leq \sum_{i=1}^{n} x_i \leq b  \right\} = \mathscr{R}'_{a,b,\mathbf{c}}.\]
    Moreover, the restriction of $\pi$ to $\mathscr{R}_{b,\mathbf{c}'}$ is in fact a bijection and provides an integral equivalence.
\end{proof}

It is because of this property that we will focus only on the Ehrhart theory of the ``thin'' slices of prisms. 

\subsection{Polymatroids with the strong exchange property}

In \cite{herzog-hibi} Herzog and Hibi introduced the notion of \emph{polymatroid of Veronese type}. For the necessary background on polymatroids we refer to their article, in particular \cite[Example~2.6]{herzog-hibi}. 

\begin{prop}
    Base polytopes of discrete polymatroids of Veronese type are slices of prisms. Conversely, every slice of a prism arises in this way.
\end{prop}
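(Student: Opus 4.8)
The plan is to establish a dictionary between the two descriptions and check that the defining inequalities match. Recall that a discrete polymatroid on the ground set $\{1,\dots,n\}$ is given by a rank function, and its base polytope consists of the points $x \in \mathbb{R}^n_{\geq 0}$ with $\sum_{i=1}^n x_i = r$ (the rank) and $\sum_{i \in S} x_i \leq \operatorname{rk}(S)$ for every subset $S$. A polymatroid of Veronese type is the one whose rank function is $\operatorname{rk}(S) = \min\bigl(k,\, \sum_{i \in S} c_i\bigr)$ for a fixed vector $\mathbf{c} = (c_1,\dots,c_n)$ of positive integers and a fixed $k$; this is the content of \cite[Example~2.6]{herzog-hibi}, which I would cite for the combinatorial setup.

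First I would write down the base polytope of the Veronese-type polymatroid explicitly and simplify the constraints. The equality $\sum_{i=1}^n x_i = k$ forces the rank to be $k = \operatorname{rk}(\{1,\dots,n\})$, which requires $k \leq c_1 + \cdots + c_n$. The key observation is that among the inequalities $\sum_{i \in S} x_i \leq \min(k, \sum_{i \in S} c_i)$, the bound $\min(\cdots) = k$ is never active as a genuine constraint once we impose $\sum_i x_i = k$ together with $x_i \geq 0$, since $\sum_{i \in S} x_i \leq \sum_{i=1}^n x_i = k$ holds automatically. Hence the only binding inequalities are $\sum_{i \in S} x_i \leq \sum_{i \in S} c_i$, and I claim these are in turn equivalent to the single-coordinate box constraints $x_i \leq c_i$ for all $i$. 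One direction is trivial (take $S = \{i\}$); for the converse, summing $x_i \leq c_i$ over $i \in S$ gives the subset inequality, so the full family is implied by the box constraints. This collapses the base polytope exactly to $\{x : 0 \leq x_i \leq c_i,\ \sum x_i = k\} = \mathscr{R}_{k,\mathbf{c}}$, as defined in \eqref{eq:defi-prism}.

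For the converse direction of the proposition, I would simply reverse the reasoning: given an arbitrary slice $\mathscr{R}_{k,\mathbf{c}}$ with $\mathbf{c} \in \mathbb{Z}_{>0}^n$ and $0 < k < c_1 + \cdots + c_n$, define the set function $\operatorname{rk}(S) = \min(k, \sum_{i \in S} c_i)$ and verify that it is the rank function of a discrete polymatroid, i.e.\ that it is nonnegative, monotone, and submodular. Monotonicity is clear since both $k$ and $\sum_{i \in S} c_i$ are nondecreasing in $S$, and submodularity of a minimum of two submodular (indeed modular, for the second term) functions is a standard check. By the first part of the argument, the base polytope of this polymatroid is precisely $\mathscr{R}_{k,\mathbf{c}}$, establishing surjectivity of the correspondence.

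The main obstacle is not any single deep step but rather the care needed in the reduction of the inequality system: one must argue cleanly that the $\min$ in the rank function never produces an extra facet beyond the box constraints, and that no box constraint is redundant on the slice. In the degenerate regimes $k = 0$ or $k = c_1 + \cdots + c_n$ the polytope degenerates to a point, so I would either exclude these or note they correspond to trivial polymatroids; assuming $0 < k < c_1 + \cdots + c_n$ keeps the base polytope full-dimensional within its affine hull and avoids these edge cases. Verifying submodularity of $\min(k, \sum_{i \in S} c_i)$ is the one place where a short explicit computation is warranted, but it is routine.
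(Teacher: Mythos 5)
Your proposal is correct, and it is essentially the argument the paper leaves implicit: the paper states this proposition without proof, deferring to \cite[Example~2.6]{herzog-hibi}, where the Veronese-type polymatroid is precisely the one with rank function $\operatorname{rk}(S)=\min\bigl(k,\sum_{i\in S}c_i\bigr)$, and your reduction of the inequality system to the box constraints $x_i\leq c_i$ on the hyperplane $\sum_i x_i = k$ is exactly the routine verification being referenced. One caution on wording: ``submodularity of a minimum of two submodular functions'' is not a valid general principle (the minimum of two submodular functions need not be submodular); what is true, and what the short computation you flag does establish, is that truncating a monotone modular function by a constant preserves submodularity.
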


In particular, since base polytopes of polymatroids are generalized permutohedra, we obtain that the edge directions of any slice of a prism is of the form $e_i-e_j$. On the other hand, in light of the form of the inequalities that describe a slice of a prism, it follows that they are alcoved polytopes (as in \cite{lam-postnikov-polypositroids}), hence:

\begin{cor}
    Slices of prisms are polypositroids.
\end{cor}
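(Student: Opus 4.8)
The plan is to read off the conclusion from the characterization of polypositroids recalled above, namely that a polytope is a polypositroid precisely when it is simultaneously a polymatroid and an alcoved polytope. Accordingly I would verify these two properties separately for a slice $\mathscr{R}_{k,\mathbf{c}}$, treating the degenerate cases $k=0$ and $k=c_1+\cdots+c_n$ (where the polytope is a single lattice point, hence trivially a polypositroid) apart, and assuming $0<k<c_1+\cdots+c_n$ otherwise.

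The polymatroid property is immediate from the preceding Proposition: every slice of a prism is the base polytope of a discrete polymatroid of Veronese type, and base polytopes of polymatroids are generalized permutohedra, so all edges of $\mathscr{R}_{k,\mathbf{c}}$ are parallel to vectors $e_i-e_j$. The substantive point is the alcoved property, and here the key step is a change of coordinates to partial sums. Setting $u_0=0$ and $u_i=x_1+\cdots+x_i$, so that $u_n=k$ and $x_i=u_i-u_{i-1}$, the defining inequalities $0\le x_i\le c_i$ of $\mathscr{R}_{k,\mathbf{c}}$ translate into
\[ u_{i-1}\le u_i\le u_{i-1}+c_i \qquad (1\le i\le n). \]
In the variables $u_1,\dots,u_{n-1}$ (with $u_0=0$ and $u_n=k$ fixed) every facet hyperplane is thus of the form $u_i-u_{i-1}=\mathrm{const}$, i.e.\ parallel to a hyperplane of the affine Coxeter arrangement of type $A_{n-1}$, which is exactly the defining condition for an alcoved polytope. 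Since the partial-sum substitution is unimodular, this exhibits $\mathscr{R}_{k,\mathbf{c}}$ as (integrally equivalent to) an alcoved polytope; this is the same mechanism by which the hypersimplex $\Delta_{k,n}$ is classically seen to be alcoved.

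Having both properties in hand, $\mathscr{R}_{k,\mathbf{c}}$ is simultaneously a polymatroid and an alcoved polytope, hence a polypositroid, as claimed. The only delicate point in the argument is the alcoved step: one must match the single-coordinate inequalities $0\le x_i\le c_i$, which are not directly differences of coordinates, to the root-hyperplane form required by the definition, and the partial-sum change of variables is precisely what accomplishes this.
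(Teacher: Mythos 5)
Your proposal is correct and follows essentially the same route as the paper: the polymatroid property is quoted from the preceding proposition, and the alcoved property is read off from the form of the facet inequalities, which is exactly what the paper does when it says the claim follows ``in light of the form of the inequalities.'' The only difference is one of detail: the paper leaves the alcoved verification implicit, while you make it explicit via the standard partial-sum (unimodular) change of coordinates $u_i=x_1+\cdots+x_i$ that turns the bounds $0\le x_i\le c_i$ into root-hyperplane inequalities $u_{i-1}\le u_i\le u_{i-1}+c_i$ of the affine type-$A$ arrangement.
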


Undertaking the discussion on polymatroids of Veronese type, as we mentioned in the introduction, they admit a very neat characterization. They are (up to an affinity) precisely the discrete polymatroids that satisfy the strong exchange property (we refer to \cite{herzog-hibi-vladoiu} for the details).

\subsection{The Ehrhart polynomial explicitly}

Towards a proof of Theorem~\ref{main}, the first step is to give a somewhat explicit expression for the Ehrhart polynomial of $\mathscr{R}_{k,\mathbf{c}}$. To simplify the statement, we introduce some notation. From now on, whenever $\mathbf{c}=(c_1,\ldots,c_n)\in \mathbb{Z}_{> 0}^n$ is fixed, for each $0\leq j \leq n$ we will denote 
    \begin{equation}\label{eq:def-rho} \rho_{\mathbf{c},j}(s) := \#\left\{ I\in \binom{[n]}{j} : \sum_{i \in I} c_i = s\right\}.\end{equation}
In other words $\rho_{\mathbf{c},j}(s)$ is the number of ways of choosing exactly $j$ of the $c_i$'s in such a way that their sum is exactly $s$. If $j=0$ we define $\rho_{\mathbf{c},j}(s)$ to be $1$ if $s=0$ and $0$ otherwise.

\begin{teo}\label{thm:first-formula}
    Let $\mathbf{c}=(c_1,\dots,c_n)\in \mathbb{Z}_{>0}^n$. For each positive integer $k$, the Ehrhart polynomial of the polytope $\mathscr{R}_{k,\mathbf{c}}$ is given by
    \[ \ehr(\mathscr{R}_{k,\mathbf{c}},t)= \sum_{j=0}^{k-1}(-1)^j\sum_{v=0}^{k-1} \binom{t(k-v)+n-1-j}{n-1} \rho_{\mathbf{c},j}(v). \]
\end{teo}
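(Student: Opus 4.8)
The plan is to count lattice points in $\mathscr{R}_{k,\mathbf{c}}$ directly by enumerating integer solutions to the system arising in the dilate $t\mathscr{R}_{k,\mathbf{c}}$, and then to repackage that count via inclusion--exclusion over the upper-bound constraints. Recall that a lattice point in $t\mathscr{R}_{k,\mathbf{c}}$ is an integer vector $x=(x_1,\ldots,x_n)$ satisfying $\sum_i x_i = tk$ together with $0\le x_i \le t c_i$ for each $i$. Dropping the upper bounds momentarily, the number of nonnegative integer solutions to $\sum_i x_i = tk$ is $\binom{tk+n-1}{n-1}$, the classical stars-and-bars count. To impose the upper bounds $x_i \le t c_i$, I would use inclusion--exclusion: for a subset $I\subseteq[n]$, the number of solutions violating the bound at every coordinate in $I$ (i.e.\ with $x_i \ge t c_i + 1$ for $i\in I$) is obtained by the substitution $x_i \mapsto x_i - (tc_i+1)$, which shifts the total to $tk - \sum_{i\in I}(tc_i+1) = t(k - \sum_{i\in I} c_i) - |I|$, yielding $\binom{t(k-\sum_{i\in I}c_i) - |I| + n-1}{n-1}$ solutions when this is nonnegative.

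Summing with signs $(-1)^{|I|}$ over all $I$ gives
\[
\ehr(\mathscr{R}_{k,\mathbf{c}},t) = \sum_{I\subseteq[n]} (-1)^{|I|}\binom{t\bigl(k-\sum_{i\in I}c_i\bigr) - |I| + n-1}{n-1}.
\]
The next step is to organize this sum by recording two statistics of the index set $I$: its cardinality $j=|I|$ and the sum $v=\sum_{i\in I}c_i$ of the chosen capacities. By the very definition of $\rho_{\mathbf{c},j}(v)$ in equation~\eqref{eq:def-rho}, the number of subsets $I$ with $|I|=j$ and $\sum_{i\in I}c_i = v$ is exactly $\rho_{\mathbf{c},j}(v)$. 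Grouping the inclusion--exclusion sum accordingly transforms it into
\[
\ehr(\mathscr{R}_{k,\mathbf{c}},t) = \sum_{j=0}^{n}(-1)^j \sum_{v\ge 0} \binom{t(k-v)+n-1-j}{n-1}\rho_{\mathbf{c},j}(v),
\]
which is nearly the claimed formula; it remains only to justify truncating both summation ranges to $0\le j\le k-1$ and $0\le v\le k-1$.

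The main obstacle, and the step requiring the most care, is handling these truncations and the binomial coefficient convention for possibly-negative upper arguments. I would argue that the binomial $\binom{t(k-v)+n-1-j}{n-1}$, read as the polynomial $\prod_{r=0}^{n-2}(t(k-v)+n-1-j-r)/(n-1)!$ in $t$, contributes zero lattice points precisely when the shifted total $t(k-v)-j$ is negative, so that terms with $v>k$ (or $v=k$ with $j>0$) can be dropped from the genuine point-count; one must verify that dropping them is consistent with the polynomial identity, appealing to the standard fact that both sides agree as polynomials once they agree for all large $t$. Similarly, terms with $j\ge k$ either vanish or cancel against the excluded high-$v$ contributions, allowing the upper limit on $j$ to be lowered to $k-1$. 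The cleanest route is to prove the identity first for integers $t$ large enough that every retained binomial counts honestly, establish agreement of the two polynomial expressions there, and then invoke that two polynomials agreeing at infinitely many points are equal. I would keep a careful eye on the edge cases $v=k$ and the interplay between the $\rho_{\mathbf{c},j}$ convention at $(j,v)=(0,0)$ and the $j=0$ term $\binom{tk+n-1}{n-1}$, which must reproduce the unconstrained stars-and-bars count.
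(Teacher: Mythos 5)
Your proposal is correct and is in substance the same proof as the paper's: the paper carries out the identical inclusion--exclusion over violated upper-bound constraints, merely packaged as coefficient extraction, writing $\ehr(\mathscr{R}_{k,\mathbf{c}},t)=[x^{kt}]\bigl(\tfrac{1}{(1-x)^n}\prod_{i=1}^n(1-x^{c_it+1})\bigr)$, where expanding the product is exactly your signed sum over subsets $I$ and convolving with $1/(1-x)^n$ is your stars-and-bars count, followed by the same regrouping by $j=|I|$ and $v=\sum_{i\in I}c_i$. The one point you flagged as delicate resolves more simply than you suggest: since every $c_i\geq 1$ we always have $j\leq v$, so the terms with $j\geq k$ vanish outright (no cancellation against high-$v$ terms is needed), and your ``verify for large $t$, then invoke polynomiality'' argument is a perfectly clean way to finish, where the paper instead works exactly for every $t$ by implicitly using that the stray binomials $\binom{t(k-v)+n-1-j}{n-1}$ with $1\leq j-t(k-v)\leq n-1$ are zero.
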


\begin{proof}
    By using the definition, we can write
    \[ \mathscr{R}_{k,\mathbf{c}} = \left\{ x\in \mathbb{R}^n_{\geq 0} : \sum_{i=1}^n x_i = k \text{ and } x_i \leq c_i \text{ for all $1\leq i \leq n$} \right\}.\]
    Therefore, it follows that
    \begin{align*}
        \ehr(\mathscr{R}_{k,\mathbf{c}}, t) &= \#(t \mathscr{R}_{k,\mathbf{c}} \cap \mathbb{Z}^n)\\
        &= \# \left\{ x\in \mathbb{Z}_{\geq 0}^n: \sum_{i=1}^n x_i = kt \text{ and } x_i \leq c_it \text{ for all $1\leq i \leq n$} \right\}\\
        &= [x^{kt}] \prod_{i=1}^n (1+x+x^2+\cdots+x^{c_it}).
    \end{align*}
    Using the identity $1+x+\cdots+x^{c_it} = \dfrac{1-x^{c_it+1}}{1-x}$, we can further reduce
    \begin{equation}\label{eq:ehr_r_gf}
        \ehr(\mathscr{R}_{k,\mathbf{c}},t) = [x^{kt}] \left( \frac{1}{(1-x)^n} \prod_{i=1}^n (1-x^{c_it+1})\right). 
    \end{equation}
    Now, let us consider a generic coefficient of the factor $\prod_{i=1}^n (1-x^{c_it+1})$; we can write
        \begin{equation}\label{eq:coeff_of_product}
        [x^u] \prod_{i=1}^n (1-x^{c_it+1}) =  \sum_{j=0}^n(-1)^j\#\left\{i_1<\cdots<i_j : \sum_{\ell=1}^j (c_{i_\ell}t+1) = u\right\}.
        \end{equation}
    Thus, we can use what we obtained in equation \eqref{eq:coeff_of_product} to simplify our formula in \eqref{eq:ehr_r_gf}. Recall the classical generating function identity: $\frac{1}{(1-x)^n} = \sum_{i=0}^{\infty} \binom{n-1+i}{n-1} x^i$.
        \begin{align*}
            \ehr(\mathscr{R}_{k,\mathbf{c}},t) &= [x^{kt}] \left( \frac{1}{(1-x)^n} \prod_{i=1}^n (1-x^{c_it+1})\right)\\
            &= [x^{kt}] \left( \sum_{i=0}^{\infty} \binom{n-1+i}{n-1}x^i \cdot \prod_{i=1}^n (1-x^{c_it+1})\right)\\ 
            &= \sum_{u=0}^{kt} \binom{n-1+kt-u}{n-1}\left([x^u] \prod_{i=1}^n (1-x^{c_it+1})\right)\\
            &= \sum_{u=0}^{kt} \binom{n-1+kt-u}{n-1} \sum_{j=0}^{k-1}(-1)^j\#\left\{i_1<\cdots<i_j : \sum_{\ell=1}^j (c_{i_\ell}t+1) = u\right\}\\
            &= \sum_{j=0}^{k-1} (-1)^j \underbrace{\sum_{u=0}^{kt} \binom{n-1+kt-u}{n-1}\#\left\{i_1<\cdots<i_j : \sum_{\ell=1}^j (c_{i_\ell}t+1) = u\right\}}_{(\star)}.
        \end{align*}
    Let us focus on the sum labeled by $(\star)$. If $j=0$, then it reduces to $\binom{n-1+kt}{n-1}$, since only $u=0$ contributes, as the sum of the ``empty choice'' of $c_i$'s is zero by definition. On the other hand, if we assume that $j\geq 1$, then the condition $\sum_{\ell=1}^j (c_{i_\ell}t+1) = u$ implies that $(c_{i_1}+\cdots+c_{i_j})t + j = u$, hence $u\equiv j\pmod{t}$. Therefore, since $0\leq u\leq kt$, we look only at the values $u = j$, $u=t+j$, $u=2t+j$, \ldots, $u = (k-1)t+j$. Namely
    \begin{align*}
        (\star) &= \sum_{v=0}^{k-1} \binom{n-1+kt-(vt+j)}{n-1} \# \left\{i_1<\cdots<i_j : c_{i_1}+\cdots+c_{i_j} = v\right\}\\
        &= \sum_{v=0}^{k-1} \binom{t(k-v)+n-1-j}{n-1} \rho_{\mathbf{c},j}(v).
    \end{align*}
    Thus, we obtain
    \[ \ehr(\mathscr{R}_{k,\mathbf{c}},t) = \sum_{j=0}^{k-1}(-1)^j \sum_{v=0}^{k-1}\binom{t(k-v)+n-1-j}{n-1}\rho_{\mathbf{c},j}(v), \]
    and the proof is complete.
\end{proof}

Observe that the particular case $\mathbf{c} = (1,\ldots,1)$ yields
    \[ \rho_{\mathbf{c},j}(v) = \left\{ \begin{matrix} \binom{n}{j} & \text{ if $v=j$}\\
    0 & \text{ if $v\neq j$}\end{matrix}\right.\]
and the formula reduces to 
    \[ \ehr(\Delta_{k,n}, t) = \ehr(\mathscr{R}_{k,(1,\ldots,1)},t)= \sum_{j=0}^{k-1}(-1)^j \binom{n}{j}\binom{t(k-j)+n-1-j}{n-1},\]
which is the known formula for the Ehrhart polynomial of the hypersimplex. Notice that even in this very particular case this sum is alternating in sign and that the variable $t$ appears inside a binomial coefficient, which is actually a polynomial with some negative coefficients when $j\geq 2$.

\section{Weighted permutations}\label{sec:3}

As the statement of Theorem~\ref{main} anticipates, the role played by the numbers $W(\ell,n,m+1,\mathbf{c})$ is fundamental. Its definition is slightly involved and cumbersome. The main motivation comes from the papers \cite{ferroni1} and \cite{hanelyetal}. Nevertheless, we let the reader know that this section is completely self-contained. The technical part is the proof of Theorem~\ref{thm:technical}, which provides an explicit (but rather complicated) formula for $W(\ell,n,m+1,\mathbf{c})$. This will play a crucial role in the proof of Theorem~\ref{main}. 

When dealing with permutations $\sigma\in\mathfrak{S}_n$ we will denote  $C(\sigma)$ the set of all its cycles. 

\subsection{Weighted Lah Numbers Revisited} 

First, recall that the \textit{Lah number} $L(n,m)$ is defined as the number of ways of partitioning $[n]$ into a set of exactly $m$ blocks, each of which is internally ordered. For example, if we partition the set $[4]$ into two blocks, then we distinguish between $\{(1,3),(2,4)\}$ and $\{(1,3),(4,2)\}$. Let us denote by $\mathscr{L}(n,m)$ the set of all such partitions. It is not difficult to show that $L(n,m)=|\mathscr{L}(n,m)| = \frac{n!}{m!}\binom{n-1}{m-1}$.

\begin{defi}[\cite{ferroni1}]
	Let $\pi\in \mathscr{L}(n,m)$. We define the \textit{weight of $\pi$} by
		\[ w(\pi) := \sum_{b\in\pi} w(b),\]
	where $w(b)$ is the number of elements in the block $b$ that are smaller (as positive integers) than the first element in $b$.
\end{defi}

As a quick example, consider the partition $\pi=\{(5,3,7),(6,2,4,1)\}\in \mathscr{L}(7,2)$, which has weight $w(\pi)=1+3=4$. 

\begin{defi}[\cite{ferroni1}]
	We define the \textit{weighted Lah Numbers} $W(\ell,n,m)$ as the number of partitions $\pi\in\mathscr{L}(n,m)$ such that $w(\pi) = \ell$. The set of all such $\pi$ is denoted by $\mathscr{W}(\ell,n,m)$.
\end{defi}

In \cite[Remark 3.11]{ferroni1} it is provided a linear recurrence that the weighted Lah numbers satisfy and that may be used to compute them.

The reason why they are relevant in this paper is because we are going to provide a vast generalization for them. Essentially, apart of the three parameters $\ell$, $n$ and $m$, we will introduce an additional input $\mathbf{c}$ consisting of $n$ integer numbers. When all the entries of the vector $\mathbf{c}$ are ones, we will recover the weighted Lah numbers.

\subsection{Weights for permutations}

In \cite{hanelyetal} a different approach to the weighted Lah numbers is provided. We will review that version of their definition here. One particular advantage that it has is that these numbers can be thought as the quantity of ``weighted permutations'' satisfying a certain property. This suggests how to provide the right generalization for the main result of this section.

\begin{defi}[\cite{hanelyetal}]
    Let $\sigma\in \mathfrak{S}_n$. A \emph{weight} for $\sigma$ is a map $w:C(\sigma) \to \mathbb{Z}_{\geq 0}$. The \emph{total weight} of $\sigma$ with respect to $w$, denoted by $w(\sigma)$, is defined by
        \[ w(\sigma) = \sum_{\mathfrak{c}\in C(\sigma)} w(\mathfrak{c}).\]
    The pair $(\sigma,w)$ will be called a \emph{weighted permutation}.
\end{defi}

\begin{ej}\label{ex:basic}
    Assume that for every $\sigma\in \mathfrak{S}_5$, we consider the weight induced by $w(\mathfrak{c}) = \max \{i : i\in \mathfrak{c}\}$. If we consider the permutation $(2,1,5,4,3)\in \mathfrak{S}_5$, written as the product of the cycles $(1\; 2)(3\; 5)(4)$, its total weight is $2 + 5 + 4 = 11$. 
\end{ej}

\begin{prop}\label{prop:bijection1}
    There is a bijection between $\mathscr{L}(n,m)$ and the set of all weighted permutations $(\sigma,w)$ where $\sigma\in \mathfrak{S}_n$ has exactly $m$ cycles and $w(\mathfrak{c}) < |\mathfrak{c}|$ for all $\mathfrak{c}\in C(\sigma)$.
\end{prop}

\begin{proof}
    The bijection is constructed explicitly as follows. Take $(\sigma,w)$ as in the statement. To each cycle $\mathfrak{c}\in C(\sigma)$ we assign a linearly ordered set $b$ consisting of the elements of $\mathfrak{c}$ as follows: choose the first element of $b$ to be the $(w(\mathfrak{c})+1)$-th smallest element in $\mathfrak{c}$ and complete the ordering of $b$ according to the order induced by the cycle $\mathfrak{c}$. For example, if
    \[ \sigma = (1\; 2\; 6)(3\; 5\; 7)(4\; 8),\]
    where $w((1\; 2\; 6)) = 1$, $w((3\; 5\; 7))=2$ and $w((4\; 8)) = 0$, then we take the partition
    \[ \pi = \{(2,6,1),(7,3,5),(4,8)\}.\] 
    It is straightforward to verify that this is indeed a bijection.
\end{proof}

\begin{rem}\label{rem:clue}
    As a consequence of the preceding result, if we fix $0\leq \ell \leq n-m$ and restrict ourselves to the stratum $\mathscr{W}(\ell,n,m) \hookrightarrow \mathscr{L}(n,m)$, we see that it is in one-to-one correspondence with the family of all the weighted permutations $(\sigma, w)$ such that $\sigma\in \mathfrak{S}_n$ has $m$ cycles, $w(\mathfrak{c}) < |\mathfrak{c}|$ for every $\mathfrak{c}\in C(\sigma)$ and $w(\sigma) = \ell$.
\end{rem}

\subsection{Compatibility}\label{CompatibilitySection}

The bijection mentioned in Remark~\ref{rem:clue} suggests the following definition. Essentially, we use a certain bound for the weight of the cycles that depends on the elements that the cycles contain and not on their lengths as in Proposition~\ref{prop:bijection1}. 

\begin{defi}
    Let $(\sigma, w)$ be a weighted permutation, $\sigma\in \mathfrak{S}_n$. Let us fix the vector $\mathbf{c}=(c_1,\ldots,c_n)\in \mathbb{Z}_{>0}^n$. We say that $(\sigma,w)$ is \emph{$\mathbf{c}$-compatible} if 
        \[ w(\mathfrak{c}) < \sum_{i\in \mathfrak{c}} c_i \]
    for every cycle $\mathfrak{c}\in C(\sigma)$.
\end{defi}

\begin{ej}
    Consider the permutation $\sigma = (2,1,5,4,3)\in \mathfrak{S}_5$ with the weight $w$ given in Example \ref{ex:basic}. If we consider the vector $\mathbf{c} = (8,10,7,8,8)$, then $(\sigma,w)$ is $\mathbf{c}$-compatible. However, if we take the vector $\mathbf{c}=(3,3,3,3,3)$, then $(\sigma,w)$ is not $\mathbf{c}$-compatible.
\end{ej}

Let us consider $\mathbf{c} = \mathbf{1} = (1,\ldots,1)\in \mathbb{Z}_{> 0}^n$. A weighted permutation $\sigma\in \mathfrak{S}_n$ is $\mathbf{1}$-compatible if and only if each cycle of $\sigma$ is assigned a number smaller than its length. In other words, by Proposition~\ref{prop:bijection1} we have that $\mathscr{L}(n,m)$ is in bijection with the set of all $\mathbf{1}$-compatible weighted permutations $(\sigma,w)$ such that $\sigma\in \mathfrak{S}_n$ has exactly $m$ cycles.

\begin{defi}\label{def:cwhlah}
    For $\mathbf{c}\in\mathbb{Z}_{> 0}^n$, let us denote by $\mathscr{L}(n,m,\mathbf{c})$ the set of all $\mathbf{c}$-compatible weighted permutations $(\sigma,w)$ where $\sigma\in \mathfrak{S}_n$ has $m$ cycles. Let us denote by $\mathscr{W}(\ell,n,m,\mathbf{c})$ the family of all $(\sigma,w)\in \mathscr{L}(n,m,\mathbf{c})$ with total weight $w(\sigma) = \ell$. 
\end{defi}

If we use the notation
    \begin{align*}
        L(n,m,\mathbf{c}) &:= |\mathscr{L}(n,m,\mathbf{c})|,\\
        W(\ell,n,m,\mathbf{c}) &:= |\mathscr{W}(\ell,n,m,\mathbf{c})|,
    \end{align*}
then by taking $\mathbf{c} = \mathbf{1}$, we have that $L(n,m,\mathbf{1}) = L(n,m)$ and $W(\ell,n,m,\mathbf{1}) = W(\ell,n,m)$.

\subsection{A counting formula for weighted permutations}

The main goal now is to provide a formula for $W(\ell,n,m,\mathbf{c})$ that is as concrete as possible. We introduce some notation that will be useful not only in this section but later in the paper. 

If $[a,b]$ is an interval of (possibly negative) integers, we will denote 
    \[ P_{a,b}^s := \sum_{I\in \binom{[a,b]}{s}} \prod_{i \in I} i = \sum_{a\leq i_1 < \cdots < i_s \leq b} i_1\cdots i_s.\]
This is the $s$-th elementary symmetric polynomial in $b-a+1$ variables, evaluated in the integers of the interval $[a,b]$. 

\begin{rem}
    Observe that when $a=1$, the number $P_{a,b}^s$ reduces to a Stirling number of the first kind,
        \[ P_{1,b}^s = \stirling{b+1}{b+1-s},\]
    that is, the number of permutations in $\mathfrak{S}_{b+1}$ with $b+1-s$ cycles.
    Also, if $a > 0$ and $b > 0$, we have
    \begin{align} 
        P_{-a,b}^s &= \sum_{j=0}^s P_{-a,-1}^j P_{1,b}^{s-j}\nonumber\\
        &= \sum_{j=0}^s (-1)^j P_{1,a}^j P_{1,b}^{s-j}\nonumber\\
        &= \sum_{j=0}^s (-1)^j \stirling{a+1}{a+1-j}\stirling{b+1}{b+1-s+j}.\label{eq:P-stirling}
    \end{align}
\end{rem}

\begin{teo}\label{thm:technical}
 For every $\mathbf{c}\in \mathbb{Z}_{> 0}^n$, the following formula holds:
    \[
    W(\ell,n,m+1,\mathbf{c}) = \sum_{j=0}^{n}(-1)^jP^{n-1-m}_{-j+1,n-1-j}\sum_{i=0}^\ell\rho_{\mathbf{c},j}(i)\binom{m+\ell-i}{m}.
    \]
\end{teo}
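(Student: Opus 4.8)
The goal is to count $\mathbf{c}$-compatible weighted permutations $(\sigma,w)$ with $\sigma\in\mathfrak{S}_n$ having $m+1$ cycles and total weight $\ell$. My approach is to organize the count according to which cycle contains the element $n$ (a standard way to set up a recursion or a direct decomposition for permutation statistics), and more importantly to separate the combinatorial data into an unweighted ``cycle structure'' part and a ``weight distribution'' part. The key observation is that for a fixed permutation $\sigma$ with cycle set $C(\sigma)$, the number of $\mathbf{c}$-compatible weights of total weight $\ell$ is the number of ways to write $\ell=\sum_{\mathfrak{c}}w(\mathfrak{c})$ subject to $0\le w(\mathfrak{c})<\sum_{i\in\mathfrak{c}}c_i$. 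The idea is to first count with the \emph{upper bounds removed} (which gives a clean $\binom{m+\ell}{m}$-type stars-and-bars factor for $m+1$ cycles) and then subtract off the over-counting from violated bounds via inclusion–exclusion.

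\textbf{Key steps.}
First I would write $W(\ell,n,m+1,\mathbf{c})=\sum_{\sigma}\#\{w : (\sigma,w)\text{ is }\mathbf{c}\text{-compatible},\ w(\sigma)=\ell\}$ where $\sigma$ ranges over permutations of $[n]$ with $m+1$ cycles. For a single cycle $\mathfrak{c}$, compatibility means $w(\mathfrak{c})<\sum_{i\in\mathfrak{c}}c_i$, so by inclusion–exclusion on the set of cycles whose bound is violated, the number of compatible weight assignments summing to $\ell$ is
\[
\sum_{S\subseteq C(\sigma)}(-1)^{|S|}\binom{m+\ell-\sum_{\mathfrak{c}\in S}(\sum_{i\in\mathfrak{c}}c_i)}{m},
\]
where the binomial coefficient is the stars-and-bars count for distributing the remaining weight among $m+1$ cycles with no upper constraint. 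Next I would sum over $\sigma$, grouping by the number $j=\sum_{\mathfrak{c}\in S}|\mathfrak{c}|$ of elements in the ``violated'' cycles and by $i=\sum_{\mathfrak{c}\in S}\sum_{t\in\mathfrak{c}}c_t$, the total $\mathbf{c}$-mass they carry. The term $\rho_{\mathbf{c},j}(i)$ should emerge here as counting the ways to choose which $j$ elements lie in violated cycles and have $\mathbf{c}$-sum $i$. The remaining factor must then be an enumeration, over ways of arranging $j$ chosen elements into cycles and the rest into the surviving cycles, that collapses to the symmetric-function quantity $P^{n-1-m}_{-j+1,\,n-1-j}$. I expect this collapse to follow from the Stirling-number interpretation of $P^s_{1,b}$ recorded in the remark before the statement, combined with the signed convolution identity \eqref{eq:P-stirling}: the factor $(-1)^j$ and the shift to a negative lower endpoint $-j+1$ should come precisely from the inclusion–exclusion sign together with the bookkeeping of how the $j$ flagged elements interact with the cycle-counting generating polynomial.

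\textbf{The main obstacle.}
The genuinely hard part will be the middle step: showing that after fixing $j$ and $i$, the sum over all compatible cycle configurations of $(-1)^{|S|}$ times the appropriate Stirling-type weights telescopes exactly into $(-1)^j P^{n-1-m}_{-j+1,n-1-j}$. This requires identifying the right generating-function identity for counting permutations by cycle number where a distinguished $j$-element subset is forced to lie in the ``violated'' cycles, and reconciling the sign $(-1)^{|S|}$ (indexed by the number of violated cycles) with the sign $(-1)^j$ (indexed by the number of elements they contain). The negative lower bound $-j+1$ in $P^{n-1-m}_{-j+1,n-1-j}$ strongly suggests that the correct route is to expand $P^{n-1-m}_{-j+1,n-1-j}$ via \eqref{eq:P-stirling} into a signed convolution of two Stirling numbers and match each summand to a contribution from cycles built on the flagged versus unflagged elements; carrying out this matching cleanly, rather than by brute force, is where the real work lies.
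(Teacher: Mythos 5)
Your proposal is correct and follows essentially the same route as the paper's proof: inclusion--exclusion over the cycles whose weight bound is violated, the stars-and-bars factor $\binom{m+\ell-i}{m}$ for distributing the residual weight among the $m+1$ cycles, grouping by the element set of the violated cycles to produce $\rho_{\mathbf{c},j}(i)$, and the Stirling convolution \eqref{eq:P-stirling} to identify $(-1)^j P^{n-1-m}_{-j+1,n-1-j}$ (the paper just runs the argument in the opposite direction, expanding the right-hand side and interpreting each term as a set of weighted permutations). The step you flag as the main obstacle resolves exactly as you predict: for a fixed $j$-set $A$, arranging $A$ into $j-u$ violated cycles and $[n]\smallsetminus A$ into the remaining cycles contributes $\stirling{j}{j-u}\stirling{n-j}{m+1+u-j}$ with sign $(-1)^{j-u}$, and summing over $u$ is precisely the convolution \eqref{eq:P-stirling}.
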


\begin{proof}
    First, by using \eqref{eq:P-stirling} we can rewrite the expression on the right as follows
    \begin{align*}
        &\sum_{j=0}^{n}\sum_{u=0}^j(-1)^{j-u} \stirling{j}{j-u}\stirling{n-j}{m+1+u-j}\sum_{i=0}^\ell\rho_{\mathbf{c},j}(i)\binom{m+\ell-i}{m}\\
        &=\sum_{j=0}^{n}\sum_{u=0}^j(-1)^{j-u} \stirling{j}{j-u}\stirling{n-j}{m+1+u-j}\sum_{i=0}^\ell\sum_{\substack{A\in \binom{[n]}{j}\\ \sum_{a\in A} c_a=i}}\binom{m+\ell-i}{m}\\
        &=\sum_{j=0}^{n}\sum_{u=0}^j\sum_{i=0}^\ell\sum_{\substack{A\in \binom{[n]}{j}\\ \sum_{a\in A} c_a=i}}(-1)^{j-u} \stirling{j}{j-u}\stirling{n-j}{m+1+u-j}\binom{m+\ell-i}{m}.
    \end{align*}
    If we look at a fixed $A\in \binom{[n]}{j}$ such that $\sum_{a\in A}c_a=i$, the quantity
    \[\stirling{j}{j-u}\stirling{n-j}{m+1+u-j}\binom{m+\ell-i}{m}\] 
    can be seen as the number of weighted permutations $(\sigma,w)$ that simultaneously satisfy the following properties:
    \begin{itemize}
        \item $\sigma\in \mathfrak{S}_n$ has exactly $m+1$ cycles.
        \item $w(\sigma) =\ell$.
        \item Exactly $j-u$ of the cycles consist only of elements that are contained in $A$.
        \item The remaining $m+1-j+u$ cycles consist only of elements that are contained in $[n]\smallsetminus A$.
        \item $w(\mathfrak{c}) \geq \sum_{h \in \mathfrak{c}} c_h$ for each of the $j-u$ cycles $\mathfrak{c}\in C(\sigma)$ that consist only of elements contained in $A$. 
    \end{itemize}
    The last condition explains the factor $\binom{m+\ell-i}{m}$, which is exactly the number of ways of putting $\ell-i$ balls into $m+1$ boxes. This is because we put at least $i = \sum_{a\in A} c_a$ weight in the cycles consisting of elements in $A$, and then we can assign the remaining weight $\ell - i$ in $\binom{m+\ell-i}{m}$ different ways. 
    
    Let us call $\widehat{\mathscr{W}}(n,m+1,\ell,A,j-u,\mathbf{c})$ the set of all weighted permutations $(\sigma, w)$ satisfying the five conditions above (recall that $i$ is determined from $A$ and $\mathbf{c}$). Further, notice that \[\widehat{\mathscr{W}}(\ell,n,m+1,\varnothing,0,\mathbf{c}) = \mathscr{W}(\ell,n,m+1,\mathbf{c}),\]
    which is a direct consequence of the definitions.
    
    The statement that we want to prove is therefore equivalent to showing that
    \[ |\mathscr{W}(\ell,n,m+1,\mathbf{c})| =\sum_{j=0}^{n}\sum_{u=0}^j\sum_{i=0}^\ell\sum_{\substack{A\in \binom{[n]}{j}\\ \sum_{a\in A} c_a=i}}(-1)^{j-u} |\widehat{\mathscr{W}}(\ell,n,m+1,A,j-u,\mathbf{c})|.\]
    
    This equality is rather a consequence of an inclusion-exclusion argument that we explain now. In what follows, whenever $\mathscr{S}$ is a set, we use the notation $\mathscr{S}[x]$ to denote $0$ or $1$ according to whether $x\notin \mathscr{S}$ or $x\in \mathscr{S}$ respectively. 
    
    If $(\sigma,w)\in \mathscr{W}(\ell,n,m+1,\mathbf{c})$, then $(\sigma,w)$ is present only in the set $\widehat{\mathscr{W}}(\ell,n,m+1,\varnothing,0,\mathbf{c})$, which corresponds to the case in which $u = j = 0$, and appears with a plus sign.
    
    On the contrary, let us fix some weighted permutation in $\mathfrak{S}_n$, with $m+1$ cycles and total weight $\ell$ but is not $\mathbf{c}$-compatible; in other words, $(\sigma,w)\notin \mathscr{W}(\ell,n,m+1,\mathbf{c})$. Let $\mathfrak{c}_1,\dots,\mathfrak{c}_b$ be the cycles  $\mathfrak{c}\in C(\sigma)$ for which $w(\mathfrak{c})\geq \sum_{h\in \mathfrak{c}} c_h$. For $B\subseteq [b]$, let $A_B:= \bigcup_{s\in B} \mathfrak{c}_s$ (where we are regarding the cycles $\mathfrak{c}_s$ as sets). Then $(\sigma,w)$ is contained in precisely the sets $\widehat{\mathscr{W}}(\ell,n,m+1,A_B,|B|,\mathbf{c})$ for $B\subseteq [b]$. Therefore,
    \begin{align*}
        &\sum_{j=0}^{n}\sum_{u=0}^j\sum_{i=0}^\ell\sum_{\substack{A\in \binom{[n]}{j}\\ \sum_{a\in A} c_a=i}}(-1)^{j-u}\cdot \widehat{\mathscr{W}}(\ell,n,m+1,A,j-u,\mathbf{c})[(\sigma,w)]\\
        &=\sum_{B\subseteq [b]} (-1)^{|B|}\cdot \widehat{\mathscr{W}}(n,m+1,\ell,A_B,|B|,\mathbf{c})[(\sigma,w)]\\
        &=\sum_{B\subseteq  [b]}(-1)^{|B|}=0.
    \end{align*}
    Hence, the proof is complete.
\end{proof}

\section{Ehrhart coefficients}\label{sec:4}

In this section we will prove Theorem~\ref{main} and then comment briefly about a conjecture that arose in the study of these polynomials.

\subsection{The proof}

We start with a Lemma that will be used later.

\begin{lem}\label{lem:coeff-of-binomial}
    The following equality holds
        \[ [t^m] \binom{t(k-v)+n-1-j}{n-1} =\frac{1}{(n-1)!} \, (k-v)^m P_{-j+1,n-1-j}^{n-1-m}. \]
\end{lem}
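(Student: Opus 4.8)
The plan is to expand the binomial coefficient, viewed as a polynomial in $t$, into a product of $n-1$ linear factors and then read off the coefficient of $t^m$ directly as an elementary symmetric function. Writing $a = k-v$ for brevity, the definition of the binomial coefficient gives
\[ \binom{t(k-v)+n-1-j}{n-1} = \frac{1}{(n-1)!}\prod_{r=0}^{n-2}\bigl(at + (n-1-j) - r\bigr), \]
a product of $n-1$ linear-in-$t$ factors scaled by $1/(n-1)!$.

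The first key step is to identify the constant terms of these factors. As $r$ ranges over $0,\dots,n-2$, the quantity $(n-1-j)-r$ runs through exactly the integers from $n-1-j$ down to $(n-1-j)-(n-2) = -j+1$; these are precisely the $n-1$ integers in the interval $[-j+1,\,n-1-j]$. Hence I would rewrite the product as $\prod_{i=-j+1}^{n-1-j}(at+i)$, and note that the interval $[-j+1,\,n-1-j]$ is exactly the one indexing the symmetric polynomial $P_{-j+1,n-1-j}^{s}$ appearing on the right-hand side.

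The second step is to extract $[t^m]$. Expanding $\prod_{i=-j+1}^{n-1-j}(at+i)$, a monomial $t^m$ arises by selecting $m$ of the factors to contribute their $at$ term and letting the remaining $n-1-m$ factors contribute their constants $i$. Summing over all such choices,
\[ [t^m]\prod_{i=-j+1}^{n-1-j}(at+i) = a^m\sum_{\substack{T\subseteq[-j+1,n-1-j]\\ |T|=n-1-m}}\ \prod_{i\in T} i = a^m\, P_{-j+1,n-1-j}^{n-1-m}, \]
where the reindexing uses that choosing the $m$ factors contributing $at$ is the same as choosing the complementary $(n-1-m)$-subset $T$ of constant terms, and the resulting sum is by definition $P_{-j+1,n-1-j}^{n-1-m}$. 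Dividing by $(n-1)!$ and substituting back $a=k-v$ then yields the claimed identity.

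I do not expect a genuine obstacle here; the only point requiring care is the bookkeeping of the index ranges—verifying that the $n-1$ constant terms of the linear factors coincide exactly with the integers of $[-j+1,\,n-1-j]$, and that selecting $m$ of the factors for the $t$-term leaves precisely the degree-$(n-1-m)$ elementary symmetric polynomial in those same constants. Once these ranges are matched correctly, the identity is immediate.
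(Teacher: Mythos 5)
Your proof is correct and follows essentially the same route as the paper's: expand the binomial coefficient as $\frac{1}{(n-1)!}$ times a product of $n-1$ linear factors in $t$ whose constant terms are exactly the integers in $[-j+1,\,n-1-j]$, then read off $[t^m]$ as $(k-v)^m$ times the elementary symmetric polynomial $P_{-j+1,n-1-j}^{n-1-m}$. The paper states this more tersely, but the argument and bookkeeping are identical.
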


\begin{proof}
    By definition we have that
    \[\binom{t(k-v)+n-1-j}{n-1} = \frac{1}{(n-1)!} \prod_{s=1}^{n-1} \left(t(k-v)+n-j-s\right).\]
    Therefore, apart from the factor $\frac{1}{(n-1)!}$, we see that the coefficient of degree $m$ consists of the product between $(k-v)^m$ and the sum of all products of $n-m-1$ numbers chosen in the interval of integers $[-j+1, n-1-j]$, namely $P_{-j+1,n-1-j}^{n-1-m}$. 
\end{proof}

We are now ready to prove Theorem~\ref{main}.

\begin{teo}
    Let $\mathbf{c}=(c_1,\dots,c_n)\in \mathbb{Z}_{>0}^n$. Let $0\leq m\leq n-1$. The coefficient of $t^m$ in $\ehr(\mathscr{R}_{k,\mathbf{c}},t)$ is given by
    \[
    [t^m]\ehr(\mathscr{R}_{k,\mathbf{c}},t) = \frac{1}{(n-1)!}\sum_{\ell = 0}^{k-1}W(\ell,n,m+1,\mathbf{c})A(m,k-\ell-1).
    \]
\end{teo}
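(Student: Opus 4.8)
The plan is to combine the explicit formula of Theorem~\ref{thm:first-formula} with the counting formula of Theorem~\ref{thm:technical}, the bridge between them being the Worpitzky identity for Eulerian numbers.

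First I would extract the coefficient of $t^m$ directly from Theorem~\ref{thm:first-formula}. Applying Lemma~\ref{lem:coeff-of-binomial} term by term gives
\[
[t^m]\ehr(\mathscr{R}_{k,\mathbf{c}},t) = \frac{1}{(n-1)!}\sum_{j=0}^{k-1}(-1)^j P_{-j+1,n-1-j}^{n-1-m}\sum_{v=0}^{k-1}(k-v)^m\rho_{\mathbf{c},j}(v).
\]
Since each $c_i\geq 1$, one has $\rho_{\mathbf{c},j}(v)=0$ whenever $j>v$, so nothing changes if the outer sum is extended to range over all $0\leq j\leq n$; this matches the range of $j$ appearing in Theorem~\ref{thm:technical}.

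Next I would expand the right-hand side of the target equality by inserting the formula for $W(\ell,n,m+1,\mathbf{c})$ from Theorem~\ref{thm:technical} and interchanging the orders of summation so that the factor $(-1)^j P_{-j+1,n-1-j}^{n-1-m}$ is pulled to the front of both expressions. As this factor and the function $\rho_{\mathbf{c},j}$ are common to the two sides, the whole statement reduces to verifying, for each fixed $j$, the identity
\[
\sum_{v=0}^{k-1}(k-v)^m\rho_{\mathbf{c},j}(v) = \sum_{\ell=0}^{k-1}A(m,k-\ell-1)\sum_{i=0}^{\ell}\rho_{\mathbf{c},j}(i)\binom{m+\ell-i}{m}.
\]
To establish this I would swap the summation order on the right, summing first over $\ell$ with $i$ held fixed. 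Writing $p=\ell-i$ collects the coefficient of $\rho_{\mathbf{c},j}(i)$ into $\sum_{p\geq 0}A(m,k-i-1-p)\binom{m+p}{m}$, so it remains to check, with $N:=k-i$, the purely numerical identity
\[
N^m=\sum_{p\geq 0}A(m,N-1-p)\binom{m+p}{m}.
\]
Re-indexing by $q=N-1-p$ and using the symmetry $A(m,q)=A(m,m-1-q)$ of the Eulerian numbers rewrites the right-hand side as $\sum_{q}A(m,q)\binom{N+q}{m}$, which is precisely the Worpitzky identity $N^m=\sum_{q=0}^{m-1}A(m,q)\binom{N+q}{m}$. Matching $v=i$ on the two sides then yields the fixed-$j$ identity, and hence the theorem.

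The only delicate points are bookkeeping: justifying the extensions of the summation ranges (using that $A(m,q)$ vanishes for $q\geq m$ and that the binomials $\binom{m+\ell-i}{m}$ and $\binom{N+q}{m}$ vanish outside the relevant windows), and confirming that the term-by-term reduction in $j$ is legitimate because the coefficients $(-1)^jP_{-j+1,n-1-j}^{n-1-m}$ agree on both sides. I expect the index juggling in the passage to the Worpitzky identity to be the main obstacle; once that reduction is in place, the proof is complete.
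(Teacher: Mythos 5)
Your proposal is correct, and it shares the paper's skeleton: extract $[t^m]$ from Theorem~\ref{thm:first-formula} via Lemma~\ref{lem:coeff-of-binomial}, extend the $j$-sum to $n$ using $\rho_{\mathbf{c},j}(v)=0$ for $j>v$, and bring in Theorem~\ref{thm:technical} to identify the answer with the $W(\ell,n,m+1,\mathbf{c})$-sum. Where you diverge is the finishing move. The paper stays at the level of formal power series: it writes the coefficient as $\frac{1}{(n-1)!}[x^k]\bigl(F_{n,m}(x)G_m(x)\bigr)$, uses the Eulerian generating-function identity $G_m(x)=\frac{1}{(1-x)^{m+1}}\sum_i A(m,i)x^{i+1}$, and recognizes $\frac{F_{n,m}(x)}{(1-x)^{m+1}}$ as $\sum_\ell W(\ell,n,m+1,\mathbf{c})x^\ell$ by Theorem~\ref{thm:technical}, so the claimed formula appears as a convolution of coefficients. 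You instead work entirely with finite sums: you reduce to a per-$j$ identity and close it with Worpitzky's identity $N^m=\sum_q A(m,q)\binom{N+q}{m}$ together with the symmetry $A(m,q)=A(m,m-1-q)$. These two endgames are mathematically equivalent --- Worpitzky plus the Eulerian symmetry is exactly the coefficient-level form of the identity for $G_m(x)$ --- so the difference is one of packaging: your route avoids formal power series at the cost of the index bookkeeping you flag, while the paper's convolution viewpoint makes the role of $(1-x)^{-(m+1)}$ (and hence of the binomial factor $\binom{m+\ell-i}{m}$ inside Theorem~\ref{thm:technical}) transparent. One small caveat: your range-extension step (``$A(m,q)=0$ for $q\geq m$'') and the symmetry both degenerate when $m=0$, since $A(0,0)=1$; that case should be checked directly (both sides of the per-$j$ identity reduce to $\sum_{i=0}^{k-1}\rho_{\mathbf{c},j}(i)$, so it is immediate), after which your argument is complete.
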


\begin{proof}
    By using the formula that we obtained in Theorem~\ref{thm:first-formula} and Lemma~\ref{lem:coeff-of-binomial}, we have the following chain of equalities:
    \begin{align*}
        [t^m]\ehr(\mathscr{R}_{k,\mathbf{c}},t) &= \sum_{j=0}^{k-1}(-1)^j\sum_{v=0}^{k-1} [t^m]\binom{t(k-v)+n-1-j}{n-1} \rho_{\mathbf{c},j}(v)\\
        &=\frac{1}{(n-1)!}\sum_{j=0}^{k-1}\sum_{v=0}^{k-1}(-1)^{j} \,(k-v)^m\, P^{n-1-m}_{-j+1,n-1-j}\, \rho_{\mathbf{c},j}(v)\\
        &=\frac{1}{(n-1)!}\sum_{v=0}^k (k-v)^m\, \sum_{j=0}^{k-1}(-1)^j\, P^{n-1-m}_{-j+1,n-1-j} \,\rho_{\mathbf{c},j}(v)\\
        &=\frac{1}{(n-1)!}\sum_{v=0}^k (k-v)^m\, \sum_{j=0}^{n}(-1)^j\, P^{n-1-m}_{-j+1,n-1-j} \,\rho_{\mathbf{c},j}(v),
    \end{align*}
    where in the second to last step we changed the summation order and the upper limit for $v$ because for $v=k$ we are just adding a zero due to the factor $(k-v)^m$, and in the last step we changed the upper limit for $j$, as $\rho_{\mathbf{c},j}(v)$ is zero whenever $j\geq k$, as $v$ is always at most $k$ and the $c_i$'s are strictly positive. Let us introduce the following notation:
    \begin{align*}
        F_{n,m}(x) &:= \sum_{v=0}^{\infty} \left( \sum_{j=0}^{n}(-1)^j P^{n-1-m}_{-j+1,n-1-j} \rho_{\mathbf{c},j}(v)\right) x^v,\\
        G_{m}(x) &:= \sum_{v = 0}^{\infty} v^m \cdot x^v.
    \end{align*}
    Using these names, the preceding chain of equalities reduces to
    \begin{equation}\label{eq:help1} [t^m]\ehr(\mathscr{R}_{k,\mathbf{c}},t) = \frac{1}{(n-1)!}[x^k] \left(F_{n,m}(x)\cdot G_{m}(x)\right).\end{equation}
    
    It is well known that $G_m(x)$ satisfies the following identity:
    \begin{equation}\label{eq:help2}
    G_m(x) = \frac{1}{(1-x)^{m+1}} \sum_{i=0}^{m} A(m,i) x^{i+1},\end{equation}
    where $A(m,i)$ is an Eulerian number (see \cite[p. 40]{stanley-combinatorics1} for instance). On the other hand, notice that
    \begin{align}
        \frac{1}{(1-x)^{m+1}} F_{n,m}(x) &= \sum_{\ell=0}^{\infty} \left(\sum_{i=0}^{\ell}  \sum_{j=0}^{n}(-1)^j P^{n-1-m}_{-j+1,n-1-j} \rho_{\mathbf{c},j}(i) \binom{m+\ell-i}{m}\right) x^{\ell}\nonumber \\
        &= \sum_{\ell = 0}^{\infty} W(\ell,n,m+1,\mathbf{c}) x^{\ell}\label{eq:help3},
    \end{align}
    where in the last step we used Theorem~\ref{thm:first-formula}. Now the equality of the statement follows from equations \eqref{eq:help1}, \eqref{eq:help2} and \eqref{eq:help3}, just by writing
    \begin{align*} [t^m]\ehr(\mathscr{R}_{k,\mathbf{c}},t) &=\frac{1}{(n-1)!} [x^k] \left(\sum_{\ell=0}^{\infty} W(\ell,n,m+1,\mathbf{c}) x^{\ell} \cdot \sum_{i=1}^m A(m,i-1) x^i\right)\\
    &= \frac{1}{(n-1)!}\sum_{\ell = 0}^{k-1}W(\ell,n,m+1,\mathbf{c})A(m,k-\ell-1).\qedhere
    \end{align*}
\end{proof}

\begin{rem}
    If we change the last coordinate of $\mathbf{c}=(c_1,\ldots,c_{n-1},c_n)$ by $\mathbf{c}'=(c_1,\ldots,c_{n-1},c_n+1)$, it is immediate by definition that $\mathscr{W}(\ell,n,m+1,\mathbf{c})\subseteq \mathscr{W}(\ell,n,m+1,\mathbf{c}')$. More generally, by induction it follows that whenever the vector $\mathbf{c}'-\mathbf{c}$ has nonnegative coefficients, one has that \[\mathscr{W}(\ell,n,m+1,\mathbf{c}) \subseteq \mathscr{W}(\ell,n,m+1,\mathbf{c}').\]
    Due to the preceding result, this monotonicity property has a counterpart for the Ehrhart polynomials.
\end{rem}

\begin{cor}
    If $\mathbf{c},\mathbf{c}'\in \mathbb{Z}^n_{>0}$ are vectors such that $\mathbf{c}'-\mathbf{c}\in \mathbb{Z}^n_{\geq 0}$, then
        \[ \ehr(\mathscr{R}_{k,\mathbf{c}},t) \preceq \ehr(\mathscr{R}_{k,\mathbf{c}'},t),\]
    where $\preceq$ denotes coefficient-wise inequality.
\end{cor}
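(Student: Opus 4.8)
The plan is to combine the monotonicity of the weighted‐permutation counts with the coefficient formula of Theorem~\ref{main}. The immediately preceding Remark already records the inclusion $\mathscr{W}(\ell,n,m+1,\mathbf{c}) \subseteq \mathscr{W}(\ell,n,m+1,\mathbf{c}')$ whenever $\mathbf{c}'-\mathbf{c}\in\mathbb{Z}^n_{\geq 0}$, which holds because the compatibility inequality $w(\mathfrak{c}) < \sum_{i\in\mathfrak{c}} c_i$ only becomes easier to satisfy when the entries of $\mathbf{c}$ are increased. Passing to cardinalities yields the scalar inequality $W(\ell,n,m+1,\mathbf{c}) \leq W(\ell,n,m+1,\mathbf{c}')$ for every $\ell$ and every $0\leq m\leq n-1$, and this is the only nontrivial input needed.

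First I would treat the generic case $0 < k < c_1+\cdots+c_n$ (which by $\mathbf{c}'-\mathbf{c}\in\mathbb{Z}^n_{\geq 0}$ also forces $k < c_1'+\cdots+c_n'$), where both polytopes are $(n-1)$-dimensional and Theorem~\ref{main} applies verbatim to each. Fixing $m$, the formula expresses $[t^m]\ehr(\mathscr{R}_{k,\mathbf{c}},t)$ as $\frac{1}{(n-1)!}\sum_{\ell} W(\ell,n,m+1,\mathbf{c})\,A(m,k-\ell-1)$, and since the Eulerian numbers $A(m,k-\ell-1)$ are nonnegative, replacing $\mathbf{c}$ by $\mathbf{c}'$ can only increase each summand. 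Summing over $\ell$ gives $[t^m]\ehr(\mathscr{R}_{k,\mathbf{c}},t) \leq [t^m]\ehr(\mathscr{R}_{k,\mathbf{c}'},t)$ for every $m$, which is exactly the asserted coefficient-wise inequality $\preceq$.

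The remaining care goes into the boundary values of $k$, where Theorem~\ref{main} does not directly apply. If $k > c_1+\cdots+c_n$ then $\mathscr{R}_{k,\mathbf{c}}$ is empty and $\ehr(\mathscr{R}_{k,\mathbf{c}},t) = 0$, while if $k = c_1+\cdots+c_n$ the polytope collapses to a single point and $\ehr(\mathscr{R}_{k,\mathbf{c}},t)=1$. In either situation the right-hand polytope $\mathscr{R}_{k,\mathbf{c}'}$ is itself either empty, a single point, or genuinely $(n-1)$-dimensional; in all three cases its Ehrhart polynomial has nonnegative coefficients and constant term equal to $0$ or $1$ (the genuinely $(n-1)$-dimensional case by the Ehrhart positivity of Theorem~\ref{thm:ehrhart-positivity}, together with the fact that the constant term of any lattice polytope is $1$), so it dominates both $0$ and $1$ coefficient-wise. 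This settles $\preceq$ in the boundary cases as well. I do not anticipate a genuine obstacle here: the substantive inequality is an immediate consequence of the Remark combined with the manifest nonnegativity of every factor appearing in Theorem~\ref{main}, and the only point demanding vigilance is precisely this bookkeeping over the range of $k$ relative to $c_1+\cdots+c_n$ and $c_1'+\cdots+c_n'$.
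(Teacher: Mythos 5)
Your proof is correct and follows exactly the paper's intended argument: the inclusion $\mathscr{W}(\ell,n,m+1,\mathbf{c})\subseteq\mathscr{W}(\ell,n,m+1,\mathbf{c}')$ from the preceding Remark, combined with the formula of Theorem~\ref{main} and the nonnegativity of the Eulerian numbers, gives the coefficient-wise inequality. Your additional bookkeeping for the boundary values of $k$ is harmless and slightly more careful than the paper, which treats the corollary as immediate in the generic case.
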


\subsection{A unit-circle-rootedness conjecture}

While verifying computationally the main results of the present paper, the following intriguing problem, that we pose here as a conjecture, arose.

\begin{conj}
    For each $\mathbf{c}=(c_1,\ldots,c_n)\in\mathbb{Z}_{>0}^n$ and each $0\leq m\leq n - 1$, the polynomial defined by
        \[ p_{n,m,\mathbf{c}}(x) = \sum_{\ell = 0}^{\infty} W(\ell,n,m+1,\mathbf{c})\, x^{\ell},\]
    has all of its complex roots lying on the unit circle $|z| = 1$.
\end{conj}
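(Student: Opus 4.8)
The plan is to reduce the conjecture to a bounded real-rootedness statement by first exploiting the self-reciprocal structure of $p_{n,m,\mathbf{c}}$, and then to attack the resulting real-rootedness through the multiplicative cycle structure of the weighted permutations that underlie the numbers $W(\ell,n,m+1,\mathbf{c})$.

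First I would show that $p_{n,m,\mathbf{c}}(x)$ is palindromic of degree $d = c_1+\cdots+c_n-(m+1)$. This is immediate from the combinatorics: fix $\sigma\in\mathfrak{S}_n$ with exactly $m+1$ cycles. The $\mathbf{c}$-compatible weights on $\sigma$ are precisely the maps $w:C(\sigma)\to\mathbb{Z}_{\geq 0}$ with $w(\mathfrak{c})\in\{0,1,\ldots,M_{\mathfrak{c}}\}$, where $M_{\mathfrak{c}}:=\sum_{i\in\mathfrak{c}}c_i-1$; these form a product of intervals with $\sum_{\mathfrak{c}}M_{\mathfrak{c}}=d$. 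The componentwise complementation $w(\mathfrak{c})\mapsto M_{\mathfrak{c}}-w(\mathfrak{c})$ is an involution on this set carrying total weight $\ell$ to $d-\ell$. Summing over all such $\sigma$ gives $W(\ell,n,m+1,\mathbf{c})=W(d-\ell,n,m+1,\mathbf{c})$, i.e. $x^d p_{n,m,\mathbf{c}}(1/x)=p_{n,m,\mathbf{c}}(x)$. For a real palindromic polynomial, all roots lie on $|z|=1$ if and only if its Chebyshev transform $g_{n,m,\mathbf{c}}$ — defined by $p_{n,m,\mathbf{c}}(x)=x^{d/2}g_{n,m,\mathbf{c}}(x+x^{-1})$ when $d$ is even, and obtained after dividing out the forced root at $x=-1$ when $d$ is odd — is real-rooted with all roots in the interval $[-2,2]$. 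So the conjecture is equivalent to this bounded real-rootedness.

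Next I would record the structure that trivializes the extreme cases and should drive the general argument. Writing $[s]_x:=1+x+\cdots+x^{s-1}$, the cycle-by-cycle independence of weights yields the factored generating function
\[ \sum_{m\geq 0} p_{n,m,\mathbf{c}}(x)\,q^{m+1} \;=\; \sum_{\sigma\in\mathfrak{S}_n} q^{|C(\sigma)|}\prod_{\mathfrak{c}\in C(\sigma)}\Bigl[\,{\textstyle\sum_{i\in\mathfrak{c}}c_i}\,\Bigr]_x, \]
so $p_{n,m,\mathbf{c}}$ is the coefficient of $q^{m+1}$. When $m=n-1$ this forces $\sigma=\mathrm{id}$ and $p_{n,n-1,\mathbf{c}}=\prod_{i=1}^n[c_i]_x$, while $m=0$ forces $\sigma$ to be an $n$-cycle and $p_{n,0,\mathbf{c}}=(n-1)!\,[c_1+\cdots+c_n]_x$; in both cases $p$ is a product of polynomials $[s]_x$, whose roots are nontrivial $s$-th roots of unity, so the conjecture holds. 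Peeling off the cycle containing the element $n$ moreover produces the recurrence
\[ p_{n,m,\mathbf{c}}(x) \;=\; \sum_{S\ni n}(|S|-1)!\,\Bigl[\,{\textstyle\sum_{i\in S}c_i}\,\Bigr]_x\; p_{n-|S|,\,m-1,\,\mathbf{c}_{\overline{S}}}(x), \]
where $\mathbf{c}_{\overline{S}}$ is the restriction of $\mathbf{c}$ to $[n]\smallsetminus S$ and the second factor is the analogous polynomial on that ground set; incrementing a single coordinate of $\mathbf{c}$ gives a similar additive update.

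The hard part will be establishing bounded real-rootedness of $g_{n,m,\mathbf{c}}$ for the intermediate values $0<m<n-1$, where $p_{n,m,\mathbf{c}}$ is genuinely not a product of polynomials $[s]_x$ and has roots that are non-roots-of-unity on the circle; for instance $p_{3,1,(2,1,1)}=3+4x+3x^2$ has roots $\tfrac{-2\pm i\sqrt{5}}{3}$, and $p_{3,1,(3,1,1)}=(1+x)(3x^2+x+3)$. The obstruction is that both the recurrence and the coordinate-increment operation express $p_{n,m,\mathbf{c}}$ as a $\mathbb{Z}_{\geq 0}$-linear combination — not a product — of smaller polynomials that individually have unit-circle roots, and such sums do not preserve root location without extra interlacing control. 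I would pursue two routes. The first is to upgrade the recurrence to a statement about interlacing families, showing that the Chebyshev transforms $g_{n-|S|,m-1,\mathbf{c}_{\overline{S}}}$ interlace compatibly and that multiplication by $[s]_x$ acts on the Chebyshev side as multiplication by a real polynomial with roots in $(-2,2)$, so that the weighted sum stays real-rooted in $[-2,2]$. The second, suggested by the $u=2\cos\theta$ form of the roots, is to realize $g_{n,m,\mathbf{c}}$ as the characteristic polynomial of a symmetric tridiagonal (Jacobi-type) operator assembled from the cycle data, whose spectrum is automatically real and, with the right normalization, confined to $[-2,2]$. I expect the interlacing bookkeeping across the sum over $S$ to be the crux: proving that this particular positive combination of unit-circle-rooted polynomials remains unit-circle-rooted is precisely where a genuinely new idea will be required.
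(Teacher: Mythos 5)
The statement you set out to prove is posed in the paper as an open conjecture; the paper contains no proof of it, so the only question is whether your argument settles it. It does not, and your own closing sentence concedes this. To be clear about what \emph{is} right: your preliminary reductions are correct and verifiable. The complementation involution $w(\mathfrak{c})\mapsto\bigl(\sum_{i\in\mathfrak{c}}c_i-1\bigr)-w(\mathfrak{c})$ does prove that $p_{n,m,\mathbf{c}}$ is palindromic of degree $d=c_1+\cdots+c_n-(m+1)$ (every $\sigma$ with $m+1$ cycles yields the same value of $d$, so the involution is compatible with summing over $\sigma$); the equivalence between unit-circle-rootedness of a real palindromic polynomial and real-rootedness of its Chebyshev transform with roots in $[-2,2]$ is standard; the cycle-factored generating function, the extreme cases $m=0$ and $m=n-1$, the peeling recurrence over the cycle $S\ni n$, and both of your numerical examples ($3+4x+3x^2$ and $(1+x)(3+x+3x^2)$, whose non-real roots indeed have modulus $1$) all check out. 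These observations are genuine partial progress and would be worth recording.

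The gap is that the entire content of the conjecture lies in the range $0<m<n-1$, and there your proposal stops at a reformulation. You reduce the problem to showing that a specific $\mathbb{Z}_{\geq 0}$-linear combination of unit-circle-rooted polynomials (the summands $(|S|-1)!\,[\,\sum_{i\in S}c_i\,]_x\,p_{n-|S|,m-1,\mathbf{c}_{\overline{S}}}$, each palindromic of the same degree $d$) is again unit-circle-rooted, and you correctly note that positive sums do not preserve root location in general. But you then only name two possible mechanisms --- an interlacing-family argument on the Chebyshev side, or a realization of the transform as the characteristic polynomial of a symmetric tridiagonal operator --- without formulating, let alone proving, either one: no common interleaver for the transforms of the summands is exhibited or even conjectured in a checkable form, and no candidate Jacobi operator is constructed. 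Since the sum-over-$S$ step is precisely where, as you say, ``a genuinely new idea will be required,'' what you have is a correct and useful reformulation (palindromicity plus bounded real-rootedness) together with evidence in special cases, not a proof; the conjecture remains exactly as open after your argument as it is in the paper.
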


Notice that although we used the upper limit $\infty$ in the sum, it actually yields a polynomial, as for $\ell > c_1+\cdots+c_n$, one will certainly have $W(\ell,n,m+1,\mathbf{c}) = 0$. A reasonable question is whether some techniques regarding $h^*$-polynomials having roots on the unit circle can be applied to these particular polynomials (notice that they may not be the $h^*$-polynomial of a polytope, as they can have a constant term larger than $1$). See \cite{braun-liu} for results regarding polytopes having a unit-circle-rooted $h^*$-polynomial.

\section{The \texorpdfstring{$h^*$}{h*}-polynomial}\label{sec:5}

Recall that the $h^*$-polynomial of a lattice polytope $\mathscr{P}$ of dimension $d$ is defined as the numerator of the generating function of its Ehrhart polynomial. Namely, it is defined as the only polynomial satisfying
    \[ \sum_{j=0}^{\infty} \ehr(\mathscr{P}, j) \, x^j = \frac{h^*(\mathscr{P},x)}{(1-x)^{d+1}}.\]

A classical result due to Stanley \cite{stanley-hstar} shows that $h^*(\mathscr{P},x)$ is a polynomial with nonnegative integer coefficients and has degree at most $d$.  A basic property of $h^*$-polynomials is that $h^*(\mathscr{P},1)$ is equal to the normalized volume of the polytope $\mathscr{P}$.

In \cite{early} Early conjectured a combinatorial formula for the coefficients of the $h^*$-polynomial of the hypersimplex and, more generally, for the $h^*$-polynomial of the polytopes of the form $\mathscr{R}_{k,\mathbf{c}}$ where $\mathbf{c}=(r,\ldots,r)\in\mathbb{Z}_{>0}^n$. In \cite{kim} Kim provided a proof of Early's conjectures. The aim of this section is to provide the right generalization of the conjecture posed by Early for \emph{all} the polytopes of the form $\mathscr{R}_{k,\mathbf{c}}$ and then outline a proof that extends Kim's procedure.

\subsection{A review of terminology}

In the same way that the $\mathbf{c}$-compatible weighted permutations play a key role in the description of the coefficients of the Ehrhart polynomial of $\mathscr{R}_{k,\mathbf{c}}$, for the $h^*$-polynomial we have to introduce a different object, already studied in both \cite{early} and \cite{kim} and endow it with an extra condition that we call again ``$\mathbf{c}$-compatibility''.

Recall that a \emph{cyclically ordered partition} of $[n]$ is a partition of $[n]$ into disjoint blocks that are ordered cyclically. For example, the partitions of $[5]$ given by $(\{1,2\}, \{3,5\}, \{4\})$ and $(\{3,5\}, \{4\}, \{1,2\})$ are considered as equal. The set of blocks of such a partition $\xi$ will be customarily denoted by $P(\xi)$. 


\begin{defi}
    A \emph{decorated ordered set partition} of type $(k,n)$ consists of a cyclically ordered partition $\xi$ of $[n]$ and a function $w:P(\xi) \to \mathbb{Z}_{\geq 0}$ such that 
        \[ \sum_{\mathfrak{p}\in P(\xi)} w(\mathfrak{p}) = k.\]
        
    For a vector $\mathbf{c}=(c_1,\ldots,c_n)\in\mathbb{Z}_{>0}^n$, we say that a decorated ordered set partition $\xi$ is \emph{$\mathbf{c}$-compatible} if
        \[ w(\mathfrak{p}) < \sum_{i\in\mathfrak{p}} c_i\]
    for all $\mathfrak{p}\in P(\xi)$.
\end{defi}
A decorated ordered set partition of type $(k,n)$, $\xi=(L_1,\ldots,L_m)$, can be represented as a set of $k$ points in a circle ordered in a clockwise fashion. The blocks $L_i$ are placed among these points in such a way that the clockwise distance between $L_i$ and $L_{i+1}$ is $w(L_i)$. Observe that this only depends on the cyclic order and not in the particular choice of the first block $L_1$.

Notice that in the case in which $\mathbf{c}=(1,\ldots,1)$ the notion of being $\mathbf{c}$-compatible was named as ``hypersimplicial'' in \cite{early}, whereas in the case $\mathbf{c}=(r,\ldots,r)$ it was called ``$r$-hypersimplicial'' in \cite{kim}.

\begin{defi}
    The \emph{winding number} of a decorated ordered set partition of type $(k,n)$ is defined as the only number $d$ such that
        \[ dk = \lambda_1 + \cdots + \lambda_n,\]
    where $\lambda_i$ denotes the clockwise distance between the block containing $i$ and the block containing $i+1$ (modulo $n$).
\end{defi}

Having set all these names and notations, we are ready to state the main result of this section. The proof is carried out in the next subsection.

\begin{teo}
    Let $\mathbf{c}=(c_1,\ldots,c_n)\in\mathbb{Z}_{>0}^n$ and $0<k<c_1+\cdots+c_n$. For each $0\leq i\leq n$, the coefficient $[x^i]h^*(\mathscr{R}_{k,\mathbf{c}},x)$ equals the number of $\mathbf{c}$-compatible decorated ordered set partitions of type $(k,n)$ having winding number $i$.
\end{teo}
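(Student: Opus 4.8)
The plan is to establish a generating function identity that matches the $h^*$-polynomial of $\mathscr{R}_{k,\mathbf{c}}$ against the weighted count of $\mathbf{c}$-compatible decorated ordered set partitions graded by winding number. Concretely, the strategy is to compute the bivariate generating function
\[
\sum_{t\geq 0} \ehr(\mathscr{R}_{k,\mathbf{c}},t)\, x^t = \frac{h^*(\mathscr{R}_{k,\mathbf{c}},x)}{(1-x)^{n}},
\]
since $\dim \mathscr{R}_{k,\mathbf{c}} = n-1$, and to show that the numerator on the right agrees with the winding-number enumerator of the combinatorial objects. The natural approach, following Kim, is to fix the decorated-ordered-set-partition model, encode each such object as a lattice point in a cone (or a half-open parallelepiped associated to a unimodular triangulation of $\mathscr{R}_{k,\mathbf{c}}$), and read off the $h^*$-coefficient as the statistic counting the "height" of the corresponding cone generator. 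Since $\mathscr{R}_{k,\mathbf{c}}$ is an alcoved polytope (as recalled in Section~\ref{sec:2}), it possesses a regular unimodular triangulation by Lam--Postnikov, and the $h^*$-polynomial is exactly the $h$-polynomial of this triangulation; the combinatorial task is to identify the faces of the triangulation with decorated ordered set partitions so that the shelling statistic becomes the winding number.

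First I would set up the explicit bijection between the simplices (or half-open cells) of the alcoved triangulation of $t\mathscr{R}_{k,\mathbf{c}}$ and the $\mathbf{c}$-compatible decorated ordered set partitions of type $(k,n)$, making sure that the compatibility condition $w(\mathfrak{p}) < \sum_{i\in\mathfrak{p}} c_i$ is precisely what cuts out the prism constraint $x_i \leq c_i$. In the case $\mathbf{c}=(1,\dots,1)$ this recovers the hypersimplicial decorated partitions of Early, and for $\mathbf{c}=(r,\dots,r)$ it recovers Kim's $r$-hypersimplicial objects, so the main content is verifying that the general capacity vector $\mathbf{c}$ only modifies the upper bound on each block's weight and nothing else in the bijection. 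Second, I would check that under this correspondence the winding number, defined via $dk = \lambda_1 + \cdots + \lambda_n$ in terms of the clockwise distances $\lambda_i$, coincides with the degree contribution to the $h^*$-polynomial (equivalently, the number of descents or the box-height in the half-open decomposition). This is where Kim's argument is applied essentially verbatim: the circular placement of $k$ points with the blocks inserted at clockwise distances $w(L_i)$ is exactly the geometric encoding of the torus quotient used to count lattice points, and the winding number records how many times the path wraps around.

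The hard part will be pinning down the right definitions so that Kim's techniques transfer with minimal friction; as the authors themselves note, once the correct notion of $\mathbf{c}$-compatibility and winding number is in place, the proof is fairly direct. Concretely, the main obstacle is ensuring that the $\mathbf{c}$-compatibility inequality interacts correctly with the cyclic structure: in the uniform case $\mathbf{c}=(r,\dots,r)$ the bound $w(\mathfrak{p}) < r|\mathfrak{p}|$ depends only on block sizes, but for general $\mathbf{c}$ the bound $\sum_{i\in\mathfrak{p}} c_i$ depends on \emph{which} elements lie in each block, and I must verify that the lattice-point count of each cone cell still factors so that the generating function telescopes as in the uniform setting. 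Once the bijection respects both the compatibility constraint and the winding statistic, I would finish by summing over all cells: the contribution of winding number $i$ assembles into $[x^i]h^*(\mathscr{R}_{k,\mathbf{c}},x)$ by the standard identity relating the $h^*$-polynomial to the $h$-vector of a unimodular triangulation, and comparing with the combinatorial enumerator completes the proof. A sanity check throughout is that setting $x=1$ must recover the normalized volume (the flag Eulerian number), which provides an independent verification of the bijection's correctness via Corollary~\ref{coromain}.
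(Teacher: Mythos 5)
Your strategy is genuinely different from the paper's, and the step you defer is exactly the hard one. The paper never touches the Lam--Postnikov triangulation in this proof: following Kim \cite{kim}, it counts the $\mathbf{c}$-compatible decorated ordered set partitions of winding number $d$ by inclusion--exclusion over ``bad blocks'' (Propositions~\ref{kim2.11}, \ref{chi-sum} and \ref{kim2.22}, together with Lemma~\ref{CorrespondenceLem}), reduces that count to an alternating sum of the generalized binomial coefficients of Lemma~\ref{lemma:gen-binomial}, and then recognizes the result as
\[
\sum_{i=0}^{d}(-1)^i\binom{n}{i}\ehr(\mathscr{R}_{k,\mathbf{c}},d-i)=[x^d]h^*(\mathscr{R}_{k,\mathbf{c}},x),
\]
using the explicit Ehrhart formula of Theorem~\ref{thm:first-formula} and the definition of the $h^*$-polynomial. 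No bijection with cells of a triangulation and no shelling statistic ever enters the argument; your description of Kim's method as a ``torus quotient'' / cone-generator encoding is a mischaracterization --- his proof is precisely this inclusion--exclusion followed by a formula match.

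The concrete gap: you assert a bijection between the simplices of the alcoved triangulation of $\mathscr{R}_{k,\mathbf{c}}$ and the $\mathbf{c}$-compatible decorated ordered set partitions under which the shelling (box-height) statistic becomes the winding number, and you claim the only issue is adjusting the upper bounds on block weights. But no such bijection is known even in the base case $\mathbf{c}=(1,\ldots,1)$: the triangulation/shelling route for the hypersimplex (Li \cite{li}) yields a different statistic (descents of permutations with fixed excedance number), and reconciling that statistic with Early's winding number \cite{early} is itself a nontrivial problem --- which is why that conjecture stood until Kim's non-geometric proof. So your plan rests on an unconstructed bijection whose compatibility with the two statistics is essentially the entire content of the theorem; the sanity check at $x=1$ only confirms equality of cardinalities (the volume statement), not the refined, coefficient-by-coefficient statement. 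A minimal repair is to abandon the triangulation: define the bad-block sets $I_{\mathbf{c}}(\xi)$ and $K_{\mathbf{c}}(S)$, prove the analogues of Kim's Propositions 2.11, 2.17 and 2.22 (this is where the element-dependent bounds $\sum_{i\in\mathfrak{p}}c_i$, as opposed to bounds depending only on $|\mathfrak{p}|$, genuinely require care, via the notion of increasing $\mathbf{c}$-packed sequences), and match the resulting alternating sum against Theorem~\ref{thm:first-formula}.
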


\subsection{A proof \`a la Kim}

We will outline a proof of Theorem~\ref{main-hstar}. Some details are omitted as the proofs are carried out verbatim from \cite{kim}; in particular, to improve readability we have decided to use the same (if not, very similar) notation. 

The family of all the partitions of a set $A$ will be henceforth denoted by $\Pi(A)$. Also, we will use the notation
    \[ \binom{n}{a}_b := [x^a] (1+x+x^2+\cdots+x^{b-1})^n,\]
so that in particular when $b=2$, we recover the classical binomial numbers. 

\begin{lem}\label{lemma:gen-binomial}
    The following formula holds
    \[ \binom{n}{a}_b = \sum_{j=0}^{\lfloor\frac{a}{b}\rfloor} (-1)^j \binom{n}{j}\binom{n-1+a-bj}{n-1}.\]
\end{lem}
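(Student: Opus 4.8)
The plan is to prove the identity
\[
\binom{n}{a}_b = \sum_{j=0}^{\lfloor a/b\rfloor} (-1)^j \binom{n}{j}\binom{n-1+a-bj}{n-1}
\]
by a direct generating function manipulation, completely analogous to the computation already carried out in the proof of Theorem~\ref{thm:first-formula}. First I would start from the definition $\binom{n}{a}_b = [x^a](1+x+\cdots+x^{b-1})^n$, and rewrite the polynomial factor using the finite geometric series $1+x+\cdots+x^{b-1} = \frac{1-x^b}{1-x}$. This gives
\[
\binom{n}{a}_b = [x^a]\left(\frac{(1-x^b)^n}{(1-x)^n}\right) = [x^a]\left(\frac{1}{(1-x)^n}\,(1-x^b)^n\right).
\]

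Next I would expand each of the two factors as a power series. By the binomial theorem, $(1-x^b)^n = \sum_{j=0}^{n}(-1)^j\binom{n}{j}x^{bj}$, and by the classical negative-binomial expansion $\frac{1}{(1-x)^n} = \sum_{i=0}^{\infty}\binom{n-1+i}{n-1}x^i$ (the same identity invoked in the proof of Theorem~\ref{thm:first-formula}). Multiplying these two series and extracting the coefficient of $x^a$ forces $i + bj = a$, so $i = a-bj$, giving
\[
\binom{n}{a}_b = \sum_{j\geq 0}(-1)^j\binom{n}{j}\binom{n-1+a-bj}{n-1}.
\]

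Finally I would address the summation range. The term with index $j$ contributes the binomial coefficient $\binom{n-1+a-bj}{n-1}$, which—interpreted via the series expansion of $\frac{1}{(1-x)^n}$—vanishes precisely when the exponent $a-bj$ is negative, i.e.\ when $j > a/b$. Hence only indices $0\leq j\leq \lfloor a/b\rfloor$ survive, matching the stated upper limit. I expect no genuine obstacle here: the only point requiring a word of care is the truncation of the sum, namely confirming that $\binom{n-1+a-bj}{n-1}=0$ for $a-bj<0$ so that extending or restricting the range is harmless, exactly as in the degree bookkeeping of the earlier proof. This is a short, self-contained computation of the same flavor as Theorem~\ref{thm:first-formula}.
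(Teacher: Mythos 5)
Your proposal is correct and follows essentially the same route as the paper: both expand $(1+x+\cdots+x^{b-1})^n$ as $(1-x^b)^n/(1-x)^n$, apply the binomial theorem and the negative-binomial series, and extract the coefficient of $x^a$, with the truncation to $j\leq\lfloor a/b\rfloor$ handled exactly as you describe. No differences worth noting.
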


\begin{proof}
    The proof is a standard argument with generating functions. 
    \begin{align*}
        \binom{n}{a}_b &= [x^a]\left(\sum_{i=0}^{b-1} x^i\right)^n\\
        &= [x^a]\frac{(1-x^b)^n}{(1-x)^n}\\
        &= [x^a]\left(\sum_{j=0}^n (-1)^j \binom{n}{j} x^{bj} \right)\left(\sum_{j=0}^{\infty}\binom{n-1+j}{n-1} x^j\right)\\
        &= \sum_{j=0}^{\lfloor\frac{a}{b}\rfloor} (-1)^j\binom{n}{j}\binom{n-1+a-bj}{n-1}.\qedhere
    \end{align*}
\end{proof}

\begin{defi}
    Let $\xi$ be a decorated ordered set partition of type $(k,n)$ and let $\mathbf{c}=(c_1,\ldots,c_n)\in \mathbb{Z}_{>0}^n$. The set of \emph{$\mathbf{c}$-bad blocks} of $\xi$ is the family
        \[ I_{\mathbf{c}}(\xi) = \left\{ \mathfrak{p}\in P(\xi) : w(\mathfrak{p}) \geq \sum_{i\in \mathfrak{p}} c_i\right\}.\]
    If $S$ is a family of pairwise disjoint subsets of $[n]$, we define
        \[ K_{\mathbf{c}}(S) := \{\xi : I_{\mathbf{c}}(\xi) \supseteq S\},\]
    and 
        \[H_{\mathbf{c}}(T) = \sum_{S\in \Pi(T)} (-1)^{|S|} |K_{\mathbf{c}}(S)|.\]
\end{defi}

\begin{prop}\label{kim2.11}
    The number of $\mathbf{c}$-compatible decorated ordered set partitions of type $(k,n)$ with winding number $d$ is given by 
    \[
    \sum_{T\subseteq [n]} H_{\mathbf{c}}(T).
    \]
\end{prop}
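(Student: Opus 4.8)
The plan is to prove the formula by a single inclusion--exclusion (Möbius-type) argument carried out over the \emph{bad blocks} of a decorated ordered set partition, following Kim \cite{kim}. Throughout, the winding number $d$ is held fixed: all of $K_{\mathbf{c}}(S)$, $H_{\mathbf{c}}(T)$, and the quantity we wish to count refer only to decorated ordered set partitions of type $(k,n)$ whose winding number equals $d$. This is the convention that makes the right-hand side, which carries no visible $d$, meaningful. The starting observation is that $\xi$ is $\mathbf{c}$-compatible exactly when it has no bad blocks, i.e.\ when $I_{\mathbf{c}}(\xi)=\varnothing$.

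First I would expand the right-hand side and interchange the order of summation so that the outer sum runs over individual partitions $\xi$. Using the indicator notation of the paper, write $K_{\mathbf{c}}(S)[\xi]$ for the quantity that is $1$ if $\xi\in K_{\mathbf{c}}(S)$ and $0$ otherwise, so that $K_{\mathbf{c}}(S)[\xi]=1$ precisely when $I_{\mathbf{c}}(\xi)\supseteq S$. Since $|K_{\mathbf{c}}(S)|=\sum_{\xi}K_{\mathbf{c}}(S)[\xi]$, this gives
\[
\sum_{T\subseteq[n]} H_{\mathbf{c}}(T)=\sum_{\xi}\ \sum_{T\subseteq[n]}\ \sum_{S\in\Pi(T)}(-1)^{|S|}\,K_{\mathbf{c}}(S)[\xi].
\]
It then suffices to show that the inner double sum equals $1$ when $I_{\mathbf{c}}(\xi)=\varnothing$ and vanishes otherwise; summing the result over all $\xi$ of winding number $d$ produces exactly the number of $\mathbf{c}$-compatible such $\xi$.

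Next I would fix $\xi$ and identify the contributing pairs $(T,S)$. Because $I_{\mathbf{c}}(\xi)$ is a family of \emph{pairwise disjoint} subsets of $[n]$ (the blocks of $\xi$ flagged as bad), the inclusion $I_{\mathbf{c}}(\xi)\supseteq S$ forces each block of the set partition $S$ to be one of the bad blocks of $\xi$. Hence the pairs $(T,S)$ with $K_{\mathbf{c}}(S)[\xi]=1$ correspond bijectively to the subsets $J\subseteq I_{\mathbf{c}}(\xi)$: a subset $J$ determines $S=J$ and $T=\bigcup_{\mathfrak{p}\in J}\mathfrak{p}$, and this correspondence preserves $|S|=|J|$. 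Writing $b:=|I_{\mathbf{c}}(\xi)|$, the inner double sum collapses to
\[
\sum_{J\subseteq I_{\mathbf{c}}(\xi)}(-1)^{|J|}=\sum_{i=0}^{b}\binom{b}{i}(-1)^{i}=(1-1)^{b},
\]
which equals $1$ for $b=0$ and $0$ for $b\geq 1$. This is precisely the indicator of $\mathbf{c}$-compatibility, which finishes the proof modulo the bookkeeping above.

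The calculation is routine once the framework is in place, so the main obstacle is interpretive rather than computational: one must read $I_{\mathbf{c}}(\xi)\supseteq S$ as an inclusion of \emph{families of subsets}---so that each block of $S$ is required to be an entire bad block, not merely a set of elements lying inside bad blocks---and then verify the bijection between contributing pairs $(T,S)$ and subsets $J$ of the bad-block set. The automatic disjointness of the bad blocks is exactly what guarantees this bijection is well defined and that no pair is overcounted; once that is checked, the vanishing $(1-1)^{b}=0$ for $b\geq 1$ does all the remaining work.
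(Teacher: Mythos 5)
Your proof is correct, and it is essentially the argument the paper itself invokes: the paper's proof of this proposition simply defers \emph{mutatis mutandis} to Kim's Proposition 2.11, whose content is exactly your inclusion--exclusion over the (pairwise disjoint) bad blocks, with the collapse to $\sum_{J\subseteq I_{\mathbf{c}}(\xi)}(-1)^{|J|}=(1-1)^{b}$ doing the work. Your explicit handling of the fixed winding number $d$ and of the family-of-sets reading of $I_{\mathbf{c}}(\xi)\supseteq S$ fills in precisely the details the paper leaves to the citation.
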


\begin{proof}
    The proof follows \emph{mutatis mutandis} from \cite[Proposition 2.11]{kim}.
\end{proof}

From now on, if we consider a cyclically ordered partition $\xi = (L_1,\ldots,L_m)$, we will consider the indices of the blocks modulo $m$. For instance $L_0=L_m$ and $L_{m+2} = L_{2}$.

\begin{defi}
    Let $T\subseteq [n]$ with $n\notin T$. 
    \begin{itemize}
        \item A \emph{$T$-singlet block} is a singleton $\{t\}\subseteq T$. 
        \item A sequence of $T$-singlet blocks $(L_i,\dots,L_{i+j})$ where $L_{i+u}=\{t_{i+u}\}$ occurring in a decorated ordered set partition $\xi$ is \emph{$\mathbf{c}$-packed} if $w(L_{i+u}) = c_{t_{i+u}}$ for all $0\leq u\leq j-1$ and $w(L_{i+j})\geq c_{t_{i+j}}$. 
        \item A $\mathbf{c}$-packed sequence is \emph{increasing $\mathbf{c}$-packed} if $t_i<t_{i+1}<\cdots<t_{i+j}$. 
        \item An increasing $\mathbf{c}$-packed sequence is \emph{maximal} if it is not contained in a larger increasing $\mathbf{c}$-packed sequence.
    \end{itemize}
\end{defi}

\begin{lem}\label{CorrespondenceLem}
Let $S=\{B_1,\dots,B_u\}\in \Pi(T)$ where $n\notin T$ and $T=\{t_1<\cdots<t_m\}$. Write the elements of $B_i$ in increasing order as $B_i=\{t_{j_1}<t_{j_2}<\cdots<t_{j_{u_i}}\}$. Then $K_{\mathbf{c}}(S)$ is in bijection with set of elements of $K_{\mathbf{c}}(\{\{t_1\},\dots,\{t_m\}\})$ that have an increasing $\mathbf{c}$-packed sequence $(\{t_{j_1}\},\{t_{j_2}\},\dots,\{t_{j_{u_i}}\})$ for all $i$.
\end{lem}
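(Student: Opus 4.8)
The plan is to make the correspondence explicit through a \emph{splitting} operation together with its inverse \emph{merging} operation, and then to verify that these are mutually inverse and preserve the relevant statistics. Write $B_i=\{t_{j_1}<\cdots<t_{j_{u_i}}\}$. Given $\xi\in K_{\mathbf{c}}(S)$, each $B_i$ is a block of $\xi$ that is $\mathbf{c}$-bad, i.e.\ $w(B_i)\geq\sum_{a\in B_i}c_a$, and occupies a single arc of the circle. I would define $\Phi(\xi)$ by replacing $B_i$, inside that same arc, with the singletons $\{t_{j_1}\},\dots,\{t_{j_{u_i}}\}$ placed consecutively in this (increasing) order and carrying the weights $w(\{t_{j_\ell}\})=c_{t_{j_\ell}}$ for $1\leq\ell\leq u_i-1$ and $w(\{t_{j_{u_i}}\})=w(B_i)-\sum_{\ell<u_i}c_{t_{j_\ell}}$, doing this simultaneously for every block of $S$ (which is legitimate since $S\in\Pi(T)$, so the $B_i$ are pairwise disjoint).

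First I would verify that $\Phi$ lands in the announced subset. Since $\sum_{\ell=1}^{u_i}w(\{t_{j_\ell}\})=w(B_i)$, the total weight is unchanged and $\Phi(\xi)$ is again of type $(k,n)$. The inequality defining $\mathbf{c}$-badness of $B_i$ is exactly what forces $w(\{t_{j_{u_i}}\})\geq c_{t_{j_{u_i}}}$, while the earlier singletons have $w(\{t_{j_\ell}\})=c_{t_{j_\ell}}$; hence every singleton $\{t_s\}$ is $\mathbf{c}$-bad, so $\Phi(\xi)\in K_{\mathbf{c}}(\{\{t_1\},\dots,\{t_m\}\})$, and by construction the sequence $(\{t_{j_1}\},\dots,\{t_{j_{u_i}}\})$ is an increasing $\mathbf{c}$-packed sequence for each $i$.

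Next I would construct the inverse $\Psi$: given $\eta$ in the target subset, for each $i$ I locate the prescribed increasing $\mathbf{c}$-packed sequence $(\{t_{j_1}\},\dots,\{t_{j_{u_i}}\})$ and merge its blocks into a single block $B_i$ of weight $\sum_\ell w(\{t_{j_\ell}\})$. This is unambiguous because the sequences attached to distinct blocks of $S$ involve disjoint elements (again as $S$ partitions $T$), so they occupy disjoint runs of positions and cannot interfere. The packing condition gives $\sum_\ell w(\{t_{j_\ell}\})\geq\sum_{a\in B_i}c_a$, so $B_i$ is bad and $\Psi(\eta)\in K_{\mathbf{c}}(S)$. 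That $\Psi\circ\Phi=\mathrm{id}$ and $\Phi\circ\Psi=\mathrm{id}$ is then a short check: merging restores the weight $w(B_i)$, and re-splitting a merged block reproduces the weights $c_{t_{j_\ell}}$ ($\ell<u_i$) and the same leftover weight, precisely because the run we merged was already increasing $\mathbf{c}$-packed.

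The hard part will be confirming that $\Phi$ preserves the winding number, which the surrounding argument holds fixed. For this I would argue that the statistic $\sum_a\lambda_a$ is invariant under sliding a single element clockwise: moving the point of $t_{j_\ell}$ forward by $\delta$ lengthens the incident arc $\lambda_{t_{j_\ell}-1}$ by $\delta$ and shortens $\lambda_{t_{j_\ell}}$ by $\delta$, for a net change of $0$, provided no two points cross. Since under $\Phi$ the elements of $B_i$ only move within the arc originally spanned by $B_i$ --- which contains no other element --- and are redistributed in increasing order, this redistribution can be realized by crossing-free slides (moving the elements in decreasing order of target position); hence $\sum_a\lambda_a$, and with it the winding number, is unchanged. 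The only points requiring care here are pairs $a,a+1$ both lying in $B_i$, which are necessarily adjacent in the run, and the handling of zero-weight ties, both of which are covered by the same sliding argument.
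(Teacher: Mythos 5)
Your construction is correct and coincides with the proof that the paper invokes by reference (Kim's Lemma~2.14): splitting each bad block $B_i$ into an increasing $\mathbf{c}$-packed run of singletons carrying weights $c_{t_{j_\ell}}$ with the leftover on the last one, together with the merging inverse, is exactly that bijection, and your weight bookkeeping in both directions is right. One remark worth making explicit: the crossing-free, increasing-order step of your winding-number argument is precisely where the standing hypothesis $n\notin T$ enters --- if $n$ and $1$ could both lie in some $B_i$, the split would place the block of $n$ clockwise past the block of $1$ and shift $\lambda_n$ by $k$, changing the winding number --- so the ``no two points cross'' claim should be tied to that hypothesis rather than left implicit.
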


\begin{proof}
    See \cite[Lemma 2.14]{kim}.
\end{proof}

Fix $T=\{t_1<\cdots<t_m\}\subseteq [n]$ with $n\notin T$. For each $S\in \Pi(T)$, Lemma~\ref{CorrespondenceLem} provides a map
    \[
    i_S: K_{\mathbf{c}}(S) \hookrightarrow K_{\mathbf{c}}(\{t_1\},\ldots,\{t_m\}).
    \]
If we name $\chi_S$ the characteristic map of $i_S(K_{\mathbf{c}}(S))$, in other words,     \[ \chi_s(\xi) = \left\{
    \begin{matrix} 
    1 & \text{ if $\xi\in i_S(K_{\mathbf{c}}(S))$}\\
    0 & \text{ if $\xi\notin i_S(K_{\mathbf{c}}(S))$}
    \end{matrix}\right.,\]
then we have that
\begin{align*}
    H_{\mathbf{c}}(T) &= \sum_{S\in \Pi(T)}(-1)^{|S|} |K_{\mathbf{c}}(S)| = \sum_{S\in \Pi(T)}(-1)^{|S|} |i_S(K_{\mathbf{c}}(S))|\\
    &=\sum_{S\in \Pi(T)}(-1)^{|S|} \sum_{\xi\in K_{\mathbf{c}}(\{\{t_1\},\dots,\{t_m\}\})}\chi_S(\xi)\\
    &=\sum_{\xi\in K_{\mathbf{c}}(\{\{t_1\},\dots,\{t_m\}\})}\sum_{S\in \Pi(T)}(-1)^{|S|}\chi_S(\xi).
\end{align*}

\begin{prop}\label{chi-sum}
    Fix $T\subseteq [n]$ with $n\notin T$. If $\xi \in K_\mathbf{c}(T)$ does not have an increasing $\mathbf{c}$-packed sequence of length greater than 1, then the sum $\sum_{S\in \Pi(T)}(-1)^{|S|}\chi_S(\xi)$ is equal to $(-1)^{|T|}$. Otherwise, this sum is zero.
\end{prop}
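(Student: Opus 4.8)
The plan is to reinterpret the inner sum $\sum_{S\in\Pi(T)}(-1)^{|S|}\chi_S(\xi)$ as a signed count of partitions of $T$ into admissible blocks, and then to factor this count into independent local contributions, each of which is an alternating sum over compositions. First I would record what $\chi_S(\xi)=1$ means: by Lemma~\ref{CorrespondenceLem}, $\xi$ lies in $i_S(K_{\mathbf{c}}(S))$ precisely when, for every block $B=\{s_1<\cdots<s_r\}$ of $S$, the singletons $(\{s_1\},\ldots,\{s_r\})$ occur in $\xi$ as an increasing $\mathbf{c}$-packed sequence. Call such a subset $B$ \emph{good} for $\xi$. Thus $\chi_S(\xi)=\prod_{B\in S}[\text{$B$ good}]$, and the sum is supported on partitions of $T$ all of whose blocks are good. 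Every singleton $\{t\}$ with $t\in T$ is good, since $\xi\in K_{\mathbf{c}}(T)$ forces $w(\{t\})\geq c_t$, which is exactly the length-one $\mathbf{c}$-packed condition; so the all-singletons partition always contributes the term $(-1)^{|T|}$.

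Next I would pin down the structure of good blocks of size $\geq 2$. By definition a $\mathbf{c}$-packed sequence is a run $(L_i,\ldots,L_{i+j})$ of \emph{consecutive} blocks of $\xi$; hence a good block of size $\geq 2$ must be a set of consecutive $T$-singletons of $\xi$, read in the cyclic order, whose elements increase and whose weights satisfy $w(\{\cdot\})=c_{(\cdot)}$ on all but the last entry (the last entry's bad-block condition being automatic from $\xi\in K_{\mathbf{c}}(T)$). The hypothesis $n\notin T$ is what makes this clean: the block of $\xi$ containing $n$ is never a $T$-singleton, so it breaks the cycle and the $T$-singletons of $\xi$ split into genuine \emph{linear} maximal runs of consecutive $T$-singletons. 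I would then declare a position between two $T$-singletons $\{d\},\{d'\}$ that are adjacent blocks of $\xi$ (with $\{d\}$ immediately preceding $\{d'\}$) to be \emph{joinable} when $d<d'$ and $w(\{d\})=c_d$, and observe that a consecutive stretch of $T$-singletons is good exactly when all of its interior positions are joinable. Consequently every good block lives inside a single maximal run and, within it, inside a single maximal interval of consecutive joinable positions; so a partition of $T$ into good blocks amounts to independently choosing, in each such maximal joinable interval, an arbitrary decomposition into consecutive sub-blocks.

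With this factorization in hand, the signed sum factors as a product over the maximal joinable intervals: $\sum_{S\in\Pi(T)}(-1)^{|S|}\chi_S(\xi)=\prod_{I}\big(\sum_{\text{compositions of }q_I}(-1)^{\#\text{parts}}\big)$, where $q_I$ is the number of $T$-singletons in the interval $I$. The number of compositions of $q$ into $k$ parts is $\binom{q-1}{k-1}$, so each factor equals $\sum_{k=1}^{q}\binom{q-1}{k-1}(-1)^k=-(1-1)^{q-1}$, which is $-1$ when $q=1$ and $0$ when $q\geq 2$. Now an increasing $\mathbf{c}$-packed sequence of length greater than $1$ exists if and only if some position is joinable, i.e. if and only if some maximal joinable interval has $q_I\geq 2$. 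In that case at least one factor vanishes and the whole product is $0$; otherwise every interval is a single $T$-singleton, there are exactly $|T|$ of them, and the product is $(-1)^{|T|}$. This is exactly the dichotomy asserted.

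The main obstacle, and the part requiring the most care, is the structural step: justifying that goodness is a purely ``consecutive, increasing, packed'' condition (so that Lemma~\ref{CorrespondenceLem} converts the inclusion--exclusion into a product of block-indicators), and that the hypothesis $n\notin T$ genuinely linearizes the $T$-singleton runs, ruling out a good block that wraps cyclically. Once the decomposition of $T$ into maximal joinable intervals and the independence of the per-interval choices are established, the alternating sum over compositions is a routine binomial identity and the two stated cases fall out immediately.
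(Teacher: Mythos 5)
Your proposal is correct and takes essentially the same route as the argument the paper relies on (it defers to the analogue of \cite[Proposition~2.17]{kim}): you factor the inclusion--exclusion over $\Pi(T)$ along the maximal runs of consecutive, increasingly ordered, weight-packed $T$-singletons --- linearized by the hypothesis $n\notin T$ --- and evaluate each factor by the alternating composition identity $\sum_{k=1}^{q}(-1)^k\binom{q-1}{k-1}=-(1-1)^{q-1}$, which vanishes unless the run is a lone singleton, yielding $(-1)^{|T|}$ exactly in the packed-sequence-free case. Since the paper's own proof is only a citation, your write-up supplies precisely the omitted details, including the correct observation that $w(\{t\})\geq c_t$ (hence goodness of singletons and the last entry of any packed run) is automatic from $\xi\in K_{\mathbf{c}}(T)$.
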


\begin{proof}
    This can be obtained by a reasoning entirely analogous to that of \cite[Proposition 2.17]{kim}.
\end{proof}

Proposition~\ref{chi-sum} motivates the following definition.

\begin{defi}
    For a fixed $T=\{t_1<\cdots<t_m\}\subseteq [n]$ with $n\notin T$, define $\hat{K}_{\mathbf{c}}(T)$ to be the decorated ordered set partitions of $K_\mathbf{c}(T)$ that do not contain an increasing $\mathbf{c}$-packed sequence of length greater than $1$. 
\end{defi}

Notice by Proposition~\ref{chi-sum} and the short calculation preceding it, we have that
\[
H_\mathbf{c}(T)=(-1)^{|T|}\hat{K}_\mathbf{c}(T).
\]


\begin{prop}\label{kim2.22}
    The set $\hat{K}_\mathbf{c}(T)$ is in bijection with the following set
    \[
    \left\{(v_1,\ldots,v_n)\in \mathbb{Z}^n : \sum_{i=1}^n v_i = (k-c_T)d \text{ and }\;
    \begin{matrix}
        0&\leq v_i\leq& k-c_T-1 & \text{for each $i\notin T$}\\
        1&\leq v_i\leq &k-c_T & \text{for each $i\in T$}
    \end{matrix}\right\}
    \]
    where we use the notation $c_T:=\sum_{i\in T} c_i$.
\end{prop}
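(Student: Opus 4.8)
The statement is the exact $\mathbf{c}$-analogue of the lattice-point count in \cite{kim} (where $\mathbf{c}=(r,\dots,r)$ and $c_T$ reduces to $r\lvert T\rvert$), so the plan is to adapt Kim's bijection, the only genuinely new ingredient being the replacement of the uniform quantity $r\lvert T\rvert$ by the aggregate $c_T=\sum_{i\in T}c_i$. Write $k':=k-c_T$. Recall that an element $\xi\in\hat K_{\mathbf{c}}(T)$ is a decorated ordered set partition of type $(k,n)$ in which every $t\in T$ is a singleton block $\{t\}$ with $w(\{t\})\geq c_t$ and which contains no increasing $\mathbf{c}$-packed sequence of length greater than $1$. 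I would model $\xi$ on the circle of circumference $k$ described after the definition of decorated ordered set partitions, and then \emph{deflate} this circle: for each $t\in T$ delete, from the gap immediately clockwise of the singleton $\{t\}$, an open arc of length $c_t$ (there is room since $w(\{t\})\geq c_t$). This produces a circle of circumference $k-c_T=k'$, and the tuple $(v_1,\dots,v_n)$ will be read off from the clockwise distances between consecutive elements in this deflated model.

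The first key step is that deflation preserves the winding number, and hence yields the correct coordinate sum. Lift the closed walk $1\to 2\to\cdots\to n\to 1$ to the universal cover of the circle: since every clockwise distance $\lambda_i$ is nonnegative, the walk is monotone of total length $dk$, so it projects onto the circle covering each interior point exactly $d$ times. Each deleted (open) arc lies in the interior of a gap, hence is crossed exactly $d$ times, and the total length removed from the walk is $\sum_{t\in T}d\,c_t=d\,c_T$. Therefore the deflated walk has length $dk-d\,c_T=d(k-c_T)=dk'$, which simultaneously shows that the deflated configuration again has winding number $d$ and that $\sum_{i=1}^{n}v_i=(k-c_T)d$, as required in the statement.

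Next I would pin down the two ranges and the map itself. For $i\notin T$ the quantity $v_i$ is an honest clockwise distance in a circle of circumference $k'$, and so lies in $\{0,1,\dots,k'-1\}$. For $i\in T$ the block of $i$ is the singleton $\{i\}$, whose outgoing slot has just been emptied by the deflation; here the measurement must be taken on the far side of the removed arc, which amounts to a half-open convention recording $v_i$ in $\{1,\dots,k'\}$ rather than $\{0,\dots,k'-1\}$. This is precisely the source of the asymmetric bounds $1\leq v_i\leq k'$ for $i\in T$ and $0\leq v_i\leq k'-1$ for $i\notin T$, and one checks directly that these conventions are compatible with the identity $\sum v_i=dk'$ established above.

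The remaining—and crucial—content is that this map is a \emph{bijection} onto the stated lattice set, and this is exactly where the defining hypothesis of $\hat K_{\mathbf{c}}(T)$ enters. The absence of an increasing $\mathbf{c}$-packed sequence of length greater than $1$ forbids two cyclically consecutive emptied $T$-singletons in increasing order, which is precisely the condition guaranteeing that each deleted arc occupies a uniquely locatable slot, so that the deflation can be inverted unambiguously (one re-inflates each slot and recovers the weights $w(\{t\})$ from the $v_i$) and no two distinct elements of $\hat K_{\mathbf{c}}(T)$ share an image; I would set this up by reversing the deflation blockwise, using Lemma~\ref{CorrespondenceLem} and following \cite[Lemma~2.14 and Proposition~2.17]{kim} essentially verbatim with $r\lvert T\rvert$ replaced by $c_T$. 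The main obstacle is exactly this last paragraph: verifying that the combinatorial condition carving out $\hat K_{\mathbf{c}}(T)$ translates into the half-open bounds and that the re-inflation is well defined. All the winding and summation identities are robust under the passage from $(r,\dots,r)$ to a general $\mathbf{c}$, since they depend on $T$ only through the aggregate $c_T$; consequently this is the single place where Kim's argument must be re-examined rather than merely quoted, and the combinatorial mechanism being identical, the proof goes through as in \cite{kim}.
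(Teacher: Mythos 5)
Your deflation idea is a natural first attempt, and the universal-cover argument that each deleted arc is crossed exactly $d$ times (hence $\sum_i v_i=(k-c_T)d$ and the winding number survives deflation) is correct. But the actual content of the proposition is the pair of range claims together with bijectivity, and there your map provably fails: reading off ``clockwise distances in the deflated model'' does not send $\hat{K}_{\mathbf{c}}(T)$ into the stated set, and no half-open/``far side'' convention can repair this. The trouble comes exactly from singletons $\{t\}$, $t\in T$, with $w(\{t\})=c_t$: deflation collapses them onto the next block, and such configurations are \emph{allowed} in $\hat{K}_{\mathbf{c}}(T)$ (only the increasing packed patterns are excluded; e.g.\ a packed singleton followed by a non-$T$ block, or a decreasing packed pair, must remain, or else the counting would be wrong).

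Here is an explicit counterexample, already in Kim's original setting: take $n=5$, $\mathbf{c}=(1,1,1,1,1)$, $k=4$, $T=\{2,3\}$, winding number $d=1$, so $c_T=2$ and $k-c_T=2$. A direct check of all candidates shows that $\hat{K}_{\mathbf{c}}(T)$ is the single element $\xi_0=(\{2\},\{3\},\{1,4,5\})$ with $w(\{2\})=2$, $w(\{3\})=1$, $w(\{1,4,5\})=1$ (every other choice of blocks, cyclic order and weights either has winding number $\neq 1$ or contains the increasing packed pair $(\{2\},\{3\})$), while the right-hand set is the single vector $(0,1,1,0,0)$. So any bijection must send $\xi_0\mapsto(0,1,1,0,0)$. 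But in your deflated circle of circumference $2$ the blocks occupy two \emph{distinct} points, $\{2\}$ at one and $\{3\},\{1,4,5\}$ stacked at the other; consequently the clockwise distance from the block containing $1$ to the block containing $2$ equals $1$, never $0$, and the distance from $\{3\}$ to the block containing $4$ is $0$ (or $2$ under your far-side convention), never $1$. Your map outputs $(1,1,0,0,0)$ or $(1,1,2,0,0)$, neither of which lies in the target set. What a correct proof must do is transfer the unit of length sitting between $\{1,4,5\}$ and $\{2\}$ from coordinate $v_1$ to coordinate $v_3$: collapsed packed singletons have to exchange roles with the blocks they collapse onto, and one must show the no-increasing-packed-sequence hypothesis makes this re-indexing consistent and invertible. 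That mechanism is the real content of Kim's Proposition 2.22, and it is absent from your argument; citing his Lemma 2.14 and Proposition 2.17 does not supply it, since those are the analogues of Lemma~\ref{CorrespondenceLem} and Proposition~\ref{chi-sum}, which are different statements. Two smaller slips: the forbidden pattern requires only the \emph{first} singleton of the pair to be emptied (the second need only be bad), and the hypothesis is needed to make the forward map well defined at all, not merely to invert the deflation.
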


\begin{proof}
    See \cite[Proposition 2.22]{kim}.
\end{proof}

The idea is to provide a formula for $H_{\mathbf{c}}$. Since $H_{\mathbf{c}} = (-1)^{|T|} \hat{K}_{\mathbf{c}}(T)$, it is enough to enumerate the elements in the set of Proposition~\ref{kim2.22}. Its size can be calculated as follows. Let us consider new variables $v'_1,\ldots,v'_n$. For each $i\notin T$, we set $v_i'=v_i$, otherwise we put $v_i'=v_i-1$. Then the cardinality of the set in Proposition~\ref{kim2.22}
can be seen as the number of nonnegative integer solutions to $v'_1+\cdots+v'_n=(k-c_T)d-|T|$ under the constraints $0\leq v_i'\leq k-c_T-1$ for each $1\leq i\leq n$. This coincides with the coefficient of $x^{(k-c_T)d-|T|}$ in $(1+x+\cdots+x^{k-c_T-1})^n$, which is the number $\binom{n}{(k-c_T)d-|T|}_{k-c_T}$.

Putting all the pieces together, we obtain
\begin{align}
    \sum_{T\subseteq [n]}H_\mathbf{c}(T) &=\sum_{T\subseteq [n]}(-1)^{|T|}|\hat{K}_\mathbf{c}(T)|\nonumber\\
    &=\sum_{T\subseteq [n]}(-1)^{|T|}\binom{n}{(k-c_T)d-|T|}_{k-c_T}\nonumber\\
    &=\sum_{m=0}^n\sum_{|T|=m}(-1)^{|T|}\binom{n}{(k-c_T)d-|T|}_{k-c_T}\nonumber\\
    &=\sum_{m=0}^{k-1}\sum_{|T|=m}(-1)^{m}\binom{n}{(k-c_T)d-m}_{k-c_T}\nonumber\\
    &=\sum_{m=0}^{k-1}\sum_{v=0}^{k-1}\sum_{\substack{|T|=m\nonumber\\ c_T=v}}(-1)^{m}\binom{n}{(k-v)d-m}_{k-v}\nonumber\\
    &=\sum_{m=0}^{k-1}\sum_{v=0}^{k-1}(-1)^{m}\rho_{\mathbf{c},m}(v)\binom{n}{(k-v)d-m}_{k-v} \nonumber\\
    &=\sum_{m=0}^{k-1}\sum_{v=0}^{k-1}(-1)^{m}\rho_{\mathbf{c},m}(v)\sum_{i=0}^{d}(-1)^i\binom{n}{i}\binom{n-1+(k-v)(d-i)-m}{n-1}\label{eq:lemma-used}\\
    &=\sum_{i=0}^{d}(-1)^i \binom{n}{i} \left(\sum_{m=0}^{k-1}\sum_{v=0}^{k-1}(-1)^{m}\rho_{\mathbf{c},m}(v)\binom{n-1+(k-v)(d-i)-m}{n-1}\right)\nonumber\\
    &=\sum_{i=0}^{d}(-1)^i \binom{n}{i} \ehr(\mathscr{R}_{k,\mathbf{c}},d-i)\label{eq:i-used-ehr-formula}\\
    &= [x^d] \left((1-x)^n\, \sum_{j=0}^{\infty} \ehr(\mathscr{R}_{k,\mathbf{c}}, j) x^j\right)\nonumber\\
    &=[x^d] h^*(\mathscr{R}_{k,\mathbf{c}},x),\label{eq:i-used-definition}
\end{align}
where in \eqref{eq:lemma-used} we used Lemma~\ref{lemma:gen-binomial}, in \eqref{eq:i-used-ehr-formula} we leveraged the formula of Theorem~\ref{thm:first-formula} and in \eqref{eq:i-used-ehr-formula} we just used the definition of the $h^*$-polynomial. Now, combining this with Proposition~\ref{kim2.11}, we have proved Theorem~\ref{main-hstar}.

\subsection{A real-rootedness conjecture}

In the context of the study of the $h^*$-polynomials of the polytopes $\mathscr{R}_{k,\mathbf{c}}$ we pose the following conjecture.

\begin{conj}
    Let $\mathbf{c}=(c_1,\ldots,c_n)\in \mathbb{Z}_{>0}^n$ and $k>0$. Then $h^*(\mathscr{R}_{k,\mathbf{c}},x)$ is real-rooted. Moreover, if $c_n\geq 2$, define $\mathbf{c}'=(c_1,\ldots,c_{n-1},c_n-1,1)\in \mathbb{Z}^{n+1}_{>0}$, then the roots of $h^*(\mathscr{R}_{k,\mathbf{c}},x)$ and $h^*(\mathscr{R}_{k,\mathbf{c}'},x)$ interlace.
\end{conj}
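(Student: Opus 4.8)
The two assertions in the conjecture are naturally coupled, and the plan is to prove them simultaneously by induction on the \emph{excess} $e(\mathbf{c}):=\sum_{i=1}^{n}(c_i-1)$. The refinement $\mathbf{c}\mapsto\mathbf{c}'=(c_1,\dots,c_{n-1},c_n-1,1)$ preserves the total $c_1+\cdots+c_n$ but drops $e$ by exactly one, and iterating it carries any $\mathbf{c}$ to the all-ones vector, for which $\mathscr{R}_{k,\mathbf{c}}$ is the hypersimplex $\Delta_{k,N}$ with $N=c_1+\cdots+c_n$. Thus the interlacing clause is not an afterthought but the very engine of the induction: if $h^*(\mathscr{R}_{k,\mathbf{c}'},x)$ is real-rooted and interlaces the polynomial $h^*(\mathscr{R}_{k,\mathbf{c}},x)$, then the latter is real-rooted as well, so both halves of the statement propagate together from the base case upward in $e$.

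For the base case $e(\mathbf{c})=0$ one must establish real-rootedness of the $h^*$-polynomial of the hypersimplex. Here I would either invoke the known real-rootedness of the relevant Eulerian-type polynomials or, failing a direct citation, prove it in isolation using the combinatorial model of Theorem~\ref{main-hstar}: the winding-number distribution over hypersimplicial decorated ordered set partitions should be amenable to a recurrence of Eulerian type, of the schematic form $p_N=\alpha(x)\,p_{N-1}+x(1-x)\,p_{N-1}'$, which is the classical mechanism producing real-rootedness and interlacing of consecutive members.

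The heart of the argument is the inductive step, where one needs an explicit relation between $h^*(\mathscr{R}_{k,\mathbf{c}},x)$ and $h^*(\mathscr{R}_{k,\mathbf{c}'},x)$. I would extract it from the box-splitting present in the Ehrhart series. Writing $\ehr(\mathscr{R}_{k,\mathbf{c}},t)=[z^{kt}]\,f(z,t)\,\frac{1-z^{c_nt+1}}{1-z}$ and $\ehr(\mathscr{R}_{k,\mathbf{c}'},t)=[z^{kt}]\,f(z,t)\,\frac{(1-z^{(c_n-1)t+1})(1-z^{t+1})}{(1-z)^2}$ with a common factor $f$, the passage from $\mathbf{c}$ to $\mathbf{c}'$ replaces the single $x_n$-box of capacity $c_n$ by a convolution of two boxes, which geometrically is the lift of $\mathscr{R}_{k,\mathbf{c}}$ with trapezoidal fibers $1,2,\dots,2,1$ over the $x_n$-axis. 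At the level of the combinatorial model this corresponds to the fact that, in a $\mathbf{c}'$-compatible decorated ordered set partition, a singleton block $\{n+1\}$ is forced to have weight $0$ (since $c_{n+1}=1$); decomposing according to whether $n+1$ is such a singleton or lies in a larger block should yield a bijective expression of the $\mathbf{c}'$-objects in terms of $\mathbf{c}$-objects with a controlled shift of the winding number. My expectation is that this produces a two-term recurrence $h^*(\mathscr{R}_{k,\mathbf{c}'},x)=\alpha(x)\,h^*(\mathscr{R}_{k,\mathbf{c}},x)+\beta(x)\,\mathcal{D}\,h^*(\mathscr{R}_{k,\mathbf{c}},x)$ with $\mathcal{D}$ a differentiation-type operator and $\alpha,\beta$ of definite sign on the relevant interval, at which point the standard interlacing lemmas of Br\"and\'en--Wagner type (nonnegative combinations sharing a common interleaver remain real-rooted and interlacing) close the induction.

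The step I expect to be genuinely hard is making this recurrence precise. The obstruction is the jump in dimension: $\mathscr{R}_{k,\mathbf{c}'}$ has dimension one larger than $\mathscr{R}_{k,\mathbf{c}}$, so the relation between the two $h^*$-polynomials carries an extra factor of $(1-x)$ and is \emph{not} a naive linear identity between the Ehrhart series; reconciling this with an interlacing-preserving operator requires care, and it is conceivable that one must track an auxiliary interlacing polynomial (for instance the $h^*$-polynomial of an intermediate fat slice, cf.\ Proposition~\ref{prop:fat-thin}) rather than relating $\mathbf{c}$ and $\mathbf{c}'$ directly. As a fallback for at least the real-rootedness half, I would attempt to identify $h^*(\mathscr{R}_{k,\mathbf{c}},x)$ with an $s$-Eulerian polynomial for a sequence $s$ read off from $\mathbf{c}$ and $k$, since those are known (Savage--Visontai) to have only real roots; a statistic-preserving bijection between $\mathbf{c}$-compatible decorated ordered set partitions and a suitable family of inversion sequences would suffice, although it would likely not yield the interlacing clause for free.
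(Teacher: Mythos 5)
You should first note what you are up against: the statement you set out to prove is posed in the paper as a \emph{conjecture}, and the paper offers no proof of it; it remains open. So there is no ``paper's proof'' to match, and the question is only whether your argument closes the problem. It does not, for two concrete reasons. First, your base case is not available. You propose to induct on the excess $e(\mathbf{c})=\sum_i(c_i-1)$ with base case $\mathbf{c}=(1,\ldots,1)$, i.e.\ the hypersimplex $\Delta_{k,N}$, and to ``invoke the known real-rootedness of the relevant Eulerian-type polynomials.'' But $h^*(\Delta_{k,N},x)$ is \emph{not} an Eulerian polynomial: by Theorem~\ref{main-hstar} its coefficients count decorated ordered set partitions by winding number, and only its evaluation at $x=1$ (the normalized volume) is an Eulerian number. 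Real-rootedness of $h^*(\Delta_{k,N},x)$ is itself wide open; indeed the paper points out that the conjecture would already settle the \emph{unimodality} of the hypersimplex $h^*$-vector, described there as a long-standing open problem (see \cite{braun}). So the foundation of your induction is a statement at least as hard as the one being proved. Note also an orientation issue: in the paper's interlacing chain the hypersimplex sits at the \emph{end} (the finest refinement), so your induction must pass real-rootedness from $\mathbf{c}'$ up to $\mathbf{c}$ through the interlacing relation, which makes the unproven hypersimplex case the seed of everything.

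Second, the engine of your inductive step is never constructed. The claimed two-term recurrence $h^*(\mathscr{R}_{k,\mathbf{c}'},x)=\alpha(x)\,h^*(\mathscr{R}_{k,\mathbf{c}},x)+\beta(x)\,\mathcal{D}\,h^*(\mathscr{R}_{k,\mathbf{c}},x)$ is offered only as an expectation; no candidate $\alpha$, $\beta$, $\mathcal{D}$ is produced, and no verification of the sign and degree conditions needed for the Br\"and\'en--Wagner lemmas is attempted. You yourself identify the obstruction --- $\mathscr{R}_{k,\mathbf{c}'}$ has dimension one higher than $\mathscr{R}_{k,\mathbf{c}}$ (it is integrally equivalent, via Proposition~\ref{prop:fat-thin} read backwards, to a fat slice over the smaller prism), so the two Ehrhart series differ by more than a factor of $(1-x)^{-1}$ and no naive linear identity between the $h^*$-polynomials is in sight --- but then defer its resolution. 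The same applies to the fallback: identifying $h^*(\mathscr{R}_{k,\mathbf{c}},x)$ with an $s$-Eulerian polynomial would require a statistic-preserving bijection that is not supplied, and there is no evidence in the paper (or elsewhere) that such an identification exists. In short, your text is a reasonable research program --- the refinement chain $\mathbf{c}\mapsto\mathbf{c}'$ is exactly the structure the authors highlight after the conjecture --- but every step that would carry actual proof content (base case, recurrence, bijection) is left as a hope, so the conjecture remains as open after your argument as before it.
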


Observe that this would automatically provide a proof for the unimodality of the coefficients of the $h^*$-polynomial of all hypersimplices, a long-standing open problem regarding the combinatorics of the hypersimplex, see \cite{braun}. In particular, if we use the notation $p(x)\lessdot q(x)$ to denote that $p$ and $q$ interlace, after fixing $k$ and $n$, our conjecture implies that one can produce the sequence of polynomials
    \[ h^*(\mathscr{R}_{k,(n)},x) \lessdot h^*(\mathscr{R}_{k,(n-1,1)},x)\lessdot h^*(\mathscr{R}_{k,(n-2,1,1)},x) \lessdot \cdots \lessdot h^*(\mathscr{R}_{k,(1,\ldots,1)},x),\]
and $h^*(\mathscr{R}_{k,(1,\ldots,1)},x)=h^*(\Delta_{k,n},x)$. It is reasonable to search for recurrences that these polynomials satisfy and use techniques as those used in \cite{haglund-zhang} or \cite{solus-gustafsson}.

\section{Some applications}\label{sec:6}

In this section we discuss some applications that are of independent interest. In some cases we omit details and definitions, and we advise to look at the references mentioned correspondingly.

\subsection{Volumes}

As we mentioned in the introduction, the leading coefficient of $\ehr(\mathscr{P},t)$ is $\vol(\mathscr{P})$. In particular, by using the formula of Theorem~\ref{main} for $m=n-1$ we obtain Corollary \ref{coromain}. We restate it and prove it now.

\begin{cor}
    Let $\mathbf{c}=(c_1,\dots,c_n)\in \mathbb{Z}_{>0}^n$ and $0 < k < c_1+\cdots+c_n$. Then the volume of $\mathscr{R}_{k,\mathbf{c}}$ is given by
        \[ \vol(\mathscr{R}_{k,\mathbf{c}}) = \frac{1}{(n-1)!} \sum_{\ell = 0}^{k-1} B(\ell,\mathbf{c})A(n-1,k-\ell-1)\]
    where $B(\ell,\mathbf{c})$ is defined as the number of ways of placing $\ell$ indistinguishable balls into $n$ boxes of capacities $c_1-1, \ldots, c_n-1$ respectively.
\end{cor}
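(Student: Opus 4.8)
The plan is to specialize Theorem~\ref{main} to the leading coefficient. Under the hypothesis $0<k<c_1+\cdots+c_n$ the polytope $\mathscr{R}_{k,\mathbf{c}}$ has dimension $n-1$, so its volume is precisely the coefficient of $t^{n-1}$ in its Ehrhart polynomial. First I would set $m=n-1$ in Theorem~\ref{main}, which gives
\[
\vol(\mathscr{R}_{k,\mathbf{c}}) = [t^{n-1}]\ehr(\mathscr{R}_{k,\mathbf{c}},t) = \frac{1}{(n-1)!}\sum_{\ell=0}^{k-1} W(\ell,n,n,\mathbf{c})\,A(n-1,k-\ell-1).
\]
Comparing this with the asserted formula, it remains only to establish the identity $W(\ell,n,n,\mathbf{c}) = B(\ell,\mathbf{c})$.

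The key step is to unwind Definition~\ref{def:cwhlah} in the extremal case $m=n$. By definition, $W(\ell,n,n,\mathbf{c})$ counts the $\mathbf{c}$-compatible weighted permutations $(\sigma,w)$ where $\sigma\in\mathfrak{S}_n$ has exactly $n$ cycles and total weight $w(\sigma)=\ell$. A permutation of $[n]$ possessing $n$ cycles must have every element as a fixed point, so $\sigma$ is forced to be the identity and $C(\sigma)=\{\{1\},\ldots,\{n\}\}$. Consequently a weight $w$ is nothing but a tuple $(w_1,\ldots,w_n)\in\mathbb{Z}_{\geq0}^n$ with $w_i := w(\{i\})$.

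Applying the $\mathbf{c}$-compatibility condition $w(\mathfrak{c})<\sum_{i\in\mathfrak{c}}c_i$ to each singleton cycle $\{i\}$ yields $w_i<c_i$, i.e.\ $0\leq w_i\leq c_i-1$, while the total-weight condition reads $w_1+\cdots+w_n=\ell$. Hence $W(\ell,n,n,\mathbf{c})$ equals the number of integer solutions of $w_1+\cdots+w_n=\ell$ subject to $0\leq w_i\leq c_i-1$, which is exactly the number of ways to distribute $\ell$ indistinguishable balls into $n$ boxes of capacities $c_1-1,\ldots,c_n-1$; that is, $B(\ell,\mathbf{c})$. Substituting this into the displayed formula completes the argument.

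I do not expect a genuine obstacle here: the whole content of the corollary is the observation that the only permutation contributing to the top Ehrhart coefficient is the identity, which collapses the elaborate weighted-permutation enumeration into an elementary count of bounded compositions. The only point deserving care is the translation of the strict compatibility inequality $w_i<c_i$ into the box capacity $c_i-1$, so that the bounds match $B(\ell,\mathbf{c})$ exactly.
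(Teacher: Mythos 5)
Your proposal is correct and follows exactly the paper's own argument: take $m=n-1$ in Theorem~\ref{main}, observe that the identity is the only permutation of $[n]$ with $n$ cycles, and translate $\mathbf{c}$-compatibility of the singleton weights into the bounded composition count $B(\ell,\mathbf{c})$. No gaps; this matches the paper's proof in both structure and detail.
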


\begin{proof}
    Given that we assume that $0 < k < c_1+\cdots+c_n$, the Ehrhart polynomial has degree $n-1$ and hence the volume is obtained by taking $m=n-1$ in the statement of our main theorem. 
    \begin{align*}
        \vol(\mathscr{R}_{k,\mathbf{c}}) &= [t^{n-1}] \ehr(\mathscr{R}_{k,\mathbf{c}},t)\\
        &= \frac{1}{(n-1)!} \sum_{\ell = 0}^{k-1} W(\ell, n,n, \mathbf{c}) A(n-1,k-\ell-1)
    \end{align*}
    Notice that we have a factor $W(\ell,n,n,\mathbf{c})$ in each summand. The only permutation $\sigma\in \mathfrak{S}_n$ with $n$ cycles is the identity. Therefore, in order for the permutation to admit a $\mathbf{c}$-compatible weight, we have to ensure that each of the numbers $i\in [n]$ is assigned a value between $0$ and $c_i-1$. To guarantee that the total weight is $\ell$, we can think that we are putting $\ell$ balls inside boxes with capacities $c_1-1,\ldots, c_n-1$, as is described in the statement.
\end{proof}

\subsection{Flag Eulerian numbers}

In this subsection we will extend and generalize  some of the results of \cite{han-josuat}. Let $\mathbf{c}=(c_1,\ldots,c_n)\in\mathbb{Z}^n_{>0}$. In what follows we will define objects that coincide with those defined in \cite{han-josuat} when $c_1=\cdots=c_n=r$. A \emph{$\mathbf{c}$-colored permutation on $[n]$} consists of  a permutation $\sigma\in\mathfrak{S}_n$ and a function $\mathbf{s}:[n]\to \mathbb{Z}_{\geq 0}$ such that $s_i:=\mathbf{s}(i) \leq c_i-1$ for each $i\in [n]$. The set of all $\mathbf{c}$-colored permutations for a fixed value of $n$ is denoted by $\mathfrak{S}_n^{(\mathbf{c})}$. The \emph{descent number} of a $\mathbf{c}$-colored permutation $(\sigma, \mathbf{s})\in \mathfrak{S}_n^{(\mathbf{c})}$ is defined as $\operatorname{des}(\sigma,\mathbf{s}) := \#\operatorname{Des}(\sigma,\mathbf{s})$ where
    \[ \operatorname{Des}(\sigma,\mathbf{s}) := \left\{ i\in [n-1] : s_i > s_{i+1}\, \text{ or $s_i=s_{i+1}$ and $\sigma_i>\sigma_{i+1}$}\right\}.\]
The \emph{flag descent number} of a $\mathbf{c}$-colored permutation $(\sigma,\mathbf{s})\in\mathfrak{S}_n^{(\mathbf{c})}$ is defined by
    \[ \operatorname{fdes}(\sigma,\mathbf{s}) := s_n+ \sum_{i\in \operatorname{Des}(\sigma,\mathbf{s})} c_{i+1}. \]
The \emph{flag Eulerian numbers} are defined by 
    \[A_{n,k}^{(\mathbf{c})} := \#\left\{ (\sigma,\mathbf{s}) \in \mathfrak{S}^{(\mathbf{c})}_n: \operatorname{fdes}(\sigma,\mathbf{s}) = k \right\}. \]
In \cite{han-josuat} Han and Josuat-Verg\`es approached the case in which $c_1=\cdots=c_n=r$\footnote{A caveat is that there is a slight difference in the indexing that the authors of \cite{han-josuat} use for the Eulerian numbers. They write $A_{n,k}$ for what we would denote $A(n,k-1)$.}. They provided formulas and interpretations for $A_{n,k}^{(r,\ldots,r)}$. In particular, to obtain an explicit formula for such numbers, they studied certain half-open polytopes having these numbers as their volumes. We can extend this technique to arbitrary vectors $\mathbf{c}=(c_1,\ldots,c_n)$.

First, for each $v=(v_1,\ldots,v_n)\in \mathbb{R}^n$, we call $\operatorname{Des}(v)=\{i\in [n-1]:v_i>v_{i+1}\}$. Let us use the name 
    \[\operatorname{fdes}_{\mathbf{c}}(v):=v_n+\sum_{i\in \operatorname{Des}(v)} c_{i+1},\]
and define the following two (half-open) polytopes
    \begin{align*}
    \mathcal{F}_{n,k}^{(\mathbf{c})} &:= \left\{v\in\mathscr{R}_{\mathbf{c}} : k \leq \operatorname{fdes}_{\mathbf{c}}(v) < k+1\right\},\\
     \mathcal{A}_{n,k}^{(\mathbf{c})} &:= \left\{ v\in \mathscr{R}_{\mathbf{c}} : k  \leq \sum_{i=1}^n v_i < k+1 \right\}.
    \end{align*}
The key observation is that the above two (half-open) polytopes have the same volume, as there is a measure-preserving map between them that we shall describe. In \cite{stanley-eulerian} Stanley studied the case $c_1=\cdots=c_n=1$ and in \cite{han-josuat} Han and Josuat-Verg\`es extended this to the case $c_1=\cdots=c_n=r$. The latter case had also been approached before by Steingr\'imsson in \cite[Section 4.4]{steingrimsson}. We outline here how to show it for an arbitrary vector $\mathbf{c}$. Consider the map $\varphi:[0,c_1)\times\cdots\times[0,c_n)\to [0,c_1)\times\cdots\times [0,c_n)$ defined everywhere, except in a measure zero set, by $(a_1,\ldots,a_n)\mapsto (b_1,\ldots,b_n)$ where for each $1\leq i\leq n$ we have
    \[ b_i := \left\{ 
    \begin{matrix} 
        a_i - a_{i-1} & \text{ if $a_{i-1}< a_i$}\\
        a_i - a_{i-1} + c_i & \text{ if $a_{i-1}>a_i$}
    \end{matrix}\right.,\]
where we use the convention $a_0:=0$. This map is injective where defined, as if we assume that $(a_1,\ldots,a_n)$ and $(a_1',\ldots,a_n')$ map to the same vector $(b_1,\ldots,b_n)$, then looking at the first coordinates yields $a_1=a_1'$. Next, looking at the second coordinates, we must have
    \[ a_2 - a_1 + \varepsilon c_2 = a_2'-a_1'  + \varepsilon' c_2,\]
where $\varepsilon, \varepsilon'\in \{0,1\}$ depend on how $a_1$ and $a_2$ compare (resp. $a_1'$ and $a_2'$). Cancelling the summand $a_1=a_1'$ from both sides gives $a_2 = a_2' + (\varepsilon'-\varepsilon) c_2$. Since we are assuming that both vectors $(a_1,\ldots,a_n)$ and $(a_1',\ldots,a_n')$ lie in $[0,c_1)\times\cdots\times [0,c_n)$, the only possibility is that $\varepsilon=\varepsilon'$ and that $a_2=a_2'$. Inductively we can continue using the same reasoning.

Notice that the map $\varphi$ is indeed measure preserving, as if we consider for each $\sigma\in\mathfrak{S}_n$ the subset of $\mathscr{R}_{\mathbf{c}}$ defined by 
\[\mathscr{R}_{\mathbf{c},\sigma} := \left\{v\in \mathscr{R}_{\mathbf{c}} : v_{\sigma(1)} < v_{\sigma(2)} < \cdots < v_{\sigma(n)}\right\},\footnote{Notice that this subpolytope is not necessarily a simplex.}\]
then $\varphi|_{\mathscr{R}_{\mathbf{c},\sigma}}$ is just a map of the form $\varphi(\mathbf{x}) = A\mathbf{x} + \mathbf{\varepsilon}$, where $A$ is a matrix with $1$'s in the main diagonal and $-1$'s in second diagonal, whereas $\mathbf{\varepsilon}$ is a vector having $c_i$'s or zeros as entries. 

Moreover, observe that if we take a vector $v\in \mathcal{F}_{n,k}^{(\mathbf{c})}$ having distinct coordinates and we call $(v_1',\ldots,v_n'):=\varphi(v)$, we have that 
    \[v_1'+\cdots+v_n' = (v_1-v_0)+(v_2-v_1)+\cdots+(v_n-v_{n-1}) + \sum_{i\in \operatorname{Des}(v)} c_{i+1} = \operatorname{fdes}_{\mathbf{c}}(v),\]
and hence $\varphi$ maps (up to a measure zero subset) all of $\mathcal{F}_{n,k}^{(\mathbf{c})}$ to  $\mathcal{A}_{n,k}^{(\mathbf{c})}$, and hence the volumes of these two (half-

On one hand we have that $\vol(\mathcal{A}_{n,k}^{(\mathbf{c})})=\vol(\mathcal{F}_{n,k}^{(\mathbf{c})})$ whereas, on the other hand, it is not difficult to show that $\vol(\mathcal{F}^{(\mathbf{c})}_{n,k}) = \frac{1}{n!}A_{n,k}^{(\mathbf{c})}$. In other words, \[\vol(\mathcal{A}_{n,k}^{(\mathbf{c})}) = \frac{1}{n!}\,A_{n,k}^{(\mathbf{c})}.\]

We can use that $\mathcal{A}_{n,k}^{(\mathbf{c})}$ is a slice of a prism with some missing facets. In particular, its volume will not change if we add these facets. This trick leads to a proof of the following result.

\begin{cor}
    The flag Eulerian number $A_{n,k}^{(\mathbf{c})}$ is given by
    \[A_{n,k}^{(\mathbf{c})} = \sum_{\ell = 0}^{k} B(\ell, \mathbf{c}) A(n, k-\ell)\]
    where $B(\ell,\mathbf{c})$ is defined as in Corollary \ref{coromain}.
\end{cor}
\begin{proof}
    Consider the polytope $\mathscr{R}_{k+1, \mathbf{c}'}$ for $\mathbf{c}' = (c_1,\ldots,c_n, 1)\in \mathbb{Z}_{>0}^{n+1}$. By Corollary \ref{coromain}, we obtain
        \begin{equation} A_{n,k}^{(\mathbf{c})} = n! \vol\left(\mathcal{A}^{(\mathbf{c})}_{n,k}\right) = n! \vol\left(\mathscr{R}_{k+1,\mathbf{c}'}\right) =\sum_{\ell = 0}^{k} B(\ell, \mathbf{c}') A(n, k-\ell),\end{equation}
    where $B(\ell, \mathbf{c}')$ is in this case the number of ways of placing $\ell$ balls into $n+1$ boxes of capacities $c_1-1,\ldots,c_n-1$ and $0$ or, in other words, $B(\ell,\mathbf{c}')=B(\ell,\mathbf{c})$, which completes the proof. 
\end{proof}

The identity in the preceding Corollary provides a new combinatorial formula for the version of the flag Eulerian numbers studied in \cite{han-josuat}, by just particularizing $c_1=\cdots=c_n=r$. We do not know of a direct combinatorial way of deriving the preceding equality.

Additionally, since the (normalized) volume of a polytope can be obtained by evaluating the $h^*$-polynomial in $x=1$, we have a refinement of $A_{n,k}^{(\mathbf{c})}$ by considering the coefficients of the $h^*$-polynomial of the polytope $\mathscr{R}_{k+1,\mathbf{c}'}$ by considering the winding numbers.

\begin{cor}
    The flag Eulerian number $A_{n,k}^{(\mathbf{c})}$ is given by
    \[A_{n,k}^{(\mathbf{c})} = \#\{\text{$\mathbf{c}'$-compatible decorated ordered set partitions of type $(k+1,n+1)$}\},\]
    where $\mathbf{c}'=(\mathbf{c},1)\in\mathbb{Z}^{n+1}_{>0}$.
\end{cor}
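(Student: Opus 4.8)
The plan is to synthesize two results already available: the volume identity established in the proof of the preceding corollary, and the combinatorial interpretation of the $h^*$-coefficients from Theorem~\ref{main-hstar}, specialized by evaluating the $h^*$-polynomial at $x=1$.

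First I would recall that the proof of the preceding corollary already produced the identity $A_{n,k}^{(\mathbf{c})} = n!\,\vol(\mathscr{R}_{k+1,\mathbf{c}'})$, where $\mathbf{c}' = (\mathbf{c},1)\in\mathbb{Z}_{>0}^{n+1}$. Since $\mathscr{R}_{k+1,\mathbf{c}'}$ is the slice of an $(n+1)$-dimensional prism by a hyperplane, it has dimension $n$ (provided $0<k+1<c_1+\cdots+c_n+1$, which is exactly the admissible range), and therefore $n!\,\vol(\mathscr{R}_{k+1,\mathbf{c}'})$ is precisely the \emph{normalized} volume of this polytope.

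Next I would invoke the standard property, recalled at the start of Section~\ref{sec:5}, that the normalized volume of a lattice polytope $\mathscr{P}$ equals $h^*(\mathscr{P},1)$. Writing this evaluation as the sum of the coefficients, one has
\[
A_{n,k}^{(\mathbf{c})} = h^*(\mathscr{R}_{k+1,\mathbf{c}'},1) = \sum_{i=0}^{n+1} [x^i]\,h^*(\mathscr{R}_{k+1,\mathbf{c}'},x).
\]
By Theorem~\ref{main-hstar}, applied with the parameters $(k+1,n+1)$ and the augmented vector $\mathbf{c}'$, each summand $[x^i]\,h^*(\mathscr{R}_{k+1,\mathbf{c}'},x)$ counts the $\mathbf{c}'$-compatible decorated ordered set partitions of type $(k+1,n+1)$ having winding number $i$. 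Summing over all winding numbers $i$ collapses the right-hand side to the total number of $\mathbf{c}'$-compatible decorated ordered set partitions of type $(k+1,n+1)$, which is exactly the claimed expression.

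There is essentially no substantive obstacle here; the corollary is a direct assembly of prior results. The only points demanding a little care are the bookkeeping of dimensions, so that the factor $n!$ matches the normalization implicit in $h^*(\cdot,1)$ for the $n$-dimensional polytope $\mathscr{R}_{k+1,\mathbf{c}'}$, and verifying that the hypotheses $0<k+1<c_1+\cdots+c_n+1$ of Theorem~\ref{main-hstar} hold for the shifted parameters, which they do throughout the relevant range of $k$.
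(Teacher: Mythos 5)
Your proposal is correct and follows essentially the same route as the paper: the paper derives this corollary exactly by combining the volume identity $A_{n,k}^{(\mathbf{c})} = n!\,\vol(\mathscr{R}_{k+1,\mathbf{c}'})$ from the preceding corollary with the fact that the normalized volume equals $h^*(\mathscr{R}_{k+1,\mathbf{c}'},1)$, and then applying Theorem~\ref{main-hstar} to sum the counts over all winding numbers.
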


Compare this result to \cite[Remark 1.2]{kim}. A version of this fact for hypersimplicial permutations (i.e. $\mathbf{c}=(1,\ldots,1)$) appeared also in \cite{ocneanu}.

\subsection{Independence polytopes of uniform matroids}

The uniform matroid $\mathsf{U}_{k,n}$ of rank $k$ and cardinality $n$ has the hypersimplex $\Delta_{k,n}$ as its base polytope. Its independence polytope, namely, the convex hull of the indicator vectors of all the independent subsets, denoted $\mathscr{P}_I(\mathsf{U}_{k,n})$ is given by
    \begin{equation}
        \mathscr{P}_I(\mathsf{U}_{k,n}) = \left\{ x\in [0,1]^n : \sum_{i=1}^n x_i \leq k\right\}.
    \end{equation}
In his PhD thesis \cite{duna} Duna conjectured that these polytopes are Ehrhart positive, and he proved that they indeed are when $k=2$. Later, Ferroni proved in \cite{ferroni2} that Ehrhart positivity holds for all $k$ and $n$, but relying on complicated inequalities. 

Observe that this polytope is a fat slice of the unit cube. Therefore, using Proposition~\ref{prop:fat-thin}, we see that 

    \[ \ehr(\mathscr{P}_I(\mathsf{U}_{k,n}), t) = \ehr(\mathscr{R}_{k,\mathbf{c}'},t),\]
where $\mathbf{c}' = (1,\ldots, 1, k)$. This provides an alternative and much more conceptual proof of the Ehrhart positivity of these polytopes. Moreover, we can retrieve an explicit formula for their Ehrhart polynomial, also described in \cite[Theorem~4.9]{duna}.

\begin{cor}
    The Ehrhart polynomial of $\mathscr{P}_I(\mathsf{U}_{k,n})$ is given by
    \[ \ehr(\mathscr{P}_I(\mathsf{U}_{k,n}), t) = \sum_{j=0}^{k-1}(-1)^j \binom{n}{j} \binom{(k-j)t+n-j}{n}.\]
    This polynomial has positive coefficients.
\end{cor}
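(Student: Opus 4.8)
The plan is to exploit the reduction already set up in the surrounding text: the independence polytope is manifestly the fat slice $\mathscr{R}'_{0,k,\mathbf{1}}$ of the unit cube, where $\mathbf{1}=(1,\ldots,1)\in\mathbb{Z}^n_{>0}$. First I would invoke Proposition~\ref{prop:fat-thin} with $a=0$, $b=k$ and $\mathbf{c}=\mathbf{1}$ to obtain the identity $\ehr(\mathscr{P}_I(\mathsf{U}_{k,n}),t)=\ehr(\mathscr{R}_{k,\mathbf{c}'},t)$, where $\mathbf{c}'=(1,\ldots,1,k)\in\mathbb{Z}^{n+1}_{>0}$ consists of $n$ ones followed by a single entry equal to $k$. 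This transports the entire problem to the explicit thin-slice formula of Theorem~\ref{thm:first-formula}, which I would then apply with the ambient dimension equal to $n+1$.

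The core of the argument is a computation of the coefficients $\rho_{\mathbf{c}',j}(v)$. I would split a $j$-subset $I\subseteq[n+1]$ according to whether it contains the last index $n+1$ (the one carrying weight $k$). If $n+1\notin I$, then every chosen entry equals $1$, so $\sum_{i\in I}c'_i=j$, contributing $\binom{n}{j}$ to $\rho_{\mathbf{c}',j}(j)$. If $n+1\in I$, then $\sum_{i\in I}c'_i=k+(j-1)\geq k$ for every $j\geq 1$. The key observation is that in Theorem~\ref{thm:first-formula} the inner sum runs only over $v\in\{0,\ldots,k-1\}$, so this second family of subsets never contributes. Hence, within the relevant range, $\rho_{\mathbf{c}',j}(v)$ equals $\binom{n}{j}$ when $v=j$ and vanishes otherwise. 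Substituting this into the formula of Theorem~\ref{thm:first-formula} with $n$ replaced by $n+1$ collapses the double sum to
\[ \ehr(\mathscr{R}_{k,\mathbf{c}'},t)=\sum_{j=0}^{k-1}(-1)^j\binom{n}{j}\binom{t(k-j)+n-j}{n},\]
which is exactly the claimed expression.

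For the positivity assertion I would simply appeal to Theorem~\ref{thm:ehrhart-positivity}: since $\mathscr{R}_{k,\mathbf{c}'}$ is itself a slice of a prism, all coefficients of its Ehrhart polynomial are positive, and by the identity above the same holds for $\ehr(\mathscr{P}_I(\mathsf{U}_{k,n}),t)$. There is no genuine obstacle in this corollary; the only point requiring care is the bookkeeping of the shifted dimension $n\mapsto n+1$ and the verification that the ``heavy'' subsets containing the last coordinate fall outside the truncated summation range $v\leq k-1$, which is precisely what makes the clean $\binom{n}{j}$ pattern emerge.
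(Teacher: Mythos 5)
Your proposal is correct and follows essentially the same route as the paper: reduce via Proposition~\ref{prop:fat-thin} to the thin slice $\mathscr{R}_{k,\mathbf{c}'}$ with $\mathbf{c}'=(1,\ldots,1,k)$, apply Theorem~\ref{thm:first-formula} in ambient dimension $n+1$, and observe that $\rho_{\mathbf{c}',j}(v)$ collapses to $\binom{n}{j}$ when $v=j$ and to $0$ otherwise. In fact, your explicit verification that subsets containing the heavy index $n+1$ yield sums of at least $k$ and thus fall outside the truncated range $v\leq k-1$ is slightly more careful than the paper's own wording, which asserts the vanishing of $\rho_{\mathbf{c}',j}(v)$ for $v\neq j$ without isolating that case.
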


\begin{proof}
    Since our polytope is integrally equivalent to $\mathscr{R}_{k,\mathbf{c}'}$ for $\mathbf{c}'=(1,\ldots,1,k)\in \mathbb{Z}_{>0}^{n+1}$, using Theorem~\ref{thm:first-formula}, we have
    \[ \ehr(\mathscr{P}_I(\mathsf{U}_{k,n}), t) = \sum_{j=0}^{k-1}(-1)^j \sum_{v=0}^{k-1} \binom{(k-v)t+(n+1)-1-j}{(n+1)-1}\rho_{\mathbf{c},j}(v).\]
    Recalling from equation \eqref{eq:def-rho} the definition of $\rho_{\mathbf{c},j}(v)$, we have that it is the number of ways of summing $v$ using exactly $j$ numbers among the $c_i$'s. However, the only number $v$ that can be obtained as the sum of $j$ values among the $c_i$'s is just $v=j$, in exactly $\binom{n}{j}$ ways. In other words, $\rho_{\mathbf{c},j}(v) = \binom{n}{j}$ if $j=v$ and it is zero otherwise. This yields the result.
\end{proof}

\subsection{Hilbert series of algebras of Veronese type}

The following algebraic structures arise naturally in combinatorics and commutative algebra.

\begin{defi}
    Let $\mathbb{F}$ be a field, and fix a vector $\mathbf{c}=(c_1,\ldots,c_n)\in\mathbb{Z}^n_{>0}$ and a positive integer $k<c_1+\cdots+c_n$. The graded subalgebra of $\mathbb{F}[x_1,\ldots,x_n]$ generated by all the monomials $x_1^{\alpha_1}\cdots x_n^{\alpha_n}$ where $\alpha_1+\cdots+\alpha_n=k$ and $\alpha_i\leq c_i$ for each $1\leq i\leq n$ is said to be of \emph{Veronese type} and is denoted by $\mathscr{V}(\mathbf{c},k)$.
\end{defi}

In \cite{denegri-hibi} De Negri and Hibi studied the algebras of Veronese type and characterized the scenarios in which they are Gorenstein. In particular, in \cite[Corollary 2.2]{denegri-hibi} they prove that these algebras are Cohen-Macaulay and normal. An important step in such proofs consists of showing that there is an isomorphism between $\mathscr{V}(\mathbf{c},k)$ and the Ehrhart ring of $\mathscr{R}_{k,\mathbf{c}}$ (see \cite{bruns-herzog} for detailed definitions on Ehrhart rings of polytopes). In particular, we obtain as a consequence of Theorem~\ref{main} the following result.

\begin{cor}
    The Hilbert function of an algebra of Veronese type $\mathscr{A}$ is a polynomial having positive coefficients.
\end{cor}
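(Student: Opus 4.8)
The plan is to translate the statement into Ehrhart theory and then quote Theorem~\ref{main}. The bridge is the isomorphism, due to De~Negri and Hibi, between $\mathscr{A}=\mathscr{V}(\mathbf{c},k)$ and the Ehrhart ring of $\mathscr{R}_{k,\mathbf{c}}$, recorded just above the statement. First I would use this isomorphism to identify, for every nonnegative integer $m$, the dimension of the degree-$m$ component,
\[ \dim_{\mathbb{F}}(\mathscr{A}^m) = \ehr(\mathscr{R}_{k,\mathbf{c}},m).\]
Because De~Negri and Hibi show that $\mathscr{V}(\mathbf{c},k)$ is normal, its degree-$m$ graded piece is spanned precisely by the monomials corresponding to the lattice points of $m\mathscr{R}_{k,\mathbf{c}}$, so the identity above holds in \emph{all} degrees and not merely asymptotically.

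With this in hand, the Hilbert function $m\mapsto \dim_{\mathbb{F}}(\mathscr{A}^m)$ coincides with the restriction to $\mathbb{Z}_{\geq 0}$ of the Ehrhart polynomial $\ehr(\mathscr{R}_{k,\mathbf{c}},t)$. By Ehrhart's theorem this is a genuine polynomial in $t$ of degree $n-1$ (using the standing hypothesis $0<k<c_1+\cdots+c_n$ built into the definition of Veronese type), so the Hilbert function is indeed polynomial. Theorem~\ref{main} then supplies the coefficients explicitly: for each $0\leq m\leq n-1$,
\[ [t^m]\ehr(\mathscr{R}_{k,\mathbf{c}},t) = \frac{1}{(n-1)!}\sum_{\ell=0}^{k-1} W(\ell,n,m+1,\mathbf{c})\,A(m,k-\ell-1),\]
a nonnegative combination of the weighted-permutation counts $W(\ell,n,m+1,\mathbf{c})$ and the Eulerian numbers $A(m,k-\ell-1)$. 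Each term is nonnegative, and positivity of every coefficient is exactly the content of Theorem~\ref{thm:ehrhart-positivity}, which we may invoke directly.

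The step I expect to require the most attention is the algebraic identification of the Hilbert function with the Ehrhart polynomial in every degree, rather than the positivity itself: one must be sure that normality prevents any ``extra'' generators in higher degrees, so that $\dim_{\mathbb{F}}(\mathscr{A}^m)$ is counted by $m\mathscr{R}_{k,\mathbf{c}}\cap\mathbb{Z}^n$ for each $m$ with no correction terms. Once this is granted the corollary is immediate, since every combinatorial ingredient of the coefficient formula is manifestly nonnegative and Theorem~\ref{main} already guarantees the required positivity.
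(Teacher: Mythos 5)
Your proposal is correct and follows essentially the same route as the paper: identify the Hilbert function of $\mathscr{V}(\mathbf{c},k)$ with the Ehrhart polynomial of $\mathscr{R}_{k,\mathbf{c}}$ via the De~Negri--Hibi isomorphism with the Ehrhart ring, then invoke Theorem~\ref{main} (equivalently Theorem~\ref{thm:ehrhart-positivity}) for positivity of the coefficients. Your extra care about normality guaranteeing the degree-by-degree identification is a reasonable elaboration of what the paper compresses into ``by the discussion above,'' not a different argument.
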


\begin{proof}
    By definition, the Hilbert function of $\mathscr{A}=\mathscr{V}(\mathbf{c},k)$ is the map:
        \[ m \mapsto \dim \mathscr{A}^m. \]
    By the discussion above, the dimension of the graded component $\mathscr{A}^m$ equals $\ehr(\mathscr{R}_{k,\mathbf{c}},m)$, which we know has positive coefficients.
\end{proof}

Notice that Hilbert functions very rarely possess such property. In \cite[p.~1145]{katzman} Katzman referred to the combinatorial descriptions of the $h$-vector of $\mathscr{V}(\mathbf{c},k)$ as ``forbidding''. As a consequence of Theorem~\ref{main-hstar} we now state an interpretation for each of these entries.

\begin{cor}
    The $i$-th entry of the $h$-vector of $\mathscr{V}(\mathbf{c},k)$ counts the number of $\mathbf{c}$-compatible decorated ordered permutations of type $(k,n)$ and winding number $i$.
\end{cor}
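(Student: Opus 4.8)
The plan is to identify the $h$-vector of the algebra $\mathscr{V}(\mathbf{c},k)$ with the $h^*$-vector of the polytope $\mathscr{R}_{k,\mathbf{c}}$, and then invoke Theorem~\ref{main-hstar}. Recall that, by definition, the $h$-vector of a graded algebra $\mathscr{A}$ of Krull dimension $D$ is the sequence of coefficients of the numerator $h(x)$ in the presentation of its Hilbert series as $\operatorname{Hilb}(\mathscr{A},x)=\frac{h(x)}{(1-x)^{D}}$.

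First I would record the grading. Viewing $\mathscr{A}=\mathscr{V}(\mathbf{c},k)$ as a standard graded algebra with its degree-$k$ monomial generators placed in degree $1$, the discussion preceding Corollary~\ref{coro:veronese} gives $\dim_{\mathbb{F}}\mathscr{A}^m=\ehr(\mathscr{R}_{k,\mathbf{c}},m)$ for every $m\geq 0$, via the isomorphism of $\mathscr{A}$ with the Ehrhart ring of $\mathscr{R}_{k,\mathbf{c}}$ established by De~Negri and Hibi. Consequently the Hilbert series of $\mathscr{A}$ coincides with the Ehrhart series of $\mathscr{R}_{k,\mathbf{c}}$, namely
\[ \operatorname{Hilb}(\mathscr{A},x)=\sum_{m=0}^{\infty}\ehr(\mathscr{R}_{k,\mathbf{c}},m)\,x^{m}. \]

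Next I would match the denominators. Since $0<k<c_1+\cdots+c_n$, the polytope $\mathscr{R}_{k,\mathbf{c}}$ has dimension $d=n-1$, so by the very definition of the $h^*$-polynomial the Ehrhart series above equals $\frac{h^*(\mathscr{R}_{k,\mathbf{c}},x)}{(1-x)^{n}}$. The key point to verify is that $n$ is exactly the Krull dimension $D$ of $\mathscr{A}$: this is precisely what the Cohen--Macaulayness and normality of $\mathscr{V}(\mathbf{c},k)$ (again from De~Negri--Hibi) guarantee, ensuring that the numerator is a genuine polynomial and that no reduction in the exponent of $(1-x)$ takes place. Hence $h(x)=h^*(\mathscr{R}_{k,\mathbf{c}},x)$ as polynomials, and therefore the $i$-th entry of the $h$-vector of $\mathscr{A}$ equals $[x^{i}]h^*(\mathscr{R}_{k,\mathbf{c}},x)$.

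Finally I would apply Theorem~\ref{main-hstar}, which identifies $[x^{i}]h^*(\mathscr{R}_{k,\mathbf{c}},x)$ with the number of $\mathbf{c}$-compatible decorated ordered set partitions of type $(k,n)$ and winding number $i$, yielding the statement. The only genuine subtlety --- and thus the main thing to get right --- is the reconciliation of the two meanings of ``$h$-vector'': the algebraic one, coming from a homogeneous system of parameters, and the geometric one, coming from the $h^*$-polynomial. Everything else is bookkeeping, and it is exactly the Cohen--Macaulay property that makes these two notions agree.
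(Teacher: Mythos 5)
Your proposal is correct and follows essentially the same route as the paper: the paper also identifies $\mathscr{V}(\mathbf{c},k)$ with the Ehrhart ring of $\mathscr{R}_{k,\mathbf{c}}$ via De~Negri--Hibi, so that the Hilbert series equals the Ehrhart series $h^*(\mathscr{R}_{k,\mathbf{c}},x)/(1-x)^n$, and then invokes Theorem~\ref{main-hstar}. Your explicit check that the Krull dimension is $n$ and your remark that Cohen--Macaulayness reconciles the h.s.o.p.\ definition of the $h$-vector with the numerator of the Hilbert series are welcome additions of rigor, but they do not change the argument, since the paper takes the $h$-vector to mean precisely that numerator.
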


\section*{Acknowledgements}

The first author wants to thank Katharina Jochemko for valuable suggestions that improved some aspects of this article. He also wants to thank the organizers and participants of the Workshops ``Characteristic Polynomials of Hyperplane Arrangements and Ehrhart Polynomials of Convex Polytopes'' at RIMS Kyoto, and ``Combinatorial and Algebraic Aspects of Lattice Polytopes'' at the Kwansei Gakuin University, both held in February 2023 for several enlightening discussions.

\bibliography{bibliography}
\bibliographystyle{alpha}

\end{document}